% !TEX TS-program = pdflatex
%
%\documentclass[12pt,a4paper,reqno]{amsart} 
\documentclass[11pt,reqno,a4paper]{amsart}
\usepackage{paralist}

\addtolength{\textwidth}{4cm}
\addtolength{\hoffset}{-2cm}
\addtolength{\textheight}{4cm}
\addtolength{\voffset}{-2cm}

\usepackage{amssymb}
\usepackage{enumerate}
\usepackage[all]{xy}

\usepackage{pgf,tikz}
\usetikzlibrary{arrows}

\renewcommand{\ge}{\geqslant}
\renewcommand{\le}{\leqslant}
\renewcommand{\geq}{\geqslant}

\newcommand{\rto}{\dasharrow}

\newcommand{\PP}{\mathbb{P}}
\newcommand{\C}{\mathbb{C}}

\newcommand{\cL}{\mathcal{L}}
\newcommand{\cO}{\mathcal{O}}

\newcommand{\infnear}[1][]{>^{#1}}

\DeclareMathOperator{\ad}{ad}
\DeclareMathOperator{\kod}{kod}
\DeclareMathOperator{\Cr}{Cr}

\newtheorem{theorem}{Theorem}[section]
\newtheorem{lemma}[theorem]{Lemma}
\newtheorem{prop}[theorem]{Proposition}

\newtheorem*{problem*}{Problem}

\theoremstyle{definition}

\newtheorem{remark}[theorem]{Remark}
\newtheorem{example}[theorem]{Example}

\title[Cremona contractibility of unions of lines in the plane]{On Cremona contractibility of \\ unions of  lines in the plane}

\author{Alberto Calabri \and Ciro Ciliberto}

\email{alberto.calabri@unife.it}
\curraddr{Dipartimento di Matematica e Informatica,
Universit\`a degli Studi di Ferrara,
Via Machiavelli 30, 44121 Ferrara, Italy,
phone: +39-0532-974067, fax: +39-0532-974003}

\email{cilibert@mat.uniroma2.it}
\curraddr{Dipartimento di Matematica, Universit\`a degli Studi di Roma ``Tor Vergata'',
Via della Ricerca Scientifica, 00133 Roma, Italy, phone: +39-06-7259-4684,
fax: +39-06-7259-4699}

\thanks{2010 Mathematics Subject Classification: 14H50 (Primary), 14E07, 14N20 (Secondary).}

\keywords{Union of lines, contractible plane curves, log-Kodaira dimension}

\begin{document}

\begin{abstract}
We discuss the concept of Cremona contractible plane curves, with an historical account on the development of this subject. Then we classify Cremona contractible unions of $d\geqslant 12$ lines in the plane.
\end{abstract}

\maketitle

\section{Introduction}

The \emph{Cremona geometry} of the complex projective space $\PP^r$ consists in studying properties of subvarieties of $\PP^ r$ which are invariant under the action of the \emph{Cremona group} $\Cr_r$, i.e., the group of all  birational maps $\PP^r\rto\PP^r$.

Since $\Cr_1\cong {\rm PGL}(2,\C)$, the case $r=1$ reduces to the (non--trivial, but well known and widely studied) theory of invariants of finite sets of points of $\PP^ 1$ under the action of the projective linear group.  The case $r\geqslant 3$ has been very little explored, due to the fact that, among other things, very little is known about the structure of $\Cr_r$. Indeed, in this case we do not even know a reasonable set of generators of $\Cr_r$.  
The intermediate case $r=2$ is more accessible, and in fact it has been an object of study in the course of the last 150 years. The reason is that,  in this case, we have a good amount of 
information about $\Cr_2$. The first one is a famous result by Noether and Castelnuovo to the effect that $\Cr_2$ is generated by ${\rm PGL}(3,\C)$ and the \emph{standard quadratic map} 
\[
\sigma\colon [x,y,z] \in \PP^2\rto  [yz,zx,xy]\in \PP^2.
\]

A classical object of study, from this viewpoint, has been the classification of curves (or, more generally, of linear systems of curves) in $\PP^2$ up to the action of $\Cr_2$. If $\cL$ is a linear system of curves, its dimension is a \emph{Cremona invariant}, i.e., it is the same for all linear systems in the \emph{Cremona orbit} of $\cL$, i.e., the orbit of $\cL$ under the $\Cr_2$--action. 

The degree $d$ of the curves in $\cL$ instead (called the \emph{degree} of $\cL$ and denoted by $\deg(\cL)$), is not a Cremona invariant: for instance, if one applies to $\cL$ a general quadratic transformation (i.e., the composition of $\sigma$ with a general element of  ${\rm PGL}(3,\C)$), the degree of the transformed linear system is $2d$. However, one can define an important \emph{Cremona invariant} related to the degree, i.e., the \emph{Cremona degree} of $\cL$: this is the minimal degree of a linear system in the Cremona orbit of $\cL$. A (not necessarily unique, up to  projective transformations) linear system with minimal Cremona degree is called a \emph{Cremona minimal model}.

If $\cL$ has dimension  0, i.e., it consists of a unique curve $C$, the Cremona degree could be 0: this is the case if $C$ can be contracted to a set of points by a Cremona transformation. In this case one says that $C$ is \emph {Cremona contractible} or simply \emph{Cr--contractible}. If $C$ is Cr--contractible and reducible, it could be contracted to a set of distinct points. However it is easy to see that any finite set of points in $\PP^2$ can be mapped to a single point via a Cremona transformation. Thus, $C$ is Cr--contractible if and only if there is a Cremona transformation which contracts $C$ to a point of the plane.  If $\dim(\cL)\geqslant 1$, then the {Cremona degree} of $\cL$ is positive. 

The Cremona classification of {Cremona minimal models} of linear systems is a very classical subject. 
For example, it is a result which goes back to Noether (though with an incomplete proof) that a pencil of irreducible, rational plane curves is {Cremona equivalent} to the pencil of lines through a fixed point, i.e., pencils of rational plane curves have {Cremona degree} 1.  Similar results for linear systems of positive dimension of curves with positive genus have been classically proved, as we will see in \S \ref {S:history} which is devoted to an historical account on the subject. 

The general problem of classifying {Cremona minimal models} of \emph{irreducible} plane curves or linear systems (a linear system is said to be \emph{irreducible} if so is its general curve) has been open for more than one century, with several interesting contributions by various authors, among them it is worth mentioning  Giuseppe Marletta \cite{Marletta, Marletta2}, who pointed out important properties of adjoint linear systems to such models (see Theorem \ref {thm:marl}; for the definition of adjoint linear systems, see \S \ref {S:notation}). This problem however has been solved only recently in our paper \cite{CC}. 

The first step in this classification can be considered the characterization of Cr-contractible irreducible plane curves.  According to Enriques and Chisini in \cite[vol.~III, \S21, pp.~191--192]{EC}, the first result on this subject appeared in the paper \cite{CE} by Castelnuovo and Enriques in 1900:

\begin{theorem}[Castelnuovo--Enriques] \label{thm:CE}
An irreducible curve $C$ is Cr-contractible if and only if all adjoint linear systems to $C$ vanish.
\end{theorem}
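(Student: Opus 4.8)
The plan is to recast the hypothesis in terms of logarithmic adjoint systems on a resolution and to exploit that their dimensions form a Cremona invariant. Fix a resolution $\pi\colon S\to\PP^2$ of the singularities of $C$, write $\tilde C$ for the (smooth) strict transform, and unwind the definition of the adjoint systems as $|m(K_S+\tilde C)|$, $m\ge 1$; thus the hypothesis that all adjoints vanish reads $h^0(S,m(K_S+\tilde C))=0$ for every $m\ge 1$, i.e.\ $\kod(S,K_S+\tilde C)=-\infty$. The first point I would establish is that this quantity is intrinsic: on a common resolution $W$ of a Cremona map, the strict transform $\hat C$ of $C$ and the class $K_W$ are the same whether read from the source or the target, and passing from one resolution of $C$ to another only blows up points on the smooth locus of the strict transform (or off it), under which $K+\tilde C$ merely pulls back. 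Hence $\kod(S,K_S+\tilde C)$ is a Cremona invariant, and both implications may be tested on any convenient model.

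For the implication ``Cr-contractible $\Rightarrow$ adjoints vanish'' I would argue directly. Suppose a Cremona transformation $\phi$ contracts $C$ to a point and resolve $\phi=\sigma\circ\pi^{-1}$ with $\pi,\sigma\colon W\to\PP^2$ the two morphisms from a smooth surface $W$, chosen so that $\pi$ also resolves $C$. By hypothesis $\sigma(\hat C)$ is a point, so the irreducible curve $\hat C$ is $\sigma$-exceptional; writing $K_W=\sigma^*K_{\PP^2}+\sum_j a_jF_j$ with all discrepancies $a_j\ge 1$ over the $\sigma$-exceptional primes $F_j$, we get $K_W+\hat C=\sigma^*K_{\PP^2}+E$ with $E\ge 0$ supported on the exceptional locus. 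Since $\sigma_*\cO_W(mE)=\cO_{\PP^2}$, the projection formula gives
\[
h^0\bigl(W,m(K_W+\hat C)\bigr)=h^0\bigl(\PP^2,\cO(-3m)\bigr)=0\qquad(m\ge1),
\]
so $\kod(W,K_W+\hat C)=-\infty$, which is exactly the vanishing of all adjoints.

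The converse is the substantial direction. First, the adjunction sequence $0\to\cO_S(K_S)\to\cO_S(K_S+\tilde C)\to\cO_{\tilde C}(K_{\tilde C})\to 0$ together with $h^0(K_S)=h^1(K_S)=0$ on the rational surface $S$ yields $h^0(S,K_S+\tilde C)=h^0(\tilde C,K_{\tilde C})=g$; hence the vanishing of the first adjoint already forces $C$ to be rational. I would then induct on $d=\deg C$. A line is contractible, so assume $d\ge 2$, and aim to produce a quadratic (or de Jonqui\`eres) transformation lowering the degree: choosing three, possibly infinitely near, points of $C$ of multiplicities $\mu_1\ge\mu_2\ge\mu_3$ forming an admissible base, the transformed curve has degree $2d-\mu_1-\mu_2-\mu_3$, which drops as soon as $\mu_1+\mu_2+\mu_3>d$. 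Since the invariant $\kod(S,K_S+\tilde C)=-\infty$ is preserved, the transformed curve again has all adjoints vanishing and smaller degree, and the inductive hypothesis closes the argument.

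The heart of the matter, and the step I expect to resist a quick treatment, is the existence of such a degree-reducing transformation: one must show that the vanishing of all adjoints forces a \emph{Noether-type inequality} $\mu_1+\mu_2+\mu_3>d$ for a suitable admissible triple of (infinitely near) points of $C$. This is precisely where the classical arguments were incomplete, the difficulties being the bookkeeping of infinitely near points through the proximity inequalities and the verification that the chosen triple yields a legitimate Cremona transformation. In modern terms I would translate $\kod(S,K_S+\tilde C)=-\infty$ into the non-pseudoeffectivity of $K_S+\tilde C$ and run the minimal model program for the pair, extracting a pencil of rational curves meeting $C$ in at most one variable point; transforming this pencil into the pencil of lines through a point both contracts $C$ and supplies the required reduction.
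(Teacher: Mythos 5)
The paper itself contains no proof of Theorem \ref{thm:CE}: it is quoted as a classical result (Castelnuovo--Enriques, first complete proof by Ferretti), and the modern route the paper endorses is through the stronger Theorem \ref{thm:KumarMurthy}, namely $(c)\Rightarrow(d)$ trivially and $(d)\Rightarrow(a)$ by Kumar--Murthy, the reverse implications being easy. So your proposal must stand on its own, and it does not: its very first step misstates the hypothesis. By the paper's definition, $\ad_m(C)=f_*(|\tilde C+mK_S|)$, so ``all adjoint linear systems to $C$ vanish'' means $|\tilde C+mK_S|=\emptyset$ for every $m\ge1$, which is condition $(c)$ of the Introduction; you ``unwind'' it instead as $h^0(S,m(K_S+\tilde C))=0$ for every $m\ge1$, i.e.\ $\kod(S,\tilde C)=-\infty$, which is condition $(b)$. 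These are not interchangeable. The implication $(b)\Rightarrow(c)$ is immediate (adding $(m-1)\tilde C$ to a member of $|\tilde C+mK_S|$ produces a member of $|m(\tilde C+K_S)|$), but $(c)\Rightarrow(b)$ is emphatically not a definition-chase: it fails for reducible curves (Example 2 of \cite{CC2}; in this very paper the types in \eqref{d-3a} and \eqref{d-3b} have all $\ad_m$ empty yet $\ad_{2,3}\ne\emptyset$ by Propositions \ref{prop:class2}, \ref{prop:class3} and \ref{prop:class4}), and for irreducible curves it is known only as a corollary of the theorem you are proving, via $(c)\Rightarrow(d)\Rightarrow(a)\Rightarrow(b)$ with Kumar--Murthy in the middle. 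Consequently your ``substantial direction'' argues from a hypothesis strictly stronger than what the theorem grants: at best it would establish $(b)\Rightarrow(a)$, leaving the required implication $(c)\Rightarrow(a)$ untouched. (Your easy direction survives, because there the implications run the harmless way: contractibility gives $\kod(S,\tilde C)=-\infty$, which gives vanishing of every $\ad_m(C)$.)

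Separately, even granting the stronger hypothesis, the hard direction is a program rather than a proof. You correctly identify the degree-lowering step (a Noether-type inequality realized by an admissible triple of possibly infinitely near base points) as the crux, and then replace it by one sentence of MMP. That sentence hides several real theorems: the equivalence of $\kod(S,\tilde C)=-\infty$ with non-pseudoeffectivity of $K_S+\tilde C$ is not formal (it needs Zariski decomposition and log abundance in dimension two); one must run the log MMP keeping track of whether $\tilde C$ itself is contracted along the way; at the Mori fiber space one must check that the fibers meet the image of $\tilde C$ at most once (or, if the output is $\PP^2$, that the image has degree at most $2$, which can then be handled as in Lemma \ref{conic}); and one must invoke Noether--Castelnuovo to realize the transformation of the resulting pencil into the pencil of lines inside $\Cr_2$, finishing with a de Jonqui\`eres contraction as in Lemmas \ref{lem:1} and \ref{lem:2}. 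This is, in essence, how results of Kumar--Murthy type are proved, so the plan is sound; but none of it is executed, and the satellite-point pathology you yourself cite is exactly where the classical quadratic-transformation inductions failed. As it stands, the shortest way to make your write-up correct is to note that $(c)$ trivially implies $(d)$ and to quote Theorem \ref{thm:KumarMurthy} for $(d)\Rightarrow(a)$.
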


Actually, Castelnuovo and Enriques in \cite{CE} claimed that the irreducibility assumption on $C$ can be relaxed to  $C$ being reduced, but Example 2 in \cite{CC2}, namely a general union of $d\ge9$ distinct lines with a point of multiplicity $d-3$, shows that this is not true.

Theorem \ref{thm:CE} is nowadays known as Coolidge's Theorem, because it appeared also in Coolidge's book \cite[p.~398]{Coolidge}, but the proof therein is not complete (see \S \ref{S:history}).
Theorem \ref{thm:CE} has been improved by Kumar and Murthy in 1982, cf.\ \cite{KumarMurthy}:

\begin{theorem}[Kumar--Murthy] \label{thm:KumarMurthy}
An irreducible plane curve $C$ is Cr-contractible if and only if the first two adjoint linear systems to $C$ vanish.
\end{theorem}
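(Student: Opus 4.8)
The plan is to prove only the nontrivial implication. Since Theorem \ref{thm:CE} already characterises Cr-contractibility by the vanishing of \emph{all} adjoint systems, one direction is immediate: if $C$ is Cr-contractible then all its adjoints vanish, in particular the first two. For the converse it suffices, again by Theorem \ref{thm:CE}, to show that the vanishing of the first two adjoints forces the vanishing of all of them. So I would fix an embedded resolution $\pi\colon S\to\PP^2$ of $C$, with $S$ a smooth rational surface and $\hat C$ the (smooth) proper transform of $C$, of genus $g$; the $m$-th adjoint system is $A_m:=|mK_S+\hat C|$, and the hypothesis is $h^0(A_1)=h^0(A_2)=0$. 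The goal is to prove $h^0(A_m)=0$ for every $m\ge1$.

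First I would establish the rationality of $C$. Since $S$ is rational, $\chi(\cO_S)=1$ and $P_m(S)=h^0(mK_S)=0$ for $m\ge1$. Riemann--Roch gives $\chi(\cO_S(K_S+\hat C))=1+\tfrac12(K_S+\hat C)\cdot\hat C=g$ by adjunction, while $h^2(K_S+\hat C)=h^0(-\hat C)=0$; hence $h^0(K_S+\hat C)\ge g$, and the vanishing of the first adjoint forces $g=0$, i.e.\ $\hat C\cong\PP^1$ and $(K_S+\hat C)\cdot\hat C=-2$. Next comes the tame range. Restricting to $\hat C$ and using $h^0(mK_S)=0$, the exact sequence $0\to\cO_S(mK_S)\to\cO_S(mK_S+\hat C)\to\cO_{\hat C}((mK_S+\hat C)|_{\hat C})\to0$ shows $h^0(A_m)\le h^0(\hat C,(mK_S+\hat C)|_{\hat C})$. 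The degree of this restriction is $(mK_S+\hat C)\cdot\hat C=-(2+\hat C^2)m+\hat C^2$, which is $\le-2$ for every $m\ge1$ as soon as $\hat C^2\ge-2$. Thus, whenever $\hat C^2\ge-2$, all adjoints vanish by a one-line induction and $C$ is Cr-contractible; note that here the second adjoint is not even needed.

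The hard case is $\hat C^2\le-3$, and this is where the second adjoint, and the main obstacle, enter. Here the restriction above has positive degree for large $m$, so cohomology alone is insufficient. Feeding $g=0$ into the sequence $0\to\cO_S(2K_S+\hat C)\to\cO_S(2(K_S+\hat C))\to\cO_{\hat C}(2(K_S+\hat C)|_{\hat C})\to0$, whose restriction has degree $2(K_S+\hat C)\cdot\hat C=-4<0$, yields $h^0(2(K_S+\hat C))\le h^0(A_2)=0$. Together with $h^0(K_S+\hat C)=0$ this says that the first two logarithmic plurigenera of the pair $(S,\hat C)$ vanish. I would then invoke the theory of the logarithmic Kodaira dimension of open surfaces (Iitaka, Kawamata, Miyanishi--Tsunoda, Sakai): for the pair $(S,\hat C)$, with $S$ rational and $\hat C$ a smooth rational curve, the vanishing of these plurigenera forces $\kod(K_S+\hat C)=-\infty$, whence a fortiori $h^0(mK_S+\hat C)\le h^0(m(K_S+\hat C))=0$ for all $m$, so that $C$ is Cr-contractible by Theorem \ref{thm:CE}.

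The main obstacle is precisely the borderline value $\kod(K_S+\hat C)=0$: running the logarithmic minimal model program on $(S,\hat C)$, one must exclude that it terminates on a log-minimal pair with $K+\hat C$ nef. This is exactly where the vanishing of the \emph{second} adjoint is indispensable, since every such log-minimal configuration already carries an effective multiple of $K+\hat C$ at level two, contradicting $\bar P_2=0$. In other words, the substance of the theorem is not the cohomological estimates above but the control of this finite list of borderline log surfaces; the reduction to $g=0$ and to $\hat C^2\le-3$ merely isolates the place where the classification is genuinely needed, and the role of the second adjoint is to certify that none of the $\kod=0$ models can occur.
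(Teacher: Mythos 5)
Your preliminary reductions are sound as far as they go: the Riemann--Roch argument showing that $h^0(K_S+\hat C)=0$ forces $g=0$, the disposal of the range $\hat C^2\ge -2$ by restricting $mK_S+\hat C$ to $\hat C$, and the derivation of $\bar P_2(S,\hat C)=0$ from $h^0(2K_S+\hat C)=0$ are all correct. But the proof has a genuine gap, and it sits exactly where you put all the weight: the step ``for $S$ rational and $\hat C$ a smooth irreducible rational curve, $\bar P_1=\bar P_2=0$ forces $\kod(S,\hat C)=-\infty$'' cannot be invoked as a known black box from the theory of open surfaces, because that statement \emph{is} Theorem \ref{thm:KumarMurthy}. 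The paper itself observes in the Introduction that Theorem \ref{thm:KumarMurthy} is equivalent to the log-analogue of Castelnuovo's criterion, namely $\kod(S,\tilde C)=-\infty$ if and only if $P_2(S,\tilde C)=0$; so your argument reduces the theorem to itself. Nor can the invoked statement follow from general log MMP machinery in which the irreducibility of $\hat C$ plays no essential role: Pompilj's example \cite{Pompilj} (cf.\ \cite{CC2}) is a reduced curve with three irreducible components whose first two adjoints vanish and yet $\kod\ge 0$, and the best general results for reducible boundaries (Kojima--Takahashi \cite{KT}) require $P_6$ or even $P_{12}$, not $P_2$. Hence the ``control of the finite list of borderline log surfaces'' that you yourself identify as the substance of the theorem --- ruling out, using only $\bar P_2=0$, every log-minimal pair with $K+\hat C$ nef (not only those of log Kodaira dimension $0$, but also $\kod=1,2$, where effective nonvanishing at level $2$ is likewise not automatic) --- is precisely what is missing from your proposal.

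For calibration: the paper does not prove Theorem \ref{thm:KumarMurthy} at all; it quotes it from \cite{KumarMurthy}, remarking in \S\ref{S:history} that Kumar and Murthy's correct proof proceeds by methods of their own (a study of irreducible curves of negative self-intersection on rational surfaces), distinct from the classical adjunction arguments. So there is no internal proof to compare yours with, and to stand on its own your proposal would have to be self-contained --- which, because of the circular invocation above, it is not.
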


Using the modern language of \emph{pairs} of a curve on a smooth surface, 
one considers the pair $(S,\tilde C)$ where $S\to\PP^2$ is a birational morphism which resolves the singularities of $C$ and $\tilde C$ is the strict transform of $C$ on $S$.

Theorem \ref{thm:KumarMurthy} implies that the pair $(S,\tilde C)$ has log Kodaira dimension $\kod(S,\tilde C)=-\infty$ if and only if its second log plurigenus $P_2(S,\tilde C)$ vanishes (for the definitions, see again \S \ref {S:notation}). This can be seen as a log-analogue of Castelnuovo's rationality criterion for regular surfaces. Thus, for an irreducible plane curve $C$, the following four conditions are equivalent:
\begin{enumerate}[$(a)$]

\item  $C$ is Cr-contractible,

\item  $\kod(S,\tilde C)=-\infty$, 

\item  all adjoint linear systems to $C$ vanish,

\item the first two adjoint linear systems to $C$ vanish.

\end{enumerate}

Condition $(d)$ can be replaced by

\begin{enumerate}[$(d')$]
\item $P_2(S,\tilde C)=0$.
\end{enumerate}

The implications $(a)\Rightarrow(b)\Rightarrow(c)\Rightarrow(d)$ are either trivial or easy, and are true even for reducible and reduced plane curves (see \S \ref{S:notation}), while $(d)\Rightarrow(a)$ follows from Theorem \ref{thm:KumarMurthy}.

As for extensions of Kumar and Murthy's Theorem to reducible curves, the only known result so far is due to Iitaka \cite {Iit}:

\begin{theorem}[Iitaka] \label{thm:Iitaka}
Let $C$ be a reduced plane curve with two irreducible components.
Then, $C$ is Cr-contractible if and only if the first two adjoint linear systems to $C$ vanish.
\end{theorem}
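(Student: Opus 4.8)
The statement asserts an equivalence, but the implication $(a)\Rightarrow(d)$ (contractibility forces the vanishing of the adjoints) holds for every reduced plane curve, as recalled above. So the plan is to prove the converse: assuming that the first two adjoint linear systems of $C=C_1\cup C_2$ vanish, I would produce a Cremona transformation contracting $C$ to a point. I would begin by passing to the logarithmic viewpoint. Since $C$ is a nonzero effective, hence ample, divisor, the complement $U=\PP^2\setminus C$ is a smooth rational \emph{affine} surface, and the hypothesis $(d)$ reads $P_1(S,\tilde C)=P_2(S,\tilde C)=0$. The first step is to upgrade this to condition $(b)$, i.e. $\kod(S,\tilde C)=-\infty$: this is the logarithmic Castelnuovo-type criterion, where one rules out the non-vanishing of any higher log plurigenus. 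Here the control one has over the dual graph of the boundary (the strict transform $\tilde C$ together with the exceptional curves of a log resolution), coming from $C$ having few components, is what keeps this step under control.

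With $\kod(S,\tilde C)=-\infty$ in hand, I would invoke the structure theory of affine surfaces of negative log Kodaira dimension (Miyanishi--Sugie, Kawamata, Fujita): such a $U$ is affine-ruled, i.e.\ it carries an $\mathbb{A}^1$-fibration $U\to B$ over a smooth rational base. I would then extend this fibration to a pencil $\mathcal{P}$ of rational curves on a suitable blow-up of $\PP^2$, whose general member is a rational curve meeting $C$ in a controlled, small number of points. The decisive structural feature is that, because $C$ has at most two components, the boundary is a short tree and the pencil $\mathcal{P}$ can be arranged so that $C$ consists, relative to $\mathcal{P}$, of at most one multisection of low degree together with components contained in fibres.

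Finally, by the Noether--Castelnuovo theorem I would realise $\mathcal{P}$ as the pencil of lines through a point of $\PP^2$ via a Cremona transformation $\varphi$. Under $\varphi$ the members of $\mathcal{P}$ become lines, the fibre components of $C$ are contracted, and the residual multisection part of $C$ is carried to a configuration of lines through the centre of the pencil, a configuration that is visibly Cr-contractible to a single point. Composing the maps contracts $C$ itself, giving $(d)\Rightarrow(a)$.

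The main obstacle is precisely the passage from $\kod(S,\tilde C)=-\infty$ to contractibility, and the bookkeeping that keeps the boundary simple enough for the affine ruling to descend to an honest Cremona contraction. Indeed, the example of a general union of $d\geqslant 9$ lines with a point of multiplicity $d-3$ has $\kod(S,\tilde C)=-\infty$ (all its adjoints vanish) and yet is \emph{not} contractible; thus the implication $(b)\Rightarrow(a)$ genuinely fails once there are many components, and the argument must exploit the hypothesis of at most two components in an essential way, namely in bounding the complexity of the boundary and hence of the resulting affine-ruled structure.
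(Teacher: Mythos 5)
The paper itself offers no proof of this statement --- it is quoted from Iitaka's article \cite{Iit} --- so your proposal must stand on its own, and it breaks at its central step. You propose to apply the Miyanishi--Sugie/Fujita structure theory to $U=\PP^2\setminus C$ once $\kod(S,\tilde C)=-\infty$ is established. But affine-ruledness of $U$ is governed by $\bar\kappa(U)=\kod(S,B)$, where $B$ is the \emph{full} reduced normal-crossings preimage of $C$ in $S$ (the strict transform \emph{plus} the exceptional curves over the singular points of $C$), not by the paper's invariant $\kod(S,\tilde C)$, which involves the strict transform alone. These two invariants genuinely differ, and in the direction fatal to your argument. Concretely, take $C=Q\cup L$ with $Q$ a smooth conic and $L$ a transversal line: on the blow-up $S\to\PP^2$ at the two nodes one has $\tilde C+K_S\sim -E_1-E_2$, so $P_1(S,\tilde C)=P_2(S,\tilde C)=0$ and $\kod(S,\tilde C)=-\infty$, and $C$ is certainly Cr-contractible (a quadratic map based at the two nodes and a general point of $Q$ contracts $L$ and maps $Q$ to a line); yet $K_S+B\sim 0$ for $B=\tilde Q+\tilde L+E_1+E_2$, so $\bar\kappa(U)=0$ and $U$ admits no $\mathbb{A}^1$-fibration whatsoever (affine-ruledness would force $\bar\kappa(U)=-\infty$). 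Thus the pencil you intend to extend and then straighten via Noether--Castelnuovo simply does not exist, already for one of the simplest curves the theorem covers.

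Your closing paragraph also misreads the key counterexample, and consequently locates the role of the two-component hypothesis at the wrong step. The general union of $d\geq 9$ lines with a point of multiplicity $d-3$ is the paper's type $(d;d-3,2^{3(d-2)})$: \emph{all} of its adjoints $\ad_m(C)$ vanish, but $\ad_{2,3}(C)\neq\emptyset$ (Proposition \ref{prop:class3}), hence $P_3(S,\tilde C)>0$ and $\kod(S,\tilde C)\geq 0$. So it is a counterexample to $(c)\Rightarrow(b)$, not --- as you assert --- to $(b)\Rightarrow(a)$: whether $(b)\Rightarrow(a)$ holds for every reduced plane curve is precisely the open Problem posed in the Introduction, and the paper confirms it for unions of $d\geq12$ lines. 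The moral is the opposite of the one you draw: the step that genuinely requires ``at most two components'' is the one you wave through as a ``logarithmic Castelnuovo-type criterion'', namely deducing $\kod(S,\tilde C)=-\infty$ (equivalently, the vanishing of all the $\ad_{n,m}$) from the vanishing of just the first two adjoints. That implication already fails for three components (Pompilj's example, where $(d)$ holds but $(c)$ fails) and for unions of many lines; it is the heart of the matter, not a formality, and your sketch contains no argument for it.
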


By contrast, in \cite{CC2} we noted that $(a)$, $(b)$, $(c)$ and $(d)$ above are not equivalent for reducible, reduced plane curves.
As we said,  Example 2 in \cite{CC2} shows that $(b)$ and $(c)$ are not equivalent for reducible curves.
Furthermore, an example of Pompilj \cite {Pompilj} shows that  $(c)$ and $(d)$ are not equivalent for curves with three irreducible components, cf.\  \cite [Example 1] {CC2}. The same example shows that $(a)$ and $(d)$ are not equivalent for curves with three irreducible components.
Note, moreover, that Pompilj's example is the union of three Cr-contractible irreducible curves which turns out to be non-Cr-contractible and it shows the difficulty of proving the Cr-contractibiilty of reducible curves by proceeding by induction on the number of irreducible components of the curve, as one may be tempted to do. See the historical account in \S\ref{S:history}  for other difficulties encountered by several mathematicians in tackling this problem.

Concerning reducible curves,
the following theorem should also be recalled.

\begin{theorem}[Kojima--Takahashi, \cite {KT}]
Let $(S,D)$ be a pair where $S$ is a smooth rational surface and $D$ is a reduced curve on $S$ with at most four irreducible components. Then, $\kod(S,D)=-\infty$ if and only if $P_6(S,D)=0$.

Furthermore, if $(V,D')$ is the almost minimal model of $(S,D)$ in the sense of \cite[Definition 2.3]{KT}, and if the support of  $D'$ is connected, then $\kod(S,D)=-\infty$ if and only if $P_{12}(S,D)=0$.
\end{theorem}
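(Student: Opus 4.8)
The plan is to prove both equivalences by contraposition, since the implication $\kod(S,D)=-\infty \Rightarrow P_n(S,D)=0$ holds for every $n\ge 1$ directly from the definition of log Kodaira dimension. Thus the entire content lies in showing that $\kod(S,D)\ge 0$ forces $P_6(S,D)>0$ under the four-component hypothesis, and $P_{12}(S,D)>0$ when the almost minimal model has connected boundary support. The first reduction I would carry out is to replace $(S,D)$ by its almost minimal model $(V,D')$: the birational map $S\rto V$ is an isomorphism away from the boundary, so the open surfaces $S\setminus D$ and $V\setminus D'$ coincide and $P_n(S,D)=P_n(V,D')$ for all $n$. Since $V$ is smooth and $D'$ is reduced, $K_V+D'$ is an honest integral divisor, so no rounding is needed to define $P_n(V,D')=h^0\bigl(V,n(K_V+D')\bigr)$; the fractional denominators will instead enter only through the bark, as below.

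On the almost minimal model one has the decomposition $K_V+D'=(K_V+D')^{+}+\mathrm{Bk}(D')$, where $\mathrm{Bk}(D')\ge 0$ is the \emph{bark} of the boundary and $P:=(K_V+D')^{+}$ is nef, with $P\cdot C=0$ for every component $C$ of the support of $\mathrm{Bk}(D')$. By construction $\kod(V,D')=\kod(V,P)$, so the hypothesis $\kod\ge 0$ means precisely that $P$ is a non-zero nef $\mathbb{Q}$-divisor. Because $\mathrm{Bk}(D')$ is effective, any non-zero member of a suitable multiple of $P$ produces a non-zero section of $n(K_V+D')$; the strategy is therefore to bound the least such $n$ by a Riemann--Roch argument on $V$, distinguishing the three numerical regimes $P^2>0$ (so $\kod=2$ and $P$ is big, where sections appear very quickly), $P^2=0$ with $P\not\equiv 0$ (so $\kod=1$ and $V$ carries a fibration along which one multiple of $P$ moves), and $P\equiv 0$ (so $\kod=0$ and some multiple of $P$ is linearly trivial).

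The crucial point, and the source of the explicit constants $6$ and $12$, is to bound the least $n$ for which $n(K_V+D')$ carries a non-zero section, a quantity controlled by the fractional parts of the bark and, in the $\kod=0$ regime, by the torsion order of $P$. These denominators are governed by the discriminants (the ``inductances'') of the admissible rational twigs and forks formed by the components of $D'$. In the setting of at most four components, only finitely many chain- and fork-types can occur, and a direct tabulation of their contributions to $\mathrm{Bk}(D')^2$ and $\mathrm{Bk}(D')\cdot K_V$, fed into the Riemann--Roch estimate $\chi(nL)=1+\tfrac12\bigl(n^2L^2-nL\cdot K_V\bigr)$ for $L=K_V+D'$, shows that $n=6$ always suffices. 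When instead the support of $D'$ is connected, the admissible configurations are of a different and in general longer combinatorial type, and the corresponding tabulation yields the larger universal bound $n=12$.

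I expect the main obstacle to be exactly this bookkeeping over boundary configurations: one must enumerate, up to the peeling relations, the admissible $D'$ compatible with $\kod\ge 0$ in each regime, compute the twig- and fork-discriminants, and verify that the worst case meets but does not exceed the asserted threshold. The delicate subcases will be $\kod=0$ and $\kod=1$, where the desired section is not furnished by bigness but only after clearing the torsion index or after the fibration argument, so that the precise values $6$ and $12$ are genuinely sensitive to the combinatorics of the twigs and forks rather than following from any soft positivity estimate.
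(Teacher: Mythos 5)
First, a point of order: the paper does not prove this statement at all --- it is quoted, with citation, directly from Kojima--Takahashi \cite{KT} as background, so there is no ``paper's own proof'' to compare yours against. What your plan reproduces is, in outline, the strategy of the cited original: pass to the almost minimal model, use the peeling decomposition $K_V+D'=(K_V+D')^{+}+\mathrm{Bk}(D')$ with nef positive part when $\kod\geq 0$, and split into the regimes $\kod=2,1,0$. Judged as a proof, however, the proposal has genuine gaps. The first is the reduction step: almost minimalization is \emph{not} an isomorphism away from the boundary. Peeling contracts $(-1)$-curves $E$ that are not components of $D$, so $S\setminus D$ and $V\setminus D'$ are in general different open surfaces, and the equality $P_n(S,D)=P_n(V,D')$ cannot be deduced the way you state it. The equality is true, but it is itself a lemma of the peeling theory: one must control how each contracted $(-1)$-curve meets the boundary (so that $K_S+D$ equals the pullback of $K_V+D'$ plus an \emph{effective} exceptional divisor, whence pushforward preserves pluri-log-canonical sections); this has to be proved or quoted from Miyanishi or \cite{KT}, not obtained from a false geometric claim.

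The second, more serious, gap is the assertion that with at most four components ``only finitely many chain- and fork-types can occur, and a direct tabulation \dots\ shows that $n=6$ always suffices.'' This is false as stated: a twig with a bounded number of components still has arbitrary self-intersections $\leq -2$, so there are infinitely many numerical types and the denominators of $\mathrm{Bk}(D')$ are unbounded --- already a twig consisting of a single $(-n)$-curve has bark coefficient $1/n$. Hence no finite tabulation bounds the index of $K_V+D'$, and Riemann--Roch cannot substitute for it: writing $L=K_V+D'=P+\mathrm{Bk}(D')$ with $P\cdot\mathrm{Bk}(D')=0$, in the regimes $\kod=0,1$ one has $L^2=P^2+\mathrm{Bk}(D')^2\leq \mathrm{Bk}(D')^2<0$ whenever the bark is nonzero, so $\chi(nL)$ produces no sections there. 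The only thing that limits the possible barks is the hypothesis $\kod(V,D')\geq 0$ itself, made effective through the structure theorems for almost minimal open surfaces of log Kodaira dimension $0$ and $1$ (Fujita, Kawamata, Miyanishi--Tsunoda); for instance, in the $\kod=0$ case one needs $n(K_V+D')^{+}\sim 0$, and since $\mathrm{Pic}$ of a rational surface is torsion-free the issue is exactly bounding the denominator of $(K_V+D')^{+}$ by $6$, resp.\ $12$. That bound \emph{is} the theorem; your plan explicitly defers it as ``bookkeeping,'' so what you have is a correct frame for the argument of \cite{KT}, but not a proof.
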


However, Kojima e Takahashi do not relate  $\kod(S,D)=-\infty$ with contractibility of $D$.

In \cite{CC2} we posed the following:

\begin{problem*}
Is it true that a reduced plane curve $C$ is Cr-contractible if and only if $\kod(S,\tilde C)=-\infty$?
\end{problem*}

In this paper we address this problem when $C$ is a reduced union of lines, the first meaningful case, which,  we think, presents aspects of a general interest.
Our main result is the following (cf.\ Theorem \ref{thm:d>=12} for a more precise statement):

\begin{theorem}
Let $C$ be the union of $d\geqslant 12$ distinct lines. Then, all adjoint linear systems to $C$ vanish if and only if $C$ has a point of multiplicity $m\geq d-3$. Moreover, $\kod(S,\tilde C)=-\infty$ if and only if $C$ has a point of multiplicity $m\geq d-2$.
Finally, $C$ is Cr-contractible if and only if $\kod(S,\tilde C)=-\infty$.
\end{theorem}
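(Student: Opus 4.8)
The plan is to prove the three equivalences by analyzing the pair $(S,\tilde C)$ obtained by blowing up the singular points of the line arrangement $C$. I would set up notation by writing $C = L_1 \cup \dots \cup L_d$, letting $p_1, \dots, p_s$ be the singular points of $C$ with multiplicities $m_1 \geq m_2 \geq \dots \geq m_s \geq 2$, and recording the basic numerical identity $\binom{d}{2} = \sum_i \binom{m_i}{2}$ counting the $\binom{d}{2}$ intersection points. The adjoint linear systems to $C$ are the systems of curves of degree $d + k(\text{appropriate shift})$ passing through each $p_i$ with multiplicity at least $m_i - 1 + (\text{shift})$; concretely the $k$-th adjoint is $|K_S + \tilde C|$ and its iterates, and ``all adjoints vanish'' means these are empty for every $k \geq 1$. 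My strategy for the first equivalence is purely a dimension count: I would compute $\dim |(k+1)(K_{\PP^2}+C) + \text{terms}|$ in terms of the $m_i$ and show, via the combinatorial constraint above, that emptiness for all $k$ forces one large multiplicity point.

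For the first biconditional (all adjoints vanish $\iff$ some $m \geq d-3$), I would argue the ``if'' direction by direct computation: a point of multiplicity $m \geq d - 3$ leaves at most three lines off it, and one checks the adjoint systems are forced to be empty by imposing the multiplicity conditions at that point, which already overwhelm the degree available. For the ``only if'' direction, the heart is showing that if \emph{every} multiplicity satisfies $m_i \leq d - 4$, then some adjoint system is nonempty; here I would use the constraint $\sum_i \binom{m_i}{2} = \binom{d}{2}$ together with the virtual dimension count $\dim |\text{adjoint}| \geq \binom{d-1+k}{2} - \binom{k+1}{2} - \sum_i \binom{m_i - 1 + k}{2}$ (or the analogous exact expression once regularity is established), and show this virtual dimension goes positive for suitable $k$ when no $m_i$ is too large. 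The hypothesis $d \geq 12$ should be exactly what is needed to make this inequality work uniformly, ruling out the sporadic small-$d$ configurations.

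For the second biconditional ($\kod(S,\tilde C) = -\infty \iff$ some $m \geq d-2$), I would again split into two parts. The ``if'' direction: when $m \geq d - 2$, at most two lines avoid the big point, and after blowing up that point the pair becomes easy to analyze --- one exhibits a fibration or a rational pencil showing log-Kodaira dimension $-\infty$, essentially because the complement has a $\mathbb{C}$- or $\mathbb{C}^*$-fibration structure. The ``only if'' direction requires showing $m \leq d - 3$ for all points forces $\kod \geq 0$; by the first equivalence, $m = d-3$ (the borderline) still has all adjoints vanishing but we must show $P_2$ or some higher plurigenus is nonzero so that $\kod \neq -\infty$. This is precisely where the reducible case departs from Kumar--Murthy: I expect to exhibit an explicit nonzero log-pluricanonical section (a product of the defining forms of the lines, suitably weighted) when $m = d - 3$, following the spirit of Example 2 in \cite{CC2}, whose configuration (a point of multiplicity $d-3$) is exactly the borderline case here.

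The third equivalence ($C$ Cr-contractible $\iff \kod(S,\tilde C)=-\infty$) is the deepest and, combined with the above, reduces to showing $C$ is Cr-contractible precisely when some $m \geq d - 2$. The ``only if'' (contractible $\Rightarrow \kod = -\infty$) is the easy, implication-chain direction $(a)\Rightarrow(b)$ valid for all reduced curves as noted in the excerpt. The substantive direction is to \emph{construct} a Cremona transformation contracting $C$ when $m \geq d - 2$: I would use the point of high multiplicity as the base of a quadratic (or more general de Jonqui\`eres-type) transformation centered there, reducing the number of lines and the degree, and then iterate via the Noether--Castelnuovo generation of $\Cr_2$ by quadratic maps. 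I anticipate the main obstacle is this explicit contraction, because Pompilj's example warns that naive induction on the number of components fails; the key will be to choose the quadratic transformations so that the high-multiplicity point is preserved or improved at each step, keeping the configuration within the contractible family rather than leaving it. Controlling this descent --- showing the ``$m \geq d-2$'' property is inherited after each quadratic map until the whole curve collapses --- is where the real work lies.
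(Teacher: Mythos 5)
Your outline of the third equivalence (explicit quadratic and de Jonqui\`eres maps preserving the big multiplicity point, induction on $d$, plus ``contractible $\Rightarrow \kod=-\infty$'' for free) and your treatment of the adjoints of index $k\ge 2$ when $m_0\ge d-3$ (multiplicity exceeds degree, so emptiness is automatic) do match what the paper does. The genuine gap is in the step you yourself call the heart: the claim that if every $m_i\le d-4$ then some adjoint has non-negative \emph{virtual} dimension, deduced from the incidence identity $\sum_i\binom{m_i}{2}=\binom{d}{2}$ alone. That claim is false. Take $d=12$ and the multiplicity sequence consisting of three points of multiplicity $6$ and twenty-one nodes: then $3\binom{6}{2}+21=66=\binom{12}{2}$, every $m_i\le 8=d-4$, and the virtual dimensions of $\ad_k(C)$ for $k=1,2,3,4$ are $-12$, $-3$, $-9$, $-9$, while the degree $12-3k$ is negative for $k\ge5$; so your inequality fails for \emph{every} $k$. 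This particular sequence happens not to be realizable by $12$ lines (three points of multiplicity $6$, pairwise sharing at most one line, already force at least $15$ lines), but ruling out such sequences is precisely the combinatorial content your plan omits: the paper gets the classification from the bound $m_0+m_1+m_2\le d+3$ special to line arrangements, from the structure Lemmas \ref{lem:comb1}--\ref{lem:comb3} describing which lines pass through which of $P_0,P_1,P_2$, and from a case-by-case analysis; none of this is recoverable from the counting identity plus a dimension count.

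There is a second, structural reason the virtual-dimension method fails even on realizable configurations: for near-pencil arrangements the adjoint systems are non-empty because of \emph{highly dependent} conditions, not because of positive virtual dimension. The paper's key Lemma \ref{lem:3} (adjoints vanish $\Rightarrow m_1,m_2>h$, where $d-m_0=2h+\varepsilon$) is proved by exhibiting explicit unions of lines through $P_0$ and $P_1$ inside $\ad_h(C)$: a plane curve of degree $n$ with multiplicities $(n,b)$ at two points exists whenever $b\le n$ (take $b$ copies of the line $P_0P_1$ plus $n-b$ lines through $P_0$), yet its virtual dimension is $n-\binom{b+1}{2}$, which is negative already for $n=4$, $b=3$. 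So the non-emptiness statements that drive the whole classification are invisible to your count; and conversely a negative count proves nothing about emptiness, which you also need at $k=1$ in the ``if'' direction (for $m_0=d-3$ the first adjoint has degree $d-3$ but multiplicity only $d-4$ at $P_0$, so its emptiness is again a Bezout/line-splitting argument, not a degree obstruction). Your remaining two equivalences are in the right spirit and close to the paper: the borderline types with $m_0=d-3$ are handled there by showing $\ad_{2,3}(C)\ne\emptyset$ (Propositions \ref{prop:class2}, \ref{prop:class3}, \ref{prop:class4}), whence $P_3(S,\tilde C)>0$ --- note it must be the \emph{third} log plurigenus, since $P_2=0$ for these types --- and the contractible types are exactly those of Lemmas \ref{lem:1}, \ref{lem:2} and Propositions \ref{prop:class1}, \ref{prop:class2bis}. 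But all of this rests on the classification in the first equivalence, which your proposed tools cannot establish.
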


\emph{A posteriori}, one has that $P_3(S,\tilde C)=0$ implies $\kod(S,\tilde C)=-\infty$ for $C$ a union of $d\geqslant12$ distinct lines with vanishing adjoints.
Moreover, it turns out that, for a union of $d\geq12$ distinct lines,  $(d)$ implies $(c)$. Note that this is not true if $d<12$: for example for the dual configurations of the flexes of a smooth cubic plane curve, which has degree 9, 12 triple points and no other singularity, the first two adjoint linear system vanish, but the third adjoint is trivial, hence non--empty.

The case of a reduced union of $d\leqslant11$ lines is also interesting but the classification of all cases with vanishing adjoints or with Kodaira dimension $-\infty$ is much more complicated, since it requires the analysis of many dozens of configurations.
We performed it  for $d\leqslant8$ and $d=11$,
the remaining cases are work in progress. So far all cases we found with Kodaira dimension $-\infty$ are also Cr-contractible.
We will not present here this long and tedious classification, but we intend to do it in a forthcoming paper.

This paper is organized as follows.
After the historical \S \ref {S:history}, we fix notation and definitions in \S \ref {S:notation}.
In \S \ref {s:adj}, we classify the union of $d\geq12$ distinct lines with vanishing adjoints.
Among them, we determine those with Kodaira dimension $-\infty$ (the latter set is strictly contained in the former) and we show that these are exactly the Cr-contractible ones.

\section{An historical account} \label{S:history}

The history of Theorem \ref{thm:CE} is surprisingly intricate and it is intertwined with the one of Noether--Castelnuovo's Theorem and with the problem of finding Cremona minimal models of plane curves and linear systems.

A short account of some proofs of  Noether--Castelnuovo Theorem can be found in \cite[pp.~227--228]{Alberich}, see also the historical remarks in Chapter 8 of the same book.
For more details on the classical literature, see \cite[pp.\ 390--391]{Hu} and \cite[vol.~III, \S20, pp.~175--177]{EC}.

The study of Cremona transformations $\gamma$ of $\PP^ 2$ is equivalent to the one of homaloidal nets: such a net is the image of the linear system of lines via $\gamma$. The degree of the homaloidal net $\cL$ associated to $\gamma\in \Cr_2$ is called the \emph{degree} of $\gamma$. Cremona transformations of degree $1$ are projective transformations, those of degree 2, the \emph{quadratic transformations}, correspond to homaloidal nets of conics, etc. 

If $\cL$ is an irreducible linear system of plane curves of degree $d$, with base points $P_0,\ldots, P_r$ of multiplicity at least $m_0,\ldots, m_r$, we may assume that $m_0\geqslant \cdots \geqslant m_r$. We will use the notation $(d;m_0,\ldots, m_r)$ to denote $\cL$. We may use exponential notations to denote repeated multiplicities.  For instance the homaloidal nets of conics are of the form $(2; 1^3)$, and the related quadratic transformation is said to be \emph{based} at the three simple base points of this net. 

The so called Noether--Castelnuovo's Theorem was apparently first  stated by Clifford in the fragment  \cite {Cl} of 1869. However Clifford gave no real proof of it, rather he  presented a plausibility argument based on the analysis of Cremona transformations of degree $d\leqslant 8$. 
Immediately after, in 1870,  Noether \cite {Noe} and Rosanes \cite {Ros} independently came up with a more promising approach. They correctly observed that for a homaloidal net $\cL=(d;m_0,\ldots, m_r)$ of degree $d>1$ one has $m_0+m_1+m_2>d$ (this is now called \emph{Noether's inequality}). Then, they observed, if one performs a quadratic transformation based at $P_0,P_1,P_2$, the homaloidal net $\cL$ is transformed in another of degree $d'=2d-(m_0+m_1+m_2)<d$. By repeating this argument, the degree of $\cL$ can be dropped to 1, proving the theorem. 

The problem with this argument is the existence of an \emph{irreducible} net of conics through $P_0,P_1,P_2$. This is certainly the case if $P_0,P_1,P_2$ are distinct, since then they cannot be collinear by 
Noether's inequality. The same argument applies also if $P_1$ is infinitely near to $P_0$ and $P_2$ is distinct, but problems may arise if both $P_1,P_2$ are infinitely near to $P_0$. The first difficulty appears if $P_1,P_2$ are infinitely near to $P_0$ in \emph{different directions}. This was noted by Noether himself, 
who filled up this gap in the paper \cite {Noe2}. After this the proof was considered to be correct and the theorem well established. Afterwords a considerable series of papers appeared, by several authors, like Bertini, Castelnuovo, Guccia, G.~Jung, Martinetti, Del Pezzo, De Franchis, C.~Segre (in chronological order), and others. Based on Noether's argument, they pursued the classification of Cremona minimal models of irreducible linear systems of positive dimension of curves of low genus. 

It was only  in 1901 that C.\ Segre pointed out a more subtle gap in Noether's argument, when:\\
\begin{inparaenum}[$\bullet$]
\item $P_2$ is infinitely near to $P_1$, which in turn is infinitely near to $P_0$, and\\
\item $P_2$ is \emph{satellite} to $P_0$, i.e.\ $P_2$ is proximate also to $P_0$.\\
\end{inparaenum}
In other words, $P_2$ is infinitely near to $P_0$ along a \emph{cuspidal} branch. Segre's criticism seemed to be a very serious one, since he presented a series of homaloidal nets of increasing degrees, whose degree cannot be lower by using quadratic transformations.  

According to Coolidge in \cite[p.~447]{Coolidge}, ``it is said that Noether shed tears when he heard of this'',
but, as Coolidge goes on, ``there was no need to do so''.
Indeed, promptly after Segre's criticism, in the same year 1901, Castelnuovo showed in \cite{Castelnuovo} how to decompose a non-linear Cremona transformations as a composition of  \emph{de Jonqui\`eres maps} (related to homaloidal nets of the type $(d; d-1, 1^ {2(d-1)})$, and the de Jonqui\`eres map will be said to be \emph{centered} at the base points of the net), which, in turn, decompose in products of quadratic ones, as showed by Segre in a footnote to \cite{Castelnuovo}. Just one year later, Ferretti, a student of Castelnuovo's, filled up in \cite{Ferretti} the gap also in aforementioned  papers about the classification of linear systems of low genus.
It turned out that, even if the proofs were incomplete, all statements were correct.

Castelnuovo's proof really contains a new idea: it is based on the remark that, if $\cL$ is a positive dimensional linear system of rational plane curves, then all adjoint linear systems to $\cL$ vanish. It is this property that ultimately implies that $\cL$ has base points of large enough multiplicity so that the degree of  $\cL$ can be decreased by means of de Jonqui\`eres transformations.
This idea, according to Castelnuovo himself, came from the joint work \cite{CE} with Enriques of the year before, concerning rational and ruled \emph{double planes}, i.e., rational and ruled  double coverings of $\PP^2$.
Indeed, Castelnuovo ed Enriques stated  in \cite{CE}  that a double plane is rational or ruled if and only if all the adjoint linear systems of index $i\ge2$ to the branch curve of (the canonical desingularization of) a double plane vanish (see \cite {Cal1, Cal} for a more precise statement). 

In the last page of \cite{CE}, Castelnuovo ed Enriques stated Theorem \ref{thm:CE} (with the wrong assumption that $C$ can be reducible): they do not really give a proof, they simply claim that it consists in computations similar to others done in that paper.
In addition, they remarked that the same technique could be useful in the classification of linear systems of plane curves with low genus, as Castelnuovo and Ferretti effectively did.

Note however that Segre's criticism applied also to the classification in \cite{CE}, as Castelnuovo admitted in the first page of \cite{Castelnuovo}.
Even if Castelnuovo suggested in \cite{Castelnuovo} that the gap could be fixed by arguments similar to those in \cite{Castelnuovo}, it seems that nobody did that, until Conforto in 1938 in \cite{Conforto}, cf.\ \cite[p.~458]{EConforto}.
It turned out only recently that Castelnuovo--Enriques--Conforto proof of the characterization of rational double planes still had a gap, that has been fixed in \cite{Cal}.

Coming back to Theorem \ref{thm:CE}, its first correct proof is due to Ferretti in \cite{Ferretti}. This is essentially exposed in Enriques--Chisini's book  \cite[vol.~III, \S21, pp.~187--190]{EC}. However, at p.~190, at the end of the proof of Theorem \ref{thm:CE} (with the correct statement), Enriques and Chisini insisted on the wrong statement that the irreducibility assumption on $C$ can be weakened to reducedness. A possible explanation for this mistake may reside in the fact that the numerical properties of the multiplicities of the curve $C$ (essentially Noether's inequality) stay the same even if it is reducible. However the condition that three points $P_0,P_1,P_2$ of the highest multiplicities are not aligned if $m_0+m_1+m_2>d$, where $d=\deg(C)$, may fail for reducible curves. Moreover, even if one can apply a Cremona transformation decreasing $d$, one or more components of $C$ could be contracted to points: in that case, if one then applies another Cremona transformation based at those points, such components reappear causing the argument to become circular.
The same considerations suggest that one cannot simply proceed by induction on the number of components of $C$.

Few years after Ferretti's work, in 1907, Marletta  gave in \cite{Marletta} a similar proof of Theorem \ref{thm:CE}, by showing the following:

\begin{theorem}[Marletta]\label{thm:marl} 
A curve of Cremona minimal degree $d>1$, with the point of maximal multiplicity $m_0>d/3$, has non-vanishing adjoint linear system of index $i$, with $i=[(d-m_0)/2]$, where $[x]$ denotes the largest integer smaller than or equal to $x$.
\end{theorem}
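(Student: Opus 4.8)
The plan is to translate the non-vanishing of the adjoint system of index $i$ into the existence of an explicit effective divisor on a resolution, and then to build one by hand. Let $\pi\colon S\to\PP^2$ resolve the singularities of $C$, let $P_0,P_1,\dots$ be the (possibly infinitely near) points blown up, with $E_j$ the corresponding total transforms and $H$ the class of a line, and let $m_0\ge m_1\ge\cdots$ be the multiplicities of $C$ at these points. Then $\tilde C\sim d\,\pi^*H-\sum_j m_jE_j$ and $K_S\sim-3\,\pi^*H+\sum_jE_j$, so that $K_S+\tilde C\sim(d-3)\pi^*H-\sum_j(m_j-1)E_j$; hence the adjoint of index $i$ is the system of plane curves of degree $i(d-3)$ with multiplicity $\ge i(m_j-1)$ at each $P_j$, and its non-vanishing amounts to $h^0\bigl(i(K_S+\tilde C)\bigr)>0$. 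Note first that $m_0>d/3$ forces $i=[(d-m_0)/2]<d/3$, so $d-3i>0$; we may assume $i\ge1$, the case $i=0$ being trivial.

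The decisive reduction is the linear equivalence
\[
i(K_S+\tilde C)\sim(i-1)\,\tilde C+\Gamma',\qquad \Gamma'\sim(d-3i)\,\pi^*H-\sum_j(m_j-i)E_j,
\]
which one checks by a direct computation of coefficients. As $(i-1)\tilde C$ is effective, it suffices to produce an effective $\Gamma'$: a plane curve of degree $d-3i$ with multiplicity $\ge m_j-i$ at each $P_j$, a condition that is vacuous whenever $m_j\le i$. To control which points actually impose a condition I would invoke Cremona minimality: a curve of minimal degree $d>1$ admits no degree-lowering quadratic transformation, and since the quadratic transformation based at $P_0,P_1,P_2$ changes the degree to $2d-(m_0+m_1+m_2)$, minimality forces $m_0+m_1+m_2\le d$. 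Then $m_0+2m_2\le m_0+m_1+m_2\le d$ yields $m_2\le(d-m_0)/2$, hence $m_2\le i$; thus only $P_0$ and possibly $P_1$ impose a genuine condition on $\Gamma'$.

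It remains to exhibit $\Gamma'$. If $m_1\le i$, I take $m_0-i$ general lines through $P_0$ and complete with general lines up to degree $d-3i$; this is possible because $m_0-i\le d-3i$ is equivalent to $2i\le d-m_0$, which holds by definition of $i$. If instead $m_1>i$, I exploit the line $L=\overline{P_0P_1}$: counted with multiplicity $m_1-i$ it contributes multiplicity $m_1-i$ at \emph{both} points while costing degree $m_1-i$ only, and adding $m_0-m_1$ further lines through $P_0$ raises the multiplicity there to $m_0-i$. In either case the total degree is $m_0-i\le d-3i$, and padding with general lines yields the desired $\Gamma'$. Therefore $|i(K_S+\tilde C)|\ne\emptyset$, which is the assertion.

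The step I expect to be the main obstacle is the passage from minimality to the inequality $m_0+m_1+m_2\le d$: when two of the three points of largest multiplicity are infinitely near $P_0$ there may be no irreducible conic through them, so the naive quadratic reduction is unavailable and one must instead appeal to the finer de Jonqui\`eres reductions --- precisely the delicate phenomenon recounted in \S\ref{S:history}. A subsidiary technical point is the construction of $\Gamma'$ when $P_1$ is infinitely near $P_0$, where the role of $\overline{P_0P_1}$ is taken by a smooth branch through $P_0$ in the direction of $P_1$, counted with multiplicity $m_1-i$, and one must verify that it attains the prescribed multiplicity at the infinitely near point.
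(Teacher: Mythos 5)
The paper itself gives no proof of Theorem \ref{thm:marl}: it is stated as a historical result, with the proof residing in Marletta's 1907 paper \cite{Marletta}, so your attempt can only be judged on its own merits. Before the main point, one remark on definitions: in this paper the adjoint system of index $i$ is $\ad_i(C)=f_*(|\tilde C+iK_S|)$, i.e.\ plane curves of degree $d-3i$ with multiplicity $\geq m_j-i$ at the $P_j$, not the log-pluricanonical system $|i(K_S+\tilde C)|$ you set out to make non-empty. This slip is harmless: your divisor $\Gamma'$ lies exactly in $|\tilde C+iK_S|$, so exhibiting it would prove the statement in the paper's sense, and your reduction $i(K_S+\tilde C)\sim(i-1)\tilde C+\Gamma'$ is merely superfluous.

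The genuine gap is the step you yourself flag as ``the main obstacle'', and it is fatal rather than technical: Cremona minimality does \emph{not} imply $m_0+m_1+m_2\le d$. What you are invoking is the converse of Jung's Theorem \ref{thm:Jung}, and that converse is false. The paper says so explicitly right after Theorem \ref{thm:Jung}: there exist curves of Cremona minimal degree with $m_0+m_1+m_2>d$; for such curves $P_1$ is infinitely near to $P_0$ and $P_2$ is either infinitely near to $P_0$ in another direction or satellite to $P_0$, with $m_0>d/2$, and Marletta himself gave sufficient conditions producing minimal curves with $d<m_0+m_1+m_2$. The obstruction is precisely Segre's objection recounted in \S\ref{S:history}: in the satellite configuration no quadratic transformation based at $P_0,P_1,P_2$ exists, so Noether's inequality cannot be converted into a degree-lowering quadratic map, and no amount of tinkering with that reduction will restore your inequality. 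As a consequence, what your construction actually proves is only this: if $m_2\le i$, then $\ad_i(C)\ne\emptyset$ --- a statement in which minimality plays no role at all (it is the same lines-through-$P_0$ and line-$\overline{P_0P_1}$ construction used in part (ii) of Lemma \ref{lem:3}). The entire content of Marletta's theorem lies in the complementary case $m_2>i$, which forces $m_0+m_1+m_2\ge d+1$: there minimality must be exploited in an essential way (classically, via de Jonqui\`eres rather than quadratic transformations) to rule out the vanishing of $\ad_i(C)$ for the infinitely-near and satellite configurations. Your proposal acknowledges this case but leaves it untouched, so the proof is missing exactly the case the theorem was devised to handle.
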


Also Ferretti in \cite{Ferretti} had given similar interesting results regarding the adjoint linear systems to Cremona minimal models.

Though Theorem \ref{thm:CE} is nowadays called Coolidge's Theorem, its proof  in Coolidge's book \cite[pp.~396--398]{Coolidge} contains the same gap pointed out by Segre for Noether's argument. 
It is strange how careless Coolidge has been in references:  the only one he gives is to a paper of Franciosi of 1918. It is also very strange that 
Coolidge's wrong proof has been repeated \emph{verbatim} in the paper \cite{KumarMurthy} by Kumar and Murthy, who however gave a correct proof of Theorem \ref{thm:KumarMurthy} with different methods.

Regarding the minimal degree problem, it seems that Giuseppe Jung in 1889 (see \cite{Jung2}) has been the first one who proved the:

\begin{theorem}[G.~Jung] \label{thm:Jung}
If an irreducible curve $C$ has degree $d$ and maximal multiplicities $m_0\ge m_1 \ge m_2$ with $d\ge m_0+m_1+m_2$, then $C$ has minimal degree.
\end{theorem}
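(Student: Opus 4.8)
The plan is to recast the statement in terms of the Picard lattice of a blow-up and the action of the Cremona group on it, so that the degree becomes a linear functional evaluated along an orbit. Concretely, I would take a birational morphism $\pi\colon S\to\PP^2$ that resolves the singularities of $C$ (and which I am free to dominate further so as to also resolve the base points of any competing Cremona map), giving $\operatorname{Pic}(S)=\mathbb{Z}H\oplus\bigoplus_i\mathbb{Z}E_i$ with intersection form $\operatorname{diag}(1,-1,-1,\dots)$ and $K_S=-3H+\sum_iE_i$. The strict transform class is $\gamma=[\tilde C]=dH-\sum_i m_iE_i$, and one checks $\gamma\cdot H=d$ and $\gamma\cdot E_i=m_i$. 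By the Noether--Castelnuovo theorem, every $\phi\in\Cr_2$ induces an isometry of this lattice fixing $K_S$, lying in the Weyl group $W$ generated by the permutations of the $E_i$ and the reflection $s_0$ in $\alpha_0:=H-E_1-E_2-E_3$ (the standard quadratic map $\sigma$). The image line class $H'=\phi^*(\text{line})$ then ranges over the orbit $W\cdot H$, and $\deg(\phi(C))=\gamma\cdot H'$. Here is the one place irreducibility enters: since I will prove $\gamma\cdot H'\geqslant d\geqslant 1>0$, the curve $C$ is never contracted, so $\gamma\cdot H'$ really is the degree of the image; for reducible curves a component could be contracted, which is exactly why the theorem fails in that generality.

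Next I would translate the hypothesis into a positivity (dominance) statement. Taking as simple roots $\alpha_0=H-E_1-E_2-E_3$ together with $\alpha_i=E_i-E_{i+1}$ for $i\geqslant 1$, one computes
\[
\gamma\cdot\alpha_i=m_i-m_{i+1},\qquad \gamma\cdot\alpha_0=d-m_1-m_2-m_3 .
\]
After relabelling the points so that $m_1\geqslant m_2\geqslant\cdots$ (a reordering which is itself realised by $W$), the inequalities $\gamma\cdot\alpha_i\geqslant 0$ hold automatically, and the remaining condition $\gamma\cdot\alpha_0\geqslant 0$ is precisely the hypothesis $d\geqslant m_0+m_1+m_2$. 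Thus $d\geqslant m_0+m_1+m_2$ is equivalent to $\gamma$ being a dominant vector, i.e. $\gamma\cdot\alpha\geqslant 0$ for every simple root $\alpha$.

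The heart of the argument is then a minimisation over the orbit. The vector $H$ is itself dominant, and the key fact I would invoke is the standard orbit lemma for Weyl groups of Kac--Moody type: for a dominant $H$ and any $w\in W$, the difference $wH-H$ is a non-negative integral combination of the simple roots $\alpha_i$ (the explicit case $s_0H-H=\alpha_0$ already illustrates the direction, the form being negative on roots). Pairing this with the dominant class $\gamma$ gives $\gamma\cdot(wH-H)\geqslant 0$, hence
\[
\deg(\phi(C))=\gamma\cdot H'\geqslant\gamma\cdot H=d
\]
for every $\phi\in\Cr_2$. Since the identity attains the value $d$, the Cremona degree of $C$ equals $d$, i.e. $C$ has minimal degree.

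I expect the main obstacle to be precisely this last minimisation step: one must know that the dominant representative minimises the linear functional $\gamma\cdot(-)$ over the entire (and, once nine or more points occur, infinite) orbit $W\cdot H$, rather than merely among its immediate neighbours. This is also what makes the proof \emph{immune} to Segre's classical gap: instead of trying to lower the degree by an explicit chain of genuine quadratic transformations — where one may be unable to find an irreducible net of conics through three infinitely near points — I prove a lower bound against \emph{all} lattice isometries, a superset of the genuine Cremona maps, so the non-existence of certain geometric transformations can only help. By contrast, the naive estimate from a single quadratic map, $\deg(\tau(C))=2d-(n_0+n_1+n_2)\geqslant 2d-(m_0+m_1+m_2)\geqslant d$, rules out a one-step decrease but says nothing about compositions that first raise and then lower the degree; it is exactly this difficulty that the orbit argument resolves.
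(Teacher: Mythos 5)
Your argument is correct, but it follows a genuinely different route from the paper's. The paper does not prove Theorem \ref{thm:Jung} by a lattice argument at all: it defers to \cite[Theorem 2.3]{CC}, a short proof based on adjoint linear systems, i.e.\ on the Cremona invariance of the adjoint sequences (Kantor's Lemma \ref{lem:Kantor}) --- the same toolkit the rest of the paper runs on, and the reason the statement extends \emph{mutatis mutandis} to linear systems. Your proof instead turns the Cremona degree into the minimum of the linear functional $\gamma\cdot(-)$ over the $W$-orbit of $H$ in $\operatorname{Pic}(S)$, translates the hypothesis $d\geq m_0+m_1+m_2$ into dominance of $\gamma$ (your computation $\gamma\cdot\alpha_0=d-m_1-m_2-m_3$, $\gamma\cdot\alpha_i=m_i-m_{i+1}$ is right, and sorting is realised by the $\alpha_i$, $i\geq1$), and concludes by the orbit lemma $wH-H\in\sum_i\mathbb{Z}_{\geq0}\alpha_i$, which is indeed valid in the hyperbolic ($n\geq10$ points) Kac--Moody setting by the usual induction on length, since $w'\alpha_i$ is a positive root when $\ell(w's_i)>\ell(w')$. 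Two remarks on what each approach buys. First, the one step you state too quickly is the claim that every $\phi\in\Cr_2$ acts on the marked lattice through $W$: deducing this from Noether--Castelnuovo requires nontrivial bookkeeping of markings on a common resolution when base points are infinitely near (composing quadratic maps changes the exceptional basis), and the clean statement is the classical theorem of Kantor--Du Val--Nagata that any two geometric markings of a rational surface differ by a Weyl element; you should cite that (it is covered, e.g., in \cite{Alberich}) rather than Noether--Castelnuovo alone --- with that citation your proof is complete. Second, your diagnosis of where irreducibility enters (the bound $\gamma\cdot H'\geq d>0$ shows $\tilde C$ is never contracted, whereas components of reducible curves can be, which is exactly the failure mode the paper's \S\ref{S:history} describes) and of why the argument is immune to Segre's gap is accurate: like the adjoint-system proof, you bound the degree against \emph{all} lattice isometries at once instead of running a descent by explicit quadratic maps, so the possible non-existence of an irreducible net of conics through three infinitely near points never arises. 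The trade-off is that your route imports two external theorems (the marking theorem and the orbit lemma), while the paper's reference proof stays inside the Cremona-invariance formalism it has already set up; both handle the boundary case $d=m_0+m_1+m_2$ uniformly, consistent with the paper's remark that Coolidge's argument under the strict inequality in fact covers equality as well.
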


The same statement holds \emph{mutatis mutandis} for irreducible linear systems of plane curves (see \cite[Theorem 2.3]{CC} for a short proof which uses adjoint linear systems).
Theorem \ref{thm:Jung} has been stated by Coolidge in \cite[p.~403]{Coolidge} with the weaker hypothesis $d>m_0+m_1+m_2$, but the proof therein works also in case $d=m_0+m_1+m_2$.
 
If an irreducible curve $C$ has Cremona minimal degree $d$ and maximal multiplicities $m_0 \ge m_1 \ge m_2$ with $m_0+m_1+m_2>d$, one sees that the corresponding points $P_0,P_1,P_2$ are infinitely near, namely 
$P_0\in\PP^2$, $P_1$ is infinitely near to $P_0$ and either\\
\begin{inparaenum}[$\bullet$]
\item $P_2$ is infinitely near to $P_0$ (in a different direction with respect to $P_1$), or\\
\item $P_2$ is infinitely near to $P_1$ and $P_2$ is satellite to $P_0$.\\
\end{inparaenum}
In both cases, it follows that $m_0>d/2$.

Marletta in \cite{Marletta} gave sufficient conditions on the multiplicities of the singular points of a curve $C$ in order that $C$ has Cremona minimal degree with $d<m_0+m_1+m_2$.

A key ingredient in the study of (linear systems of) plane curves has been the concept of adjoint linear systems.
According to Enriques and Chisini in \cite[p.~191]{EC}, they have been originally introduced by Brill and Noether in 1873 for the study of linear series on curves (see \cite {BN}).  Their invariance with respect to Cremona transformations has been firstly used by Kantor in 1883 (but published only in 1891, see \cite  {Ka}) and then by Castelnuovo in 1891 (see \cite {Cas2}).

\section{Preliminaries and notation} \label{S:notation}

\subsection{Adjoint linear systems} Let $C$ be a reduced plane curve.
Let $f\colon S\to\PP^2$ be a birational morphism which resolves
the singularities of $C$ and denote by $\tilde C$ the strict transform of $C$ on $S$.
For any pair of integers $n\ge 1$ and $m\ge n$, we set
\begin{equation*}\label{eq:adm}
\ad_{n,m}(C)=f_*(|n\tilde C+mK_S|).
\end{equation*}
and 
\[
\ad_{m}(C):=\ad_{1,m}(C),\quad \text{so that}\quad \ad_{n,m}(C)=\ad_{m}(nC).
\]
We call $\ad_{n,m}(C)$ the  \emph{$(n,m)$--adjoint linear system to $C$}, and $\ad_{m}(C)$ simply 
the \emph{$m$--adjoint linear system to $C$}, or adjoint linear system \emph{of index $m$} to $C$.

\begin{remark} The reason why we assume $m\ge n$ is that the definition  of $(n,m)$--adjoint systems is independent of the morphism $f$ only in this case (the reader will easily check this).

Moreover, $\ad_{n,m}(C)=f_*(|n\tilde C'+mK_S|)$, with $\tilde C'=\tilde C+D$, where $D$ is supported  on the exceptional divisors of $f$. 
\end{remark}

A basic role in Cremona geometry  is played by the \emph{$n$--adjoint sequence} $\lbrace \dim(\ad_{n,m}(C))\rbrace_{m\ge n}$ of $C$ (simply called the \emph{adjoint sequence} if $n=1$). A crucial remark is that \emph{adjunction extinguishes}, i.e., $\ad_{n,m}(C)=\emptyset$ for $m\gg 0$, hence adjoint sequences stabilize to $-1$. Therefore we can consider them as finite sequences, ending with the first $-1$ after which it stabilizes. From the results in \cite  {CC} it follows that the adjoint sequence stabilizes to $-1$ as soon as it reaches the value $-1$. 

Let $\gamma\in \Cr_2$ and let $C$ and $C'$ be reduced curves in $\PP^ 2$. We say that \emph{$\gamma$ maps $C$ to $C'$}, if there is a commutative diagram 
\begin{equation}\label{eq:diag}
\xymatrix{
& \tilde S \ar[rd]^\beta \ar[ld]_\alpha \\
\PP^ 2 \ar@{-->}[rr]_\gamma & & \PP^2
}
\end{equation}
with $\alpha, \beta$ birational morphisms, and there is a smooth curve $\tilde C$ on $\tilde S$ such that 
\begin{equation}\label{eq:tr}
\alpha_*(\tilde C)=C, \quad \text{and}\quad \beta_*(\tilde C)=C'. 
\end{equation}

Note that $\gamma$ [resp.\ its inverse] may contract some components of $C$ [resp.\ of $C'$] to points. In particular, $C'$ could be the zero curve, in which case $C$ is said to be  \emph{Cremona contractible} or \emph{Cr--contractible}.

As recalled in \S \ref {S:history},  
the following lemma is basically due to S. Kantor:

\begin {lemma}[Kantor] \label{lem:Kantor} If $C$ is a reduced plane curve, for all integers $n\ge 1$ the $n$--adjoint sequence of $C$ is a Cremona invariant.
\end{lemma}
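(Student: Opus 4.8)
The plan is to reduce the entire statement to two facts: first, that each term $\dim\ad_{n,m}(C)$ can be computed on \emph{any} resolution of $C$ and is independent of the choice, and second, that a Cremona transformation relating $C$ to $C'$ supplies a single smooth surface resolving both curves simultaneously. The starting observation is the identity $\dim\ad_{n,m}(C)=h^0(S,\cO_S(n\tilde C+mK_S))-1$ for a resolution $f\colon S\to\PP^2$. This is immediate from $\ad_{n,m}(C)=f_*|n\tilde C+mK_S|$, since global sections are insensitive to pushforward, $H^0(S,\cO_S(n\tilde C+mK_S))=H^0(\PP^2,f_*\cO_S(n\tilde C+mK_S))$, so the two projectivizations have equal dimension regardless of whether the exceptional coefficients of $n\tilde C+mK_S$ are positive or negative.

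The key step, and the place where the hypothesis $m\ge n$ enters decisively, is the invariance of this number under enlarging the resolution. I would let $\pi\colon S'\to S$ be the blow-up of a point $p$ with exceptional curve $E$, and let $\tilde C'$ be the strict transform of $\tilde C$. Since $S$ already resolves $C$, the curve $\tilde C$ is smooth and so has multiplicity $\mu\in\{0,1\}$ at $p$; using $\pi^*\tilde C=\tilde C'+\mu E$ and $K_{S'}=\pi^*K_S+E$ one computes
\[
n\tilde C'+mK_{S'}=\pi^*(n\tilde C+mK_S)+(m-n\mu)E.
\]
Because $\mu\le 1$ and $m\ge n$, the exceptional coefficient satisfies $m-n\mu\ge m-n\ge 0$, hence $\pi_*\cO_{S'}((m-n\mu)E)=\cO_S$ and the projection formula yields $h^0(S',n\tilde C'+mK_{S'})=h^0(S,n\tilde C+mK_S)$. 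Since any two resolutions of $C$ are dominated by a common one, and the dominating morphisms factor into blow-ups at points where the (smooth) strict transform of $C$ still has multiplicity at most $1$, iterating the computation shows that $h^0(S,n\tilde C+mK_S)$, and therefore each term of the $n$-adjoint sequence, is independent of the resolution. I expect this blow-up computation to be the main point: everything else is formal, and it is exactly here that the standing assumption $m\ge n$ is needed, which also accounts for its appearance in the definition of $\ad_{n,m}$.

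Finally I would invoke the diagram \eqref{eq:diag}. A Cremona transformation $\gamma$ mapping $C$ to $C'$ provides a single smooth surface $\tilde S$, two birational morphisms $\alpha,\beta\colon\tilde S\to\PP^2$, and one smooth curve $\tilde C$ on $\tilde S$ with $\alpha_*\tilde C=C$ and $\beta_*\tilde C=C'$. Thus $(\tilde S,\alpha)$ resolves $C$ and $(\tilde S,\beta)$ resolves $C'$, so by the previous steps
\[
\dim\ad_{n,m}(C)=h^0(\tilde S,n\tilde C+mK_{\tilde S})-1=\dim\ad_{n,m}(C'),
\]
the middle term being intrinsic to the abstract surface $\tilde S$, since neither the canonical class $K_{\tilde S}$ nor the divisor $\tilde C$ remembers which morphism to $\PP^2$ was used. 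Letting $m$ range over all $m\ge n$ shows that the two $n$-adjoint sequences coincide, which is the assertion. The only delicate point to double-check along the way is that the strict transform of $C$ indeed remains smooth (multiplicity $\le 1$) at every center blown up when comparing resolutions, so that the inequality $m-n\mu\ge 0$ never fails; this holds because blowing up a point of a smooth curve preserves its smoothness.
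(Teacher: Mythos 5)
Your blow-up computation is correct, and it carefully settles the point the paper relegates to the reader (that for $m\ge n$ the quantity $h^0(S,\cO_S(n\tilde C+mK_S))$ does not depend on the chosen resolution). The gap is in your final step: you identify the smooth curve $\tilde C$ of diagram \eqref{eq:diag} with the strict transform of $C$ under $\alpha$ and, simultaneously, with the strict transform of $C'$ under $\beta$. The definition of ``$\gamma$ maps $C$ to $C'$'' only requires $\alpha_*(\tilde C)=C$ and $\beta_*(\tilde C)=C'$, so in general $\tilde C=\tilde C_\alpha+D_\alpha$, where $\tilde C_\alpha$ is the strict transform of $C$ and $D_\alpha$ is an effective divisor contracted by $\alpha$ (its components are the strict transforms of the components of $C'$ contracted by $\gamma^{-1}$, plus possibly curves contracted by both morphisms), and likewise $\tilde C=\tilde C_\beta+D_\beta$. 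These discrepancies are nonzero exactly when $\gamma$ or $\gamma^{-1}$ contracts components of the curves --- which is the situation this paper cares about: in the extreme case where $C'$ is the zero curve (Cr-contractibility itself) one has $D_\beta=\tilde C$. Your steps 1--2 give $\dim\ad_{n,m}(C)=h^0(\tilde S,n\tilde C_\alpha+mK_{\tilde S})-1$ and $\dim\ad_{n,m}(C')=h^0(\tilde S,n\tilde C_\beta+mK_{\tilde S})-1$, so the chain of equalities you write through $h^0(\tilde S,n\tilde C+mK_{\tilde S})$ needs in addition
\[
h^0\bigl(\tilde S,\,n\tilde C_\alpha+nD_\alpha+mK_{\tilde S}\bigr)=h^0\bigl(\tilde S,\,n\tilde C_\alpha+mK_{\tilde S}\bigr),
\]
and the analogous equality for $\beta$. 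This is precisely the second assertion of the paper's Remark following the definition of adjoints (that $\ad_{n,m}(C)=f_*(|n\tilde C'+mK_S|)$ also for $\tilde C'=\tilde C+D$ with $D$ exceptional), and your argument never addresses it; your parenthetical justification that $K_{\tilde S}$ and $\tilde C$ are intrinsic to $\tilde S$ is true but beside the point, since the issue is whether the intrinsic middle term computes either adjoint at all.

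Note, moreover, that this missing equality does not follow from your blow-up lemma: that lemma compares pullbacks of one divisor across different resolutions of the same curve, whereas here the two divisors live on the same surface and differ by the non-pulled-back divisor $nD_\alpha$. Nor is it a one-line base-locus argument: a component $E$ of $D_\alpha$ is a smooth rational curve disjoint from $\tilde C_\alpha$ (smoothness of $\tilde C$ forces its components to be pairwise disjoint), so $(n\tilde C_\alpha+nD_\alpha+mK_{\tilde S})\cdot E=(n-m)E^2-2m$, which is negative when $E^2\in\{-1,-2\}$ but can be positive when $E^2\ll 0$ and $m>n$; hence one cannot always force $nD_\alpha$ into the fixed part by intersection numbers alone, and a genuine induction along the factorization of $\alpha$ into blow-downs, keeping track of $D_\alpha$, is required. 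Once that statement is supplied, your argument becomes a fleshed-out version of the paper's proof, which is exactly the observation that both adjoint systems are pushforwards, under $\alpha$ and $\beta$ respectively, of the single linear system $|n\tilde C+mK_{\tilde S}|$.
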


\begin {proof} It follows from diagram \eqref {eq:diag}, from \eqref {eq:tr} and from 
\[
\ad_{n,m}(C)=\alpha_*(|n\tilde C+mK_S|), \quad \ad_{n,m}(C')=\beta_*(|n\tilde C+mK_S|).
\] \end{proof}

\begin{remark}\label{rem:van} In \cite[\S 4]{CC} there is a proof of Kantor's Lemma, under a useless restrictive hypothesis.

Though irrelevant for us, it should be noted that, strictly speaking, the adjoint systems themselves, \emph{are not} Cremona invariant, due to the possible existence for them of (exceptional) fixed components which can be contracted by a Cremona transformation. 
\end{remark}

\subsection{Pairs} Let $(S,D)$ be a \emph{pair}, namely $D$ is a reduced curve on a smooth projective surface $S$.
For any non--negative integer $m$, the \emph{$m$--log plurigenus} of $(S,D)$ is
\[
P_m(S,D):=h^0(S, \cO_S(m(D+K_S)).
\]
If $P_m(S,D)=0$ for all $m\ge1$, then one says that the \emph{log Kodaira dimension} of the pair $(S,D)$ is $\kod(S,D)=-\infty$. Otherwise
\[
\kod(S,D)=\max\left\{ \dim \left( {\rm Im} \left( \phi_{|m(D+K_S)|} \right) \right)\right\}
\]
where $\phi_{|m(D+K_S)|} $ is the rational map determined by the linear system $|m(D+K_S)|$, whenever this is not empty. 

A pair $(S,D)$ is said to be \emph{contractible} if there exists a birational map $\psi\colon S\rto S'$ such that $D$ is contracted by $\psi$ to a union of points, namely $\psi$ is constant on any irreducible component of $D$.

\begin{remark}\label{rem:contractible1} Assume $S$ rational. 
If $\psi:S \to S'$ is a birational morphism which contracts $D$, then $D$ is contained in the exceptional locus of $\psi$, therefore all connected components of $D$ have arithmetic genus 0, in particular they have normal crossings. Moreover, $\kod(S,D)=-\infty$.

If $(S,D)$ is contractible, then there is a resolution of the indeterminacies of $\psi$, i.e., a commutative diagram
\[
\xymatrix{
& \tilde S \ar[rd]^\beta \ar[ld]_\alpha \\
S \ar@{-->}[rr]_\psi & & S'
}
\]
where $\alpha$ and $\beta$ are birational morphisms.
If $\tilde D$ is the strict transform of $D$ via $\alpha$, then $\tilde D$ is contracted to a union of points by the morphism $\beta$, hence $\kod(\tilde S,\tilde D)=-\infty$. 
\end{remark}

Let $(S,D)$ and $(S',D')$ be pairs. We will say that $(S,D)$ and $(S',D')$ are \emph{birationally equivalent} if there is a birational map $\phi: S\dasharrow S'$ such that $\phi$ [resp.\ $\phi^ {-1}$] does not contract any irreducible component of $D$ [resp.\ of $D'$] and the image of $D$ via $\phi$ is $D'$ (hence the image of $D'$ via $\phi^ {-1}$ is $D$).

With this definition we immediately have:

\begin{lemma}
If $(S,D)$ is birationally equivalent to $(S',D')$ and $(S',D')$ is contractible, then $(S,D)$ is contractible too.
\end{lemma}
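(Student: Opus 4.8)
The plan is to show that the composition of the two given birational maps contracts $D$. Write $\phi\colon S\rto S'$ for the birational equivalence and $\psi\colon S'\rto S''$ for a map contracting $D'$ to a union of points. I claim that $\psi\circ\phi\colon S\rto S''$ contracts $D$, which is exactly the assertion that $(S,D)$ is contractible.

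First I would pass to a common resolution. Resolving the indeterminacy of $\phi$ and of $\psi$ and then dominating both resolutions by a single smooth surface, one obtains a smooth surface $W$ together with birational morphisms $a\colon W\to S$, $b\colon W\to S'$ and $c\colon W\to S''$ such that $b=\phi\circ a$ and $c=\psi\circ b$ as rational maps, so that $c=(\psi\circ\phi)\circ a$. Let $\tilde D$ denote the strict transform of $D$ via $a$.

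Next I would trace $\tilde D$ through the two morphisms. Since $\phi$ maps $D$ to $D'$ and contracts no irreducible component of $D$, the morphism $b$ carries $\tilde D$ onto $D'$ without contracting any of its components; in particular $b_*(\tilde D)=D'$ and each component of $\tilde D$ dominates a component of $D'$. On the other hand, $\psi$ is constant on every component of $D'$, as it contracts $D'$ to a union of points. Combining these, $c=\psi\circ b$ is constant on every component of $\tilde D$: a component $\Gamma\subset\tilde D$ maps dominantly by $b$ onto a component $\Gamma'\subset D'$, which $\psi$ collapses to a point, so $c(\Gamma)$ is a single point. Hence $c$ contracts $\tilde D$ to a finite set. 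The birational morphisms $a\colon W\to S$ and $c\colon W\to S''$, together with the strict transform $\tilde D$ of $D$ which $c$ contracts to points, constitute precisely the resolution diagram described in Remark \ref{rem:contractible1} for the map $\psi\circ\phi$; therefore $\psi\circ\phi$ contracts $D$ and $(S,D)$ is contractible.

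The point requiring care is the bookkeeping of strict transforms across a birational map, and this is where the hypotheses enter in an essential way. The conclusion holds only because $\phi$ contracts no component of $D$: this is what guarantees that the transform of $D$ survives all the way to $S'$ as the \emph{whole} of $D'$, and is therefore genuinely collapsed by $\psi$. Were a component of $D$ allowed to disappear before reaching $S'$, one could no longer identify its transform with $D'$ and the argument would break down — which is exactly the circularity phenomenon flagged in \S\ref{S:history}.
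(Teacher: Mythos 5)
Your proof is correct: the paper states this lemma without any proof at all (``with this definition we immediately have''), and your argument --- composing the birational equivalence $\phi$ with the contraction $\psi$, passing to a common resolution, and checking that the strict transform of $D$ maps component-by-component onto components of $D'$ which are then collapsed --- is exactly the immediate argument the paper has in mind. Your bookkeeping of strict transforms and your remark on why the non-contraction hypothesis on $\phi$ is essential are both sound, just spelled out in more detail than the paper deemed necessary.
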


\begin{remark}
If a reduced plane curve $C$ is Cr-contractible, that is condition $(a)$ in the Introduction, then, setting $(S,\tilde C)$ as at beginning of the section, one has $\kod(S,\tilde C)=-\infty$, which is condition $(b)$ in the Introduction.

Since $\tilde C$ is effective, condition $(b)$ is actually equivalent to
\begin{equation*}\label{eq:ad_mn}
\ad_{n,m}(C)=\emptyset,
\qquad
\text{ for each $m\ge n \ge1$.}
\end{equation*}
In particular $(b)$ implies
\begin{equation}\label{eq:ad_m}
\ad_{m}(C)=\emptyset,
\qquad
\text{ for each $m\ge1$,}
\end{equation}
which is condition $(c)$ in the Introduction.

Condition $(c)$ trivially implies  $(d)$.
Let us see that  $(d)$ is equivalent to  $(d')$.
Indeed, $P_1(S,\tilde C)=0$ is equivalent to $|\tilde C+K_S|=\emptyset$. Then, by adjunction, all irreducible components of $\tilde C$ are smooth rational curves. Thus, $2\tilde C+2K_S$ intersects all components of  $\tilde C$ negatively, so that $\tilde C$ is in the fixed part of  $|2\tilde C+2K_S|$, if this is not empty. In conclusion, one has $P_2(S,\tilde C)=\dim (|2\tilde C+2K_S|)+1=0$ if and only if $|\tilde C+2K_S|=\emptyset$, that is $\ad_2(C)=\emptyset$.
\end{remark}

\subsection{Generalities on union of lines}
In this paper we will study curves $C$ which are unions of distinct lines and have vanishing adjoints, i.e., such that \eqref{eq:ad_m} holds. Then we will see which of them have $\kod(S,\tilde C)=-\infty$ and which are Cr-contractible. We will now prepare the territory for this. 

Let $C$ be a reduced plane curve with  $d=\deg(C)$. If $C$ is singular, let $m_0\ge m_1 \ge \cdots \ge m_r\ge 2$ be the  multiplicities of the singular points $P_0,\ldots,P_r$ of $C$, which can be \emph{proper} or \emph{infinitely near}. We set  $m_{i}=1$ if $i>r$. If $C$ is smooth, we assume $m_0=1$ and $P_0$ is a general point of $C$, and $m_i=1$ for $i>0$. 
By the \emph{proximity inequality} (see \cite {CC} as a general reference for these matters and for notation), we may and will assume that $P_i \infnear{} P_j$ (i.e.\ $P_i$ is infinitely near to $P_j$) implies $i > j$.
Therefore $P_0$ is proper and either $P_1$ is also proper  or $P_1 \infnear[1] P_0$ (i.e.\ $P_1$ is infinitely near of order 1 to $P_0$).

As announced in the Introduction, we will use the notation $(d;m_0,m_1,\ldots, m_r)$ to denote a linear system of plane curves of degree $d$ with assigned base points $P_0,\ldots,P_r$ with respective multiplicities at least $m_0,\ldots, m_r$. Hence we may write $C\in (d;m_0,m_1,\ldots, m_r)$.
We will write $C\cong(d;m_0,m_1,\ldots, m_r)$ whenever $C$ has multiplicity exactly $m_i$ at $P_i$, $i=1,\ldots,r$.
If $C\cong (d;m_0,m_1,\ldots, m_r)$, then ${\rm ad}_{n,m}(C)=(nd-3m;nm_0-m,\ldots,nm_q-m)$, where $q$ is the maximum such that $nm_q>m$.

\begin{lemma}\label{lem:1} In the above setting, one has  $d-m_0 \ge 0$ with equality if and only if $C$ consists of $d$ lines in the pencil of centre $P_0$, in which case $C$ is Cr-contractible.\end{lemma}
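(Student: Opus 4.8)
The plan is to prove the two assertions of Lemma~\ref{lem:1} separately. First I would establish the inequality $d-m_0\ge 0$. Since $C$ is a curve of degree $d$ and $P_0$ is a point of multiplicity $m_0$ on $C$, a general line $\ell$ through $P_0$ meets $C$ in $d$ points counted with multiplicity, and $P_0$ alone already contributes $m_0$ to this intersection (here we use that $P_0$ is a \emph{proper} point, which is automatic since $P_0$ is always taken proper in our setup). By B\'ezout, $m_0\le \ell\cdot C=d$, giving $d-m_0\ge0$. The same B\'ezout count is what drives the characterization of the equality case.

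Next I would analyze when $d=m_0$ holds. In this case the general line $\ell$ through $P_0$ meets $C$ only at $P_0$, with full intersection multiplicity $m_0=d$ concentrated there. The key step is to deduce from this that every irreducible component of $C$ passes through $P_0$ and is itself a line through $P_0$: if some component $L$ did not pass through $P_0$, then a general $\ell$ through $P_0$ would meet $L$ in a point distinct from $P_0$, forcing $\ell\cdot C>m_0$, a contradiction. Hence $P_0\in L$ for every component $L$. Moreover, since $C$ is a union of $d$ distinct lines and each passes through $P_0$, the multiplicity of $C$ at $P_0$ equals the number of components, so $C$ consists of exactly $d$ lines of the pencil centred at $P_0$. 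I expect the main subtlety here is making sure one reduces correctly to the union-of-lines hypothesis: the lemma is stated in the context of unions of distinct lines, so I would phrase the argument assuming each component is a line and confirm that the only way to get $\mathrm{mult}_{P_0}(C)=d=\deg(C)$ is for all $d$ lines to be concurrent at $P_0$.

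Finally, I would prove that such a $C$ is Cr-contractible. The cleanest approach is to exhibit a Cremona transformation explicitly, or to invoke the earlier framework. The $d$ lines through $P_0$ can be collapsed by composing with a de~Jonqui\`eres-type or even an iterated quadratic map centred appropriately: blowing up $P_0$ separates the $d$ lines into $d$ disjoint points on the exceptional curve, and then a suitable birational map contracts them. More concretely, choosing coordinates so that $P_0=[0,0,1]$, the pencil of lines through $P_0$ is cut out by forms in $x,y$, and the map $[x,y,z]\mapsto[x,y,g(x,y,z)]$ for an appropriate $g$ (a de~Jonqui\`eres transformation centred at $P_0$) sends the pencil of lines through $P_0$ to itself while contracting $C$ to points on the line $z=0$. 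Since any finite set of points can in turn be mapped to a single point by a Cremona transformation (as noted in the Introduction), this shows $C$ is Cr-contractible. The main obstacle is choosing the transformation so that it genuinely contracts each of the $d$ lines; once the configuration is recognized as $d$ concurrent lines, this is standard, and I would keep the description brief since the explicit contraction of a pencil of concurrent lines is classical.
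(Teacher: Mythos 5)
Your skeleton---B\'ezout for the inequality and equality case, then a de~Jonqui\`eres map centered at $P_0$ for the contraction---matches the paper's, but the step that carries essentially all the content of the paper's proof, namely the actual contraction, is not correct as written. The map $[x,y,z]\mapsto[x,y,g(x,y,z)]$ is not a Cremona transformation unless $g$ is linear: the three coordinates of a rational map $\PP^2\rto\PP^2$ must be forms of the same degree, so your formula only yields projectivities, which contract nothing. The correct shape is $[x,y,z]\mapsto[xq:yq:p]$ with $\deg q=n-1$, $\deg p=n$, and the real content is the choice of degree and base points, which is precisely what ``an appropriate $g$'' and ``classical'' gloss over. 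The paper supplies it: writing $d=2\ell+\epsilon$, $\epsilon\in\{0,1\}$, take the de~Jonqui\`eres map of degree $n=\ell+1+\epsilon$ with multiplicity $n-1=\ell+\epsilon$ at $P_0$ and with its $2n-2=d+\epsilon$ simple base points chosen one on each of the $d$ lines (plus one extra general point if $\epsilon=1$); then each line has image of degree $n-(n-1)-1=0$, i.e.\ is contracted. The crux is the count $2n-2\ge d$, which forces the degree to be about $d/2$; nothing of this appears in your proposal. Your blow-up alternative also does not by itself produce a Cremona transformation: after blowing up $P_0$ the strict transforms of the lines are $0$-curves, so one must blow up a further point on each before they can be contracted, and the contraction then lands on a Hirzebruch surface rather than on $\PP^2$; to conclude one still has to return to $\PP^2$ by a map whose fundamental points avoid the images of the contracted lines---exactly the circularity trap discussed in \S\ref{S:history}.

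A second, smaller point: your treatment of the equality case is circular as phrased. At that point of the paper, Lemma \ref{lem:1} concerns an \emph{arbitrary} reduced plane curve (the specialization to unions of lines happens only after Lemma \ref{lem:3}; Lemma \ref{lem:2}, in the same setting, explicitly involves a non-linear component), so ``$C$ consists of $d$ concurrent lines'' is a conclusion, not a hypothesis. Your B\'ezout argument correctly shows that every component passes through $P_0$; to finish, observe that $\sum_i \mathrm{mult}_{P_0}(C_i)=d=\sum_i\deg(C_i)$ together with $\mathrm{mult}_{P_0}(C_i)\le\deg(C_i)$ forces $\mathrm{mult}_{P_0}(C_i)=\deg(C_i)$ for every component $C_i$, and an irreducible plane curve whose multiplicity at a point equals its degree is a line (project from $P_0$). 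This part is easy, and indeed the paper dismisses it with ``only the last assertion needs to be justified,'' but as written your argument assumes what it must prove.
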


\begin{proof} Only the last assertion needs to be justified. Let $d=2\ell+\epsilon$, with $\epsilon\in\{0,1\}$. 
Consider the de Jonqui\`eres transformation of degree $\ell+1+\epsilon$ centered at $P_0$, with multiplicity $\ell+\epsilon$, at  $d$ simple base points, each one general on a component of $C$, and, in addition  if $\epsilon=1$, at further general simple base point. This transformation contracts $C$ to $d$ distinct points of the plane.
\end{proof}

Set 
\begin{equation}\label{eq:set}
d-m_0=2h+\varepsilon,\quad \text{with}\quad \varepsilon\in\{0,1\}.
\end{equation}

\begin{lemma}\label{lem:2} In the above setting, if $d=m_0+1$, then  $C$ consists of  $\ell\ge 0$ lines in the pencil of centre $P_0$, plus an irreducible curve $C'$ of degree $d'=d-\ell$, with $P_0$ of multiplicity $d'-1$ and all points of $C$ off $P_0$ are non--singular. Then $C$ is Cr-contractible.\end{lemma}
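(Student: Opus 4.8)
The plan is to prove the two assertions in turn: first the structural description of $C$, then its Cr-contractibility. For the structure, I would split $C=C''+C'$, where $C''$ is the union of the $\ell$ line-components of $C$ passing through $P_0$ and $C'$ collects the remaining components; then $\deg C'=d'=d-\ell$, and since $C''$ has multiplicity $\ell$ at $P_0$, the curve $C'$ has multiplicity $m_0-\ell=d'-1$ there. For every irreducible component $\Gamma$ of $C'$ one has $\operatorname{mult}_{P_0}\Gamma\le\deg\Gamma-1$ (equality $\operatorname{mult}_{P_0}\Gamma=\deg\Gamma$ would force $\Gamma$ to be a line through $P_0$, which has been removed), so each deficiency $\deg\Gamma-\operatorname{mult}_{P_0}\Gamma$ is at least $1$; as these deficiencies sum to $d'-(d'-1)=1$, there is exactly one component, i.e.\ $C'$ is irreducible. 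Finally $p_a(C')=\binom{d'-1}{2}$, whereas the $\delta$-invariant of the $(d'-1)$-fold point $P_0$ is already at least $\binom{d'-1}{2}$; since $g(C')=p_a(C')-\sum_P\delta_P\ge0$, equality is forced at $P_0$ and no further singular point can occur, so $P_0$ is an ordinary $(d'-1)$-fold point, $C'$ is smooth elsewhere, and $C'$ is rational.

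For Cr-contractibility I would treat, by induction on $d'$, every configuration of this shape, namely $\ell$ lines through a point plus an irreducible rational curve with an ordinary $(d'-1)$-fold point at the same point. If $d'\ge2$, choose two general smooth points $A,B$ of $C'$ (so that $P_0,A,B$ are not collinear and $A,B$ lie on no $L_i$) and apply the quadratic transformation $\sigma$ based at $P_0,A,B$. Each $L_i$ is carried to a line, and all these images pass through the point $P_0'$ onto which $\sigma$ contracts the line $\overline{AB}$ joining the other two base points; the principal component $C'$, of degree $d'$ with an $(d'-1)$-fold point at $P_0$, is carried to an irreducible, hence again rational, curve of degree $d'-1$ with a $(d'-2)$-fold point at the \emph{same} point $P_0'$. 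Thus $\sigma(C)$ is again $\ell$ lines through a point plus a principal component, with $d'$ lowered by one; iterating, we reach $d'=1$. (Alternatively, a single de Jonqui\`eres map of degree $d'$ centered at $P_0$ with $2d'-2$ simple base points on $C'$ sends $C'$ to a line in one step, at the cost of checking the net is homaloidal.)

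It then remains to contract the configuration with $d'=1$, namely $\ell$ lines $L_1,\dots,L_\ell$ through $P_0$ together with one transversal line $M$. Here the quadratic transformation based at $P_0$ and at the two crossing points $L_1\cap M$ and $L_2\cap M$ contracts $M,L_1,L_2$ and sends $L_3,\dots,L_\ell$ to $\ell-2$ lines through a common point, which are Cr-contractible by Lemma \ref{lem:1} (the cases $\ell\le2$, a pencil or a triangle, being immediate). Composing this last map, the quadratics of the induction, and the de Jonqui\`eres map supplied by Lemma \ref{lem:1} exhibits one Cremona transformation sending every component of $C$ to a point, which is the assertion. The step I expect to require the most care is the bookkeeping that at each stage the images of the $L_i$ and the singular point of the principal component are carried to lines through, and to, \emph{one and the same} point $P_0'$, so that the inductive shape is genuinely preserved; this rests on the standard behaviour of a quadratic map on the pencil of lines through one of its base points, together with the separate (though easy) treatment of the terminal union-of-lines configuration.
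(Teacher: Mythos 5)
Your structural argument (irreducibility of $C'$ by the deficiency count, ordinariness of $P_0$ and smoothness of $C'$ elsewhere by the genus count) is correct and in fact more detailed than the paper's, which dismisses that part as obvious; likewise your inductive reduction to $d'=1$ by quadratic maps based at $P_0$ and two general points $A,B\in C'$ is sound, and differs harmlessly from the paper's one-stroke reduction by a de Jonqui\`eres map of degree $d'$ (in your reduction no component of $C$ is ever contracted, since $\overline{P_0A}$, $\overline{P_0B}$, $\overline{AB}$ are not components of $C$). The gap is in the terminal step, and it is exactly the circularity pitfall this paper warns about in \S\ref{S:history}. Your quadratic map $\tau$ based at $P_0$, $L_1\cap M$, $L_2\cap M$ contracts $M$ \emph{precisely to the point} $q_1$ through which the images of $L_3,\dots,L_\ell$ pass, and the transformation supplied by Lemma \ref{lem:1} to contract those $\ell-2$ concurrent lines is a de Jonqui\`eres map $\delta$ of some degree $\nu\ge2$ \emph{centered at $q_1$} with multiplicity $\nu-1$. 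Near $q_1$ the resolution of $\tau$ is just the contraction of the strict transform $\hat M$ of $M$ to $q_1$, while the resolution of $\delta$ is the blow-up of $q_1$; by uniqueness of the blow-up these glue, identifying $\hat M$ with the exceptional curve $F$ over $q_1$. Hence on a common resolution of $\delta\circ\tau$ the strict transform of $M$ \emph{is} $F$, whose image in the last plane has degree $F\cdot\bigl(\nu H-(\nu-1)F-\sum_i G_i\bigr)=\nu-1\ge1$. So your composed map does not contract $M$: the component ``reappears'' as a curve of degree $\nu-1$, and the claimed single Cremona transformation sending every component of $C$ to a point does not exist along this route (the problem occurs for every $\ell\ge3$; for $\ell\le2$ nothing is applied after $\tau$ and your argument is fine).

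This is why the paper, once reduced to $\ell$ lines through $\bar P_0$ plus a transversal line $L$, does not contract $L$ first: it contracts \emph{all} of $\bar C$ simultaneously by a single de Jonqui\`eres map of degree $\mu$ centered at $\bar P_0$, whose $2\mu-2$ simple base points are placed at $\mu$ of the crossings $L\cap L_i$ and generically on the remaining lines, so that every component carries total base multiplicity $\mu$ and no component is ever contracted to a point that serves as a base point of a later map. A repair of your argument must do something of this sort; note that you cannot simply re-choose the net in Lemma \ref{lem:1} to avoid $q_1$: a homaloidal net of degree $\nu$ with only proper base points off $q_1$ that contracts the $\ell-2$ concurrent lines would need multiplicity sum at least $(\ell-2)\nu$ on them, against the identity $\sum_i m_i=3\nu-3$, which is impossible as soon as $\ell\ge5$.
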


\begin{proof}  Only the last assertion needs to be justified. Assume $d'>1$.
Then the curve $C'$ is mapped to a line $L$ by a de Jonqui\`eres transformation of degree $d'$ centered at $P_0$, with multiplicity $d'-1$, and $2d'-2$ general simple base points of $C'$.  The curve $C$ is then transformed to
a curve $\bar C$ consisting of $\ell$ lines $L_1,\ldots,L_\ell$ in a pencil of centre a point $\bar P_0$, plus the line $L$ not passing through $\bar P_0$. So we are reduced to the case $d'=1$.

If $\ell=0$, we finish by contracting $L$ to a point with a quadratic transformation based at three points two of which lie on $L$. Similarly if $\ell=1$. So we may assume $\ell\ge 2$. 
Set then $\mu=(\ell+2)/2$ if $\ell$ is even or $\mu=(\ell+3)/2$ if $\ell$ is odd and
note that $\ell\ge\mu\ge 2$.
The de Jonqui\`eres transformation of degree $\mu$ centered at $\bar P_0$, with multiplicity $\mu-1$, plus  $\mu$ simple base points at $L\cap L_1$, $L\cap L_2$, \dots, $L\cap L_\mu$, plus $\ell-\mu$ simple base points each general  on one of the lines $L_{\mu+1}$, $L_{\mu+2}$, \dots, $L_{\ell}$, and further $2\mu-2-\ell$ simple general base points,  contracts $\bar C$ to $\ell+1$ distinct points of the plane.
\end{proof}

From now on, we may and will assume $h\ge 1$.

\begin{lemma} \label {lem:3} In the above setting, if \eqref {eq:ad_m} holds,  one has:\\
\begin{inparaenum}[(i)]
\item $m_0>h$, or equivalently $m_0>d/3$;\\
\item $m_{i} >h$ for $1\le i\le 2$;\\
\item $m_0+m_1+m_2\ge d+1$.
\end{inparaenum}
\end{lemma}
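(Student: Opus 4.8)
The plan is to prove all three assertions by contraposition: assuming some multiplicity is too small, I exhibit an effective curve in a suitable adjoint system, contradicting \eqref{eq:ad_m}. Throughout I use the explicit description $\ad_m(C)=(d-3m;m_0-m,\ldots,m_q-m)$ recalled before Lemma \ref{lem:1}, in which the base points are exactly the $P_i$ with $m_i>m$, together with the relation $d=m_0+2h+\varepsilon$ from \eqref{eq:set}. The latter makes the equivalence in $(i)$ transparent: $m_0>d/3\iff 3m_0>m_0+2h+\varepsilon\iff 2m_0>2h+\varepsilon\iff m_0>h+\varepsilon/2$, which for integers is the same as $m_0>h$. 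The only effectivity fact I need is that a plane linear system of non-negative degree with no base conditions is non-empty, and that such systems can be filled by unions of lines through the assigned points.

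For $(i)$ I would take the adjoint of index $m_0$. Since $m_0$ is the maximal multiplicity, no $P_i$ satisfies $m_i>m_0$, so $\ad_{m_0}(C)=(d-3m_0;)$ carries no base conditions. If $m_0\le d/3$ then $d-3m_0\ge0$ and this complete system of plane curves of degree $d-3m_0$ is non-empty, contradicting \eqref{eq:ad_m}. Hence $m_0>d/3$, equivalently $m_0>h$.

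For $(ii)$, since $m_1\ge m_2$ it suffices to prove $m_2>h$. Suppose instead $m_2\le h$ and consider $\ad_h(C)=(d-3h;\,m_0-h,\,m_1-h)$, whose base points lie among $P_0,P_1$ only (by $(i)$ the point $P_0$ occurs, while $P_1$ occurs iff $m_1>h$). Here $d-3h=m_0-h+\varepsilon$. Writing $a=m_0-h\ge1$ and, when $P_1$ is a base point, $b=m_1-h$ with $a\ge b\ge1$, I would produce a curve of degree $a+\varepsilon$ with the prescribed multiplicities as an explicit union of lines: when $P_1$ is proper, take $b$ copies of the line $\overline{P_0P_1}$, plus $a-b$ general lines through $P_0$, plus $\varepsilon$ general lines; when $P_1\infnear[1]P_0$, replace $\overline{P_0P_1}$ by the line through $P_0$ in the direction of $P_1$. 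In either case this realizes multiplicity $a$ at $P_0$ and $b$ at $P_1$ in degree $a+\varepsilon$, so $\ad_h(C)\neq\emptyset$, a contradiction (the case $m_1\le h$ is the same construction with $b=0$). Therefore $m_2>h$, whence $m_1\ge m_2>h$.

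Finally, $(iii)$ is immediate from $(ii)$: since $m_1,m_2\ge h+1$ we get $m_1+m_2\ge 2h+2\ge 2h+\varepsilon+1$ (as $\varepsilon\le1$), so $m_0+m_1+m_2\ge m_0+2h+\varepsilon+1=d+1$. The only delicate point in the whole argument is the non-emptiness construction in $(ii)$ when $P_1$ is infinitely near $P_0$; I expect checking that the chosen union of lines attains multiplicity exactly $b$ at the infinitely near point (and that the general lines through $P_0$ miss $P_1$) to be the step requiring the most care, though it becomes routine once the direction line is singled out.
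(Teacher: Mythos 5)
Your proof is correct and follows essentially the same route as the paper: contraposition by exhibiting a non-empty adjoint system, namely an adjoint with no base conditions for (i) (you use index $m_0$ where the paper uses $[d/3]$, an immaterial difference), the non-emptiness of $\ad_h(C)=(m_0-h+\varepsilon;m_0-h,\overline{m_1-h})$ when $m_2\le h$ for (ii), and the arithmetic consequence $m_1+m_2\ge 2h+1+\varepsilon$ for (iii). The only difference is that you spell out the explicit union-of-lines member (including the infinitely near case $P_1\infnear[1]P_0$) whose existence the paper simply asserts.
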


\begin{proof} (i) The equivalence between  $m_0>h$ and $m_0>d/3$ is clear. Let us prove that $m_0>d/3$. The assertion is trivial if $d<3$, so we assume $d\ge3$.
Suppose $m_0\le d/3$. Then  ${\rm ad}_{[d/3]}(C)$ is the complete linear systems of curves of degree $d-3\left[\frac{d}{3}\right]\ge 0$ which is not empty,   contradicting \eqref{eq:ad_m}.

(ii) If $m_2\le h$, then 
$ {\rm ad}_h(C)=(m_0-h+\varepsilon; m_0-h, \overline {m_1-h})$, where $\overline {m_1-h}=\max\{{m_1-h},0\}$, is not empty, contradicting \eqref{eq:ad_m}. 

(iii) Follows from (i) and (ii). \end{proof}

In the rest of this section, we consider the case that $C$ is a reduced union of $d$ lines.
Since $m_i$ is the number of lines passing through $P_i$, for $i=0,\ldots,r$, one has
\begin{equation}\label{upper}
m_0+m_1+m_2 \le d+3.
\end{equation}
See Lemma \ref{lem:comb1} below for the description of the case where equality occurs.

If $C\cong (d;m_0,\ldots,m_r)$ is a reduced union of lines, then we say that  $(d;m_0,\ldots,m_r)$ is the \emph{type} of $C$. Moreover, we will say that two reduced union of lines
\[
C=L_1\cup L_2 \cup \cdots \cup L_d
\qquad
\text{ and }
\qquad
D=R_1\cup R_2 \cup \cdots \cup R_d
\]
have the same \emph{configuration} if there exists a permutation $\sigma$ of $\{1,\ldots,d\}$ such that
\[
L_{i_1} \cap L_{i_2} \cap \cdots \cap L_{i_k}\ne\emptyset
\ \Longleftrightarrow\ 
R_{\sigma(i_1)} \cap R_{\sigma(i_2)} \cap \cdots \cap R_{\sigma(i_k)}\ne\emptyset,
\quad\text{ for each $i_1,\ldots,i_k$.}
\]

\begin{remark}\label{typenotconf}
Clearly, two reduced unions of lines with the same configuration are also of the same type, but, in general, the type does not uniquely determine the configuration.
For example, if $C$ is the reduced union of 6 lines with two triple points, i.e., the type of $C$ is $(6;3^2,2^9)$, then there are exactly two configurations of this type, according to the following possibilities: either the line passing through the triple points is a component of $C$ or it is not.
\end{remark}

We will denote a configuration of a reduced union of lines $C=L_1\cup\cdots\cup L_d$ as follows:
\begin{equation} \label{configuration}
(d;\{a_{0,1},a_{0,2},\ldots,a_{0,m_0}\}, \{a_{1,1},a_{1,2},\ldots,a_{1,m_1}\}, \ldots, \{a_{s,1},a_{s,2},\ldots,a_{s,m_s}\})
\end{equation}
where $m_s\geq3$ and $P_0=L_{a_{0,1}}\cap L_{a_{0,2}} \cap \cdots \cap L_{a_{0,m_0}}$ is a point of multiplicity $m_0$, 
$P_1=L_{a_{1,1}}\cap L_{a_{1,2}} \cap \cdots \cap L_{a_{1,m_1}}$ is a point of multiplicity $m_1$, 
and so on, for all points of multiplicity $\geq3$.
In other words, \eqref{configuration} lists the singular points of $C$ of multiplicity $m\geqslant3$ and the lines containing each of them.
In particular, we list the singular points according to their multiplicities in non--increasing order, and, among the points of the same multiplicity, we will usually list them in lexicographical order with respect to the given ordering of the lines.

\begin{remark}\label{double}
In \eqref{configuration} one does not need to list the nodes, i.e., the double points of $C$. Indeed, $P_{i,j}=L_i\cap L_j$ is a node of $C$ if and only if there exist no $k,h,l\in \lbrace 1,\ldots,r\rbrace$ such that $a_{k,h}=i$ and $a_{k,l}=j$.
The number of nodes of $C$ is
\[
\binom{d}{2}-\sum_{i=0}^s \frac{m_i(m_i-1)}{2}\,.
\]
\end{remark}

\begin{example}
If $C$ is a reduced union of 6 lines of type $(6;3^2,2^9)$ such as in Remark \ref{typenotconf},  the two possible configurations are $(6,\{1,2,3\},\{1,4,5\})$ and $(6,\{1,2,3\},\{4,5,6\})$. The former means that the triple points of $C$ are $P_0=L_1\cap L_2\cap L_3$ and $P_1=L_1\cap L_4\cap L_5$, so that the nodes are $P_{1,6}=L_1\cap L_6$, $P_{2,4}$, $P_{2,5}$, $P_{2,6}$, $P_{3,4}$, $P_{3,5}$, $P_{3,6}$, $P_{4,6}$, $P_{5,6}$, cf.\ Remark \ref{double}.
The latter configuration means that the triple points of $C$ are $P_0=L_1\cap L_2\cap L_3$ and $P_1=L_4\cap L_5\cap L_6$, so that the nodes are $P_{1,4}=L_1\cap L_4$, $P_{1,5}$, $P_{1,6}$, $P_{2,4}$, $P_{2,5}$, $P_{2,6}$, $P_{3,4}$, $P_{3,5}$, $P_{3,6}$, cf.\ Remark \ref{double}.
\end{example}

\begin{remark}
We will see later in Remark \ref{deg9} that different configurations of the same type may behave quite differently with respect to the adjoint linear systems and Cremona contractibility.

Furthermore, two reduced union of lines with the same configuration are not necessarily projectively equivalent.
For example, if the type of $C$ is $(4;4)$, then there is only one possible configuration, but the isomorphism classes of four lines passing through a point depend on one parameter.
\end{remark}

\section{Configurations of lines with  vanishing adjoints}\label{s:adj}

In this section we classify reduced plane curves $C$ which are union of $d\ge 12$ distinct lines and such that \eqref{eq:ad_m} holds, i.e., with vanishing adjoint linear systems.

\subsection{Basics}\label {ssec:gen}  We keep the notation 
introduced above, including \eqref {eq:set}. The degree $d$ of $C$ is the number of its components and the singular points of $C$ are all proper. We will assume that \eqref {eq:ad_m} holds.

Set $d=3\delta +\eta$, with $0\le \eta\le 2$. By Lemma \ref {lem:3} we have $m_0>\delta$, and we set $m_0=\delta+\mu$, with $\mu\ge 1$. Set $\mu=2\nu+\tau$, with $0\le \tau\le 1$, so that
$d-m_0=2(\delta-\nu)+(\eta-\tau)$, thus:\\
\begin{inparaenum}
\item  [(i)] $h=\delta -\nu$ and $\varepsilon= \eta-\tau$, unless either\\
\item [(ii)]  $\eta=0, \tau=1$, in which case $h=\delta-\nu-1$ and $\varepsilon=1$, or\\
\item [(iii)] $\eta=2, \tau=0$, in which case $h=\delta-\nu+1$ and  $\varepsilon=0$.
\end{inparaenum}

We set
\[
m:=m_0+m_1+m_2.
\]
By \eqref {upper} one has $m\le d+3=3\delta+\eta+3$.
Since $m_1,m_2\ge h+1$ by  (ii) of Lemma \ref {lem:3}, we have:\\
\begin{inparaenum}[$\bullet$]
\item  $3\delta+\tau+\varepsilon+3\ge m\ge 3\delta+\tau+2$,  in case (i);\\
\item  $3\delta+3\ge m\ge  3\delta+1$, in case (ii);\\
\item   $3\delta+5\ge m\ge  3\delta+4$, in case (iii);\\
\end{inparaenum}
thus the interval in which $m$ lies is $[d+2-\varepsilon, d+3]$ and its length is
$\varepsilon+1\in \{1,2\}$, hence  $d+1\le m\le d+3$.
The following table shows the possible values of $m_1$ and $m_2$:
\begin{equation}\label{table}
\begin{array}{|c|c|c|c|c|c|c|c|c|c|c}
 \hline m_1 & m_2  &\varepsilon  & m & \text{possible cases}\\ \hline\hline
 h+1 & h+1 & 0,1  & d+2-\varepsilon & \text{(i)-(ii)-(iii)} \\ \hline
 h+2 & h+1 & 0,1 & d+3-\varepsilon  & \text{(i)-(ii)-(iii)}  \\ \hline
 h+2 & h+2 & 1  & d+3  & \text{(i)-(ii)} \\ \hline 
 h+3 &h+1 & 1& d+3  & \text{(i)-(ii)} \\ \hline 
\end{array}
\end{equation}

We will use the following notation
\[
m_2=\cdots= m_k, \quad m_2-1=m_{k+1}=\cdots=m_{k+l}>m_{k+l+1}
\]
where $k\ge2$ and $l\ge0$.
It will be essential for us the consideration of 
\[{\rm ad}_h(C)=(m_0-h+\varepsilon; m_0-h, m_1-h, (m_2-h)^{k-1},    (m_2-h-1)^{l},  \ldots )\]
which has to be empty and we note that $1\le m_1-h\le 3$ whereas $1\le m_2-h\le 2$.

The proofs of the following lemmas are elementary and can be left to the reader.

\begin{lemma}\label {lem:comb1} In the above setting, if $m=d+3$, then:\\
\begin{inparaenum}[$\bullet$]
\item $P_0,P_1,P_2$  are not collinear  and the sides of the triangle with vertices $P_0,P_1,P_2$ are components of $C$; \\
\item all components of $C$ pass through one of the points $P_0,P_1,P_2$;\\
\item the remaining singular points of $C$ have multiplicity at most 3.
\end{inparaenum}
\end{lemma}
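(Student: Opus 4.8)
The plan is to treat this as a pure incidence-counting statement, exploiting that $m_i$ is precisely the number of lines of $C$ through $P_i$. The inequality \eqref{upper} comes from counting the lines through $P_0,P_1,P_2$ with multiplicity and subtracting repetitions, and the content of the lemma is exactly the analysis of when that count is sharp. So first I would rewrite the bound in a bookkeeping-friendly form: the number of \emph{distinct} lines of $C$ passing through at least one of $P_0,P_1,P_2$ equals $m_0+m_1+m_2$ minus the number of lines counted more than once, where a line counted twice joins two of the three points and a line counted three times joins all three. Since this distinct count is at most $d$, the deficit $d+3-m$ simultaneously measures the missing incidences and the absence of sides.

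Next I would dispose of the collinear configuration. If $P_0,P_1,P_2$ lie on a line $L$, then any line of $C$ meeting two of the three points must coincide with $L$; hence at most one line, namely $L$ itself when it is a component, is ever over-counted, and it is then over-counted exactly twice. Consequently the distinct count is $m$ if $L$ is not a component and $m-2$ if it is, so in either case $m\le d+2$, contradicting $m=d+3$. This shows that equality forces $P_0,P_1,P_2$ to be non-collinear, which is the first half of the first bullet.

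With non-collinearity established, every line through two of the three points is one of the three sides of the triangle $P_0P_1P_2$, and no line passes through all three. The distinct count is therefore $m$ minus the number of sides that are components of $C$ (at most $3$), and it is bounded by $d$. Hence $m\le d+3$, with equality precisely when all three sides are components (the remainder of the first bullet) \emph{and} the distinct count equals $d$, i.e.\ no line of $C$ avoids all three vertices (the second bullet).

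Finally, for the third bullet I would take any singular point $Q\notin\{P_0,P_1,P_2\}$. By the second bullet each line of $C$ through $Q$ also passes through some vertex; assigning to each such line one vertex it contains gives a map into $\{P_0,P_1,P_2\}$ that is injective, since two distinct lines through both $Q$ and a common $P_i$ would coincide (as $Q\ne P_i$). Thus at most three lines pass through $Q$, so $Q$ has multiplicity at most $3$. None of these steps is a genuine obstacle, the whole argument being elementary incidence counting; the only real care needed is the bookkeeping of over-counted lines, and in particular isolating the collinear case so that the ``$-2$'' correction is not overlooked.
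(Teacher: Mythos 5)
Your proof is correct, and it is precisely the elementary incidence-counting argument the paper has in mind: the authors explicitly leave this lemma to the reader ("The proofs of the following lemmas are elementary and can be left to the reader"), and your bookkeeping of over-counted lines --- the collinear case giving $m\le d+2$, the non-collinear case giving $m\le d+\#\{\text{sides in }C\}$ with equality forcing all three sides in $C$ and every line through a vertex, and the injectivity argument bounding the remaining multiplicities by $3$ --- is exactly the intended filling of that gap.
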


\begin{lemma}\label {lem:comb2} In the above setting, if $m=d+2$, then:\\
\begin{inparaenum}
\item [($\alpha$)] either $P_0,P_1,P_2$  are collinear and the line joining them belongs to $C$, in which case 
all components of $C$ pass through one of the points $P_0,P_1,P_2$ and  the remaining singular points of $C$ have multiplicity at most 3;\\
\item [($\beta$)] or  $P_0,P_1,P_2$  are not collinear and    the sides of the triangle with vertices $P_0,P_1,P_2$ are components of $C$, in which case 
all components of $C$ but one pass through one of the points $P_0,P_1,P_2$, the remaining singular points of $C$  have multiplicity at most 4 and there are at most two of them with multiplicity 4;\\
\item [($\gamma$)] or  $P_0,P_1,P_2$  are not collinear and two of the three the sides of the triangle with vertices $P_0,P_1,P_2$ are components of $C$, in which case 
all components of $C$ pass through one of the points $P_0,P_1,P_2$ and  the remaining singular points of $C$ have multiplicity at most 3.
\end{inparaenum}
\end{lemma}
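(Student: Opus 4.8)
The plan is to run, in parallel for both Lemma \ref{lem:comb1} and the present statement, an inclusion--exclusion count of the components of $C$ through the three points $P_0,P_1,P_2$ of largest multiplicity, and then to read off the local structure at the remaining singular points from the resulting ``triangle plus spokes'' picture. Write $e_{ij}\in\{0,1\}$ for the indicator that the line $\overline{P_iP_j}$ is a component of $C$, and $e_{012}\in\{0,1\}$ for the indicator that $P_0,P_1,P_2$ are collinear with their common line a component. Since a component of $C$ passing through two of the $P_i$ is forced to be a side of the triangle they span (or the common line, if they are aligned), inclusion--exclusion shows that the number of components meeting $\{P_0,P_1,P_2\}$ equals $m_0+m_1+m_2-(e_{01}+e_{02}+e_{12})+e_{012}$. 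Denoting by $f\ge0$ the number of components avoiding all three points, this number is $d-f$, so the hypothesis $m=d+2$ gives $f=(e_{01}+e_{02}+e_{12})-e_{012}-2$.

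\textbf{The trichotomy.} First I would extract cases $(\alpha),(\beta),(\gamma)$ from $f\ge0$. If $P_0,P_1,P_2$ are collinear then $e_{01}=e_{02}=e_{12}=e_{012}=:e$ and $f=2(e-1)$, forcing $e=1$ and $f=0$: this is case $(\alpha)$, in which the line through the three points is a component and every component passes through one of them. If they are not collinear then $e_{012}=0$ and $f=(e_{01}+e_{02}+e_{12})-2\ge0$ forces at least two sides to be components; three sides give $f=1$, namely case $(\beta)$ with a unique extra line $R$, while exactly two sides give $f=0$, namely case $(\gamma)$. In each case the assertion about how many components miss all the $P_i$ is precisely the computed value of $f$.

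\textbf{Multiplicity bounds.} Next I would bound the multiplicities of the remaining singular points combinatorially. A point $Q\notin\{P_0,P_1,P_2\}$ lies on at most one side (two sides meet only at a vertex), and every non-side component through $Q$ other than $R$ is the join $\overline{QP_i}$ to a single vertex, so distinct such components hit distinct vertices. Hence in cases $(\alpha)$ and $(\gamma)$, where $f=0$, the multiplicity of $Q$ is at most $3$; in case $(\beta)$ the extra line $R$ adds at most one, giving multiplicity at most $4$, with equality only when $Q\in R$, $Q$ lies on no side, and all three joins $\overline{QP_0},\overline{QP_1},\overline{QP_2}$ are components.

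\textbf{The main obstacle: at most two quadruple points in $(\beta)$.} Projectively one can place arbitrarily many multiplicity-$4$ points on $R$, so this bound must use the vanishing of the adjoints \eqref{eq:ad_m}, and I expect it to be the delicate step. I would combine two inputs. Combinatorially, each multiplicity-$4$ point $Q$ contributes a join $\overline{QP_2}$, which is a non-side component through $P_2$, and distinct such points give distinct joins (since $P_2\notin R$); as $P_2$ carries exactly $m_2-2$ non-side components, the number $t$ of multiplicity-$4$ points satisfies $t\le m_2-2$. It thus suffices to show $m_2\le4$, i.e.\ $h\le3$, recalling $m_2=h+1$ throughout the range $m=d+2$ of \eqref{table}. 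To prove $h\le3$ I would assume $h\ge4$ and derive a contradiction with \eqref{eq:ad_m}: then no point off the vertices has multiplicity $\ge h+1$, so the only base points of $\ad_h(C)$ off $P_0$ are $P_1,P_2$. If $\varepsilon=1$ the system $\ad_h(C)=(m_0-h+1;\,m_0-h,m_1-h,m_2-h,\dots)$ contains the curve $\overline{P_1P_2}$ together with $m_0-h\ge2$ lines through $P_0$ (including $\overline{P_0P_1},\overline{P_0P_2}$), so it is nonempty. If $\varepsilon=0$ then $\ad_h(C)$ consists of unions of $m_0-h$ lines through $P_0$, and its emptiness forces $m_0-h\le1$, hence $m_0=m_1=m_2=h+1$ and $d=3h+1$; but then $\ad_{h-1}(C)=(4;\,2,2,2,\dots)$ is nonempty, as it contains a product of two conics through $P_0,P_1,P_2$ (chosen to absorb the at most three multiplicity-$4$ points). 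Either way \eqref{eq:ad_m} is violated, so $h\le3$, whence $m_2\le4$ and $t\le m_2-2\le2$. The remaining verifications --- the multiplicity-$3$ bounds in $(\alpha)$ and $(\gamma)$, and the realizability of the extremal configurations --- are the routine ``elementary'' checks the authors leave to the reader.
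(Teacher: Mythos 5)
Your proof is correct, but there is no written proof in the paper to compare it with: immediately before Lemmas \ref{lem:comb1}--\ref{lem:comb3} the authors declare that ``the proofs of the following lemmas are elementary and can be left to the reader''. Your inclusion--exclusion count, giving the trichotomy and the values $f=0,1,0$ of components missing $\{P_0,P_1,P_2\}$ in cases ($\alpha$), ($\beta$), ($\gamma$), together with the ``one join per vertex'' bound on multiplicities (at most $3$ when $f=0$, at most $4$ in case ($\beta$)), is surely the intended elementary argument. The genuinely valuable part of your write-up is the observation that the last clause of ($\beta$) --- at most two quadruple points --- is \emph{not} elementary in this sense: it fails without the hypothesis \eqref{eq:ad_m}, which is indeed part of ``the above setting''. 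For instance, a triangle, a transversal line $R$ avoiding the vertices, and the nine joins of three points of $R$ to the three vertices form $d=13$ lines with $m_0=m_1=m_2=5$, hence $m=d+2$, yet with three quadruple points (of course this configuration violates \eqref{eq:ad_m}, since its adjoint of index $3$ contains the triangle plus $R$). Your argument for this clause is sound: combinatorially $t\le m_2-2=h-1$, and $h\le3$ holds because for $h\ge4$ the quadruple points impose no conditions on $\ad_h(C)$, which is then visibly nonempty --- it contains sides plus lines through $P_0$ when $\varepsilon=1$ or $m_0-h\ge2$, while in the residual subcase $m_0=h+1$ one checks instead that $\ad_{h-1}(C)=(4;2^3,\dots)$ contains a union of two conics. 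This is also consistent with how the paper later uses the lemma: in the proof of Lemma \ref{lem:num3}, case ($\beta$), the bound $h\le3$ and the bound $k\le4$ are read off from exactly these two multiplicity statements. So your proposal correctly supplies the one step of the ``elementary'' proof that actually requires the adjoint hypothesis.
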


\begin{lemma}\label {lem:comb3} In the above setting, if $m=d+1$, then:\\
\begin{inparaenum}
\item [($\alpha'$)] either $P_0,P_1,P_2$  are collinear and the line joining them belongs to $C$, in which case 
all components of $C$ but one pass through one of the points $P_0,P_1,P_2$ and  the remaining singular points of $C$  have multiplicity at most 4;\\
\item [($\beta'$)] or  $P_0,P_1,P_2$  are not collinear and  the  sides of the triangle with vertices $P_0,P_1,P_2$ are components of $C$, in which case 
all components of $C$ but two pass through one of the points $P_0,P_1,P_2$,  the remaining singular points of $C$ have multiplicity at most 5 and there is at most one of them with multiplicity 5;\\
\item [($\gamma'$)] or  $P_0,P_1,P_2$  are not collinear and two of the three the sides of the triangle with vertices $P_0,P_1,P_2$ are components of $C$, in which case 
all components of $C$ but one pass through one of the points $P_0,P_1,P_2$ and   the remaining singular points of $C$  have multiplicity at most 4;\\
\item [($\delta'$)] or  $P_0,P_1,P_2$  are not collinear and only one side of the triangle with vertices $P_0,P_1,P_2$ is a component of $C$, in which case 
all components of $C$ pass through one of the points $P_0,P_1,P_2$ and   the remaining singular points of $C$  have multiplicity at most 3.
\end{inparaenum}

\end{lemma}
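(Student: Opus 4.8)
The plan is to decouple the two assertions in each case---first the global incidence structure (collinearity of $P_0,P_1,P_2$, which triangle sides belong to $C$, and how many components avoid $\{P_0,P_1,P_2\}$), and then the bounds on the multiplicities of the remaining singular points---treating both by elementary incidence counting rather than by adjoint systems. For $i=0,1,2$ let $A_i$ be the set of the $m_i$ components of $C$ through $P_i$, so that $|A_i|=m_i$ and $|A_0\cup A_1\cup A_2|\le d$. Inclusion--exclusion gives
\[
m=m_0+m_1+m_2=|A_0\cup A_1\cup A_2|+\sum_{i<j}|A_i\cap A_j|-|A_0\cap A_1\cap A_2|,
\]
and under the standing hypothesis $m=d+1$ everything is read off from the right-hand side.

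First I would settle the global structure. If $P_0,P_1,P_2$ are collinear, the only line through two of them is the line $R$ joining the three, so each $|A_i\cap A_j|$ and $|A_0\cap A_1\cap A_2|$ equals $1$ or $0$ according as $R\in C$ or not; as $R\notin C$ would force $m=|A_0\cup A_1\cup A_2|\le d$, we must have $R\in C$, whence $|A_0\cup A_1\cup A_2|=m-2=d-1$ and exactly one component avoids $\{P_0,P_1,P_2\}$. This is case $(\alpha')$. If $P_0,P_1,P_2$ are not collinear, then $A_0\cap A_1\cap A_2=\emptyset$ and $|A_i\cap A_j|\in\{0,1\}$ equals $1$ exactly when the side $P_iP_j$ is a component; writing $t$ for the number of such sides one gets $|A_0\cup A_1\cup A_2|=m-t=d+1-t$, and $t\ge 1$ since $|A_0\cup A_1\cup A_2|\le d$. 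The values $t=1,2,3$ then give precisely cases $(\delta')$, $(\gamma')$, $(\beta')$, with respectively $0$, $1$, $2$ components avoiding $\{P_0,P_1,P_2\}$.

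For the multiplicity bounds I would fix a singular point $Q\notin\{P_0,P_1,P_2\}$ and split the components through $Q$ into those meeting $\{P_0,P_1,P_2\}$ and the ``free'' ones avoiding these three points. Since $Q$ spans a unique line with each $P_i$, at most three components through $Q$ meet $\{P_0,P_1,P_2\}$ (fewer when $Q$ lies on a side), so the multiplicity of $C$ at $Q$ is at most $3$ plus the number of free components through $Q$. The counts above then give $\le 3$ in $(\delta')$, $\le 4$ in $(\gamma')$ and $(\alpha')$, and $\le 5$ in $(\beta')$. For the refinement in $(\beta')$, a point of multiplicity $5$ would have to lie on both free components simultaneously, and two distinct lines meet in a single point, so at most one such point can occur.

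The argument is genuinely elementary, so there is no serious obstacle; the only points demanding care are the degenerate incidences. One must verify that $R\in C$ is forced in the collinear case, check that coincidences $\overline{QP_i}=\overline{QP_j}$ (occurring when $Q$ lies on a side) only lower the incidence count and never violate a bound, and confirm that the two free components in $(\beta')$ cannot push more than one point to multiplicity $5$. Handling these degeneracies is what makes the four cases exhaustive and the stated bounds uniform, but none of them is deep.
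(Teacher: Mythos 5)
Your proof is correct and complete: the inclusion--exclusion count settles the global structure (forcing the joining line, respectively at least one triangle side, to lie in $C$, and giving the exact number of components missing $\{P_0,P_1,P_2\}$), and the split of lines through a singular point $Q$ into the at most three lines $\overline{QP_i}$ plus the free components yields all the multiplicity bounds, including the ``at most one quintuple point'' refinement in case $(\beta')$. The paper gives no proof at all --- it declares this lemma elementary and leaves it to the reader --- and your incidence-counting argument is precisely the kind of elementary verification intended, so there is nothing to compare beyond saying it fills that gap correctly.
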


\subsection{The case $m$ maximal}
Here we treat the case $m=d+3$, in which Lemma \ref {lem:comb1} applies. 
\subsubsection{The subcase $\varepsilon=0$}  We are either in case (i) or (iii) and in table \eqref {table} the second row  occurs, hence $m_0\ge m_1=h+2$.
Thus
\[{\rm ad}_h(C)=(m_0-h; m_0-h, 2, 1^{k-1}).\] 

\begin {lemma}\label {lem:num1} Assume \eqref {eq:ad_m} holds, $m=d+3$, $d\ge 12$, $h\ge 1$, $\varepsilon =0$, then $C\cong(d;d-2, 3, 2^ {2(d-3)})$.
\end{lemma}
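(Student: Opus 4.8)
The plan is to show that the hypotheses force $h=1$; once this is known the type follows at once. Indeed, in this subcase the second row of \eqref{table} gives $m_0=d-2h$, $m_1=h+2$ and $m_2=h+1$, so $h=1$ yields $m_0=d-2$, $m_1=3$, $m_2=2$; then $m_i\le m_2=2$ for every $i\ge2$, so all further singular points are nodes, and the identity $\binom{d}{2}=\sum_i\binom{m_i}{2}$ pins their number down to $\binom{d}{2}-\binom{d-2}{2}-\binom{3}{2}=2(d-3)$, giving $C\cong(d;d-2,3,2^{2(d-3)})$.

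First I would translate the vanishing of $\ad_h(C)=(m_0-h;m_0-h,2,1^{k-1})$ into a counting condition. A curve of degree $m_0-h$ with a point of multiplicity $m_0-h$ at $P_0$ is a union of $m_0-h$ lines through $P_0$; it has multiplicity $\ge2$ at $P_1$ precisely when it contains $\overline{P_0P_1}$ with multiplicity $\ge2$, and it passes through $P_i$ ($2\le i\le k$) precisely when it contains $\overline{P_0P_i}$. Hence, writing $s$ for the number of distinct lines through $P_0$ other than $\overline{P_0P_1}$ that contain at least one of $P_2,\ldots,P_k$, the system is nonempty iff $m_0-h\ge s+2$. Thus $\ad_h(C)=\emptyset$ forces $s\ge m_0-h-1$.

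The heart of the argument is to bound $s$ assuming $h\ge2$, using Lemma \ref{lem:comb1}: all lines pass through a vertex of the triangle $P_0P_1P_2$, whose sides lie in $C$, and every singular point off the vertices has multiplicity $\le3$. An off-vertex point has all its lines through the vertices, at most one per vertex, hence multiplicity $\le3$; so a point of multiplicity $m_2=h+1$ off the vertices forces $h+1\le3$. When $h\ge3$ this is impossible, so $P_2$ is the only point of multiplicity $h+1$, giving $k=2$ and $s=1$; then $s\ge m_0-h-1$ forces $m_0=h+2=m_1$ and $d=3h+2$, whence $h\ge4$ because $d\ge12$, and a direct inspection shows $\ad_{h-1}(C)=(5;3,3,2)$ is nonempty (it contains $2\overline{P_0P_1}+\overline{P_0P_2}+\overline{P_1P_2}$ plus a general line), contradicting \eqref{eq:ad_m}. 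When $h=2$ there are only $m_1-2=2$ and $m_2-2=1$ free lines through $P_1$ and $P_2$; one checks that each off-vertex triple point must lie on the unique free line through $P_2$ and on one of the two free lines through $P_1$, so there are at most two of them and $s\le3$ (counting also $\overline{P_0P_2}$). But $\ad_h(C)=\emptyset$ demands $s\ge m_0-h-1=d-7\ge5$ for $d\ge12$, a contradiction.

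Hence $h=1$ and the type is determined. I expect the delicate step to be the case $h=2$, where off-vertex triple points are genuinely possible: the key is that the small values $m_1,m_2$ leave very few free lines through $P_1$ and $P_2$, which caps the number of such points, and hence $s$, by a constant, while $\ad_h(C)=\emptyset$ forces $s$ to grow linearly in $d$. The remaining case $h\ge3$ cannot be closed with $\ad_h$ alone and requires passing to the lower adjoint $\ad_{h-1}$.
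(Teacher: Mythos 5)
Your proposal is correct and takes essentially the same route as the paper's proof: both translate emptiness of $\ad_h(C)$ into counting lines through $P_0$, both invoke Lemma \ref{lem:comb1} to bound the multiplicities of off-vertex points (which is what forces $h\le 2$ unless $m_0-h=2$), and both kill the borderline case $m_0=h+2$, $d=3h+2$ by exhibiting a member of $\ad_{h-1}(C)=(5;3^2,2,\ldots)$ built from the sides of the triangle $P_0P_1P_2$. The only difference is organizational --- the paper splits cases by the value of $m_0-h$ first, while you split by $h$ --- and your observation that for $h\ge 4$ one automatically has $k=2$, $l=0$, making the quintic system exactly $(5;3^2,2)$, is a minor streamlining of the same argument.
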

\begin{proof}   
We claim that $m_0-h>2$. Otherwise  $m_0-h=2$ hence $d=3h+2$. One has
\[
{\rm ad}_{h-1}(C)=(5; 3^2, 2^{k-1}, 1^{l})
\] 
which has to be empty.
Then either $k\ge3$ or $l\ge1$ (recall that $k\ge2$).
Taking into account the last item of Lemma \ref {lem:comb1},  we see that $h\le 3$, hence $d\le 11$, a contradiction.

Then $m_0-h\ge 3$ and since ${\rm ad}_{h}(C)$ is empty, one has $k\ge 3$.  The last item of Lemma \ref {lem:comb1}
implies  $h\le 2$. If $h=2$, then $m_1=4$, $m_2=3$,  and Lemma \ref {lem:comb1} again yields  $k\in\{3,4\}$. 
Emptiness of  ${\rm ad}_2(C)=(d-6;d-6, 2, 1^{k-1})$ implies $d\le 10$, a contradiction.  If $h=1$ we find the assertion. 
\end{proof}

\begin{prop}\label{prop:class1} If $C$ is a union of lines and $C\cong(d;d-2, 3, 2^ {2(d-3)})$, then $C$ is Cr-contractible.
\end{prop}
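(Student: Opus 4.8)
The plan is to make the triangle $P_0P_1P_2$ completely explicit, apply the standard quadratic transformation based at its three vertices so that the three sides get contracted, and then observe that the resulting curve falls into the already-settled situation of Lemma~\ref{lem:2}. No induction will be needed: a single quadratic reduction does the job.

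First I would pin down the configuration. Since here $m=m_0+m_1+m_2=(d-2)+3+2=d+3$, Lemma~\ref{lem:comb1} applies: the points $P_0,P_1,P_2$ are not collinear, the three sides $\ell_{01},\ell_{02},\ell_{12}$ of the triangle they span are components of $C$, every component of $C$ passes through one of $P_0,P_1,P_2$, and all remaining singular points are nodes. Counting the lines through each vertex then forces the configuration uniquely: besides the three sides, there are exactly $m_0-2=d-4$ lines $A_1,\dots,A_{d-4}$ through $P_0$ only, exactly $m_1-2=1$ line $b$ through $P_1$ only, and $m_2-2=0$ lines through $P_2$ only, which accounts for all $d$ components (and the resulting $2(d-3)$ double points match the type).

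Next I would apply the standard quadratic transformation $\sigma$ based at $P_0,P_1,P_2$; this is a genuine Cremona map because the three points are not collinear, so the net of conics through them is homaloidal. As $C$ contains the three sides, $\sigma$ contracts $\ell_{01},\ell_{02},\ell_{12}$ to points, while each $A_i$ (which meets the base locus of $\sigma$ only at $P_0$) is sent to a line through the image point $P_0'$, and $b$ (meeting it only at $P_1$) is sent to a line $b'$ through $P_1'$. Writing $C=C_0\cup\ell_{01}\cup\ell_{02}\cup\ell_{12}$ with $C_0$ of degree $d-3$ and multiplicities $(d-4,1,0)$ at $(P_0,P_1,P_2)$, the transformation rules give an image $C'=\sigma(C)$ of degree $d-3$ with multiplicities $(d-4,1,0)$ at $(P_0',P_1',P_2')$. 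Hence $C'$ consists of the $d-4$ lines $A_i'$ through $P_0'$ together with the single line $b'$, and $b'$ does not pass through $P_0'$ precisely because the multiplicity there is exactly $d-4$.

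Finally, $C'$ is a union of $d-3$ lines with $m_0(C')=d-4=\deg(C')-1$, that is, it is $\ell=d-4$ lines of the pencil through $P_0'$ plus one residual line: exactly the situation $d'=m_0(C')+1$ of Lemma~\ref{lem:2}. Therefore $C'$ is Cr-contractible, and composing the transformation that contracts $C'$ with $\sigma$ contracts every component of $C$ (the three sides already map to points under $\sigma$), so $C$ is Cr-contractible. I expect the only delicate point to be the combinatorial bookkeeping that forces the configuration, and, after applying $\sigma$, the verification that $b'$ misses $P_0'$ so that we land exactly in the case $\deg(C')=m_0(C')+1$ of Lemma~\ref{lem:2}; beyond this there is no deeper obstacle.
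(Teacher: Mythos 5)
Your combinatorial analysis and the computation of the quadratic transformation are both correct: since $m_0+m_1=d+1>d$ the line $P_0P_1$ is forced to be a component, the configuration is indeed the one you describe, and $\sigma$ maps $C$ to a near-pencil $C'$ of $d-3$ lines ($d-4$ lines $A_i'$ through $P_0'$ plus one line $b'$ off $P_0'$), which is Cr-contractible by Lemma~\ref{lem:2}. The gap is in the final sentence: from ``$\tau$ contracts $C'$'' and ``$\sigma$ already contracts the three sides'' you cannot conclude that $\tau\circ\sigma$ contracts $C$. If a component of $C$ is contracted by $\sigma$ to a point $p$ that is a fundamental point of $\tau$, then on a common resolution the strict transform of that component is (maps onto) the exceptional curve over $p$, and $\tau$ sends that exceptional curve onto its principal curve, which has positive degree: the contracted component \emph{reappears} as a curve. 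This is exactly the circularity phenomenon the paper warns against in \S\ref{S:history}, and it is fatal here, not a removable technicality. Indeed $\sigma$ contracts the side $\ell_{12}$ precisely to $P_0'$, the centre of the pencil of the $d-4\ge 8$ lines $A_i'$; and \emph{any} Cremona transformation contracting $C'$ must have $P_0'$ among its fundamental points, because the curves contracted by a birational morphism of smooth surfaces form an exceptional configuration with simple normal crossings, so no three of them can pass through a common point --- the $A_i'$ can only be separated by blowing up $P_0'$. In particular, the de Jonqui\`eres map $\tau$ produced by Lemma~\ref{lem:2} is centred at $P_0'$ with multiplicity $\mu-1\ge1$, and it maps the exceptional curve over $P_0'$, i.e.\ the strict transform of $\ell_{12}$, onto a curve of degree $\mu-1\ge 1$. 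Hence $\tau\circ\sigma$ maps $C$ onto a curve of positive degree, not to a union of points.

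Repairing your argument would require a contraction of $C'$ whose resolution also contracts the first exceptional divisor over $P_0'$; but that is the same as contracting the strict transform of $C$ on the blow-up at $P_0,P_1,P_2$, i.e.\ the original problem, so the reduction is circular. This is why the paper proceeds differently: it uses a quadratic map based at $P_0$ and at two \emph{nodes} $P_{1,3}, P_{2,4}$, so that the only components contracted are two lines $L_3,L_4$ of the pencil through $P_0$, whose images are points lying on exactly one component of the new curve $C'\cong(d-2;d-4,3,2^{2(d-4)})$ and away from all of its singular points; the inductively constructed contraction of $C'$ can then be chosen with all fundamental points (centres, nodes, or general points) avoiding those two marked points, so the composition genuinely contracts $C$. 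In short: your reduction to Lemma~\ref{lem:2} lands in a situation where the forbidden point is unavoidably a fundamental point of every subsequent contraction, whereas the paper's induction is designed precisely so that this never happens.
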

\begin{proof} The assertion is clear for $d=3,4$ by Lemmas \ref{lem:1} and \ref{lem:2}, so we may assume $d\ge5$ and we proceed by induction on $d$. 
 Consider the two lines $L_1,L_2$ through $P_1$ not passing through $P_0$, and consider two more lines $L_3,L_4$ through $P_0$ and not through $P_1$. Consider the intersection points $P_{1,3}=L_1\cap L_3$, $P_{2,4}=L_2\cap L_4$. Make a quadratic transformation based at $P_0, P_{1,3},P_{2,4}$.  This maps $C$ to a union of lines $C'\cong(d-2;d-4, 3, 2^ {2(d-4)})$: the lines $L_3, L_4$ have been contracted to two points of the transforms of the lines $L_1,L_2$ which do not lie on any other component of $C'$. The assertion follows by induction. \end{proof}

\subsubsection{The subcase $\varepsilon=1$}  We are now either in case (i) or (ii) and in table \ref {table} the last two rows occur.  Thus, either
\begin{align}\label{eq:case1}
 m_0\ge m_1=h+3, \,\, m_2=h+1,\,\, &{\rm hence }\,\,\,  {\rm ad}_h(C)=(m_0-h+1; m_0-h, 3, 1^{k-1}),\text{ or}
\\
\label{eq:case2}
m_0\ge m_1=m_2=h+2 ,\,\, &{\rm hence }\,\,\,  {\rm ad}_h(C)=(m_0-h+1; m_0-h,  2^{k}, 1^{l}).
\end{align}

\begin {lemma}\label {lem:num2} If  \eqref {eq:ad_m} holds, $m=d+3$, $d\ge 11$, $h\ge 1$ and $\varepsilon =1$, then:
\begin{enumerate}[(a)]
\item  either $C\cong(d;d-3, 4, 2^ {3(d-4)})$,
\item  or $C\cong(d;d-3, 3^ 2, 2^ {3(d-4)})$ or $C\cong(d;d-3, 3^ 3, 2^ {3(d-5)})$. 
\end{enumerate}
\end{lemma}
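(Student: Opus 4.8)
The plan is to handle the two alternatives \eqref{eq:case1} and \eqref{eq:case2} separately; in each I would first show that the hypotheses force $h=1$, and then read off the admissible configurations from Lemma \ref{lem:comb1}. The two tools are a lower bound for $\dim(\ad_h(C))$ --- whose non-negativity contradicts \eqref{eq:ad_m} --- together with the combinatorial restrictions of Lemma \ref{lem:comb1} on the singular points lying off $P_0,P_1,P_2$.

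For the dimension bound I would exploit that every curve of $\ad_h(C)=(m_0-h+1;m_0-h,\dots)$ carries at $P_0$ a point of multiplicity $m_0-h$, one less than its degree. In coordinates with $P_0=[0:0:1]$ such curves are exactly those of the form $z\,G(x,y)+H(x,y)$ with $\deg G=m_0-h$ and $\deg H=m_0-h+1$, a linear system of dimension $2(m_0-h)+2$. Since these equations are linear in $z$, the functional $\partial^2_{zz}$ vanishes identically on them, so the triple point at $P_1$ in case \eqref{eq:case1} imposes at most $5$ (instead of $6$) further conditions, while every other assigned base point imposes at most its expected number. This yields $\dim(\ad_h(C))\ge 2(m_0-h)-k-2$ in case \eqref{eq:case1}, and $\dim(\ad_h(C))\ge 2(m_0-h)+2-3k-l$ in case \eqref{eq:case2}. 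Whenever the right-hand side is $\ge 0$ the system is non-empty, contradicting \eqref{eq:ad_m}; hence \eqref{eq:ad_m} forces $k$ --- respectively $k$ and $l$ --- to be large.

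The crux, and the step I expect to be the main obstacle, is to bound these numbers from above and thereby force $h=1$. Assume $h\ge 2$. Since $m_0\ge m_1$ we have $m_0-h\ge 3$ in case \eqref{eq:case1} and $m_0-h\ge 2$ in case \eqref{eq:case2}. In case \eqref{eq:case1}, emptiness then gives $k\ge 2(m_0-h)-1\ge 5$, so at least three points of multiplicity $m_2=h+1$ lie off $P_0,P_1,P_2$. In case \eqref{eq:case2}, one first gets $k=2$ (as $k\ge 3$ would place a point of multiplicity $h+2\ge 4$ off the triangle, against Lemma \ref{lem:comb1}) and then $l\ge 2(m_0-h)-3\ge 1$, so at least one point of multiplicity $m_2-1=h+1$ lies off $P_0,P_1,P_2$. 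In both situations Lemma \ref{lem:comb1} bounds the multiplicity of such a point by $3$, forcing $h+1\le 3$, i.e.\ $h=2$. To conclude I would count the triple points away from the triangle: by Lemma \ref{lem:comb1} each line passes through exactly one of $P_0,P_1,P_2$, and a point $Q\notin\{P_0,P_1,P_2\}$ of multiplicity $3$ carries exactly one line through each vertex, hence is determined by the line through $P_1$ only and the line through $P_2$ only that pass through it. So there are at most $3\cdot 1=3$ such points in case \eqref{eq:case1} and at most $2\cdot 2=4$ in case \eqref{eq:case2}. But the bounds above force their number to be at least $2d-17$, which exceeds $4$ for $d\ge 11$. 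This contradiction excludes $h=2$, so $h=1$ in both cases.

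With $h=1$ the classification follows at once. In case \eqref{eq:case1} one has $m_0=d-3$, $m_1=4$, $m_2=2$, so $P_0$ and $P_1$ are the only points of multiplicity $\ge 3$; counting the remaining intersections by Remark \ref{double} gives $C\cong(d;d-3,4,2^{3(d-4)})$, which is alternative (a). In case \eqref{eq:case2} one has $m_0=d-3$ and $m_1=m_2=3$, and now there is a single line through $P_1$ only and a single line through $P_2$ only, so the vertex count allows at most one further triple point --- the intersection of those two lines, when a line through $P_0$ also passes through it. Hence $C$ has exactly two or three triple points, and Remark \ref{double} gives $C\cong(d;d-3,3^2,2^{3(d-4)})$ or $C\cong(d;d-3,3^3,2^{3(d-5)})$, which is alternative (b).
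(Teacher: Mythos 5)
Your proof is correct and takes essentially the same route as the paper's: the same lower bounds on $k$ and $l$ forced by emptiness of $\ad_h(C)$, the same appeal to Lemma \ref{lem:comb1} to get $h\le 2$, the same count of off-vertex triple points (at most $3\cdot1$ in case \eqref{eq:case1}, at most $2\cdot2$ in case \eqref{eq:case2}) to exclude $h=2$ when $d\ge 11$, and the same identification of the types for $h=1$. Your explicit $\partial^2_{zz}$ dimension count is a valid justification of the bounds $k\ge 2(m_0-h)-1\ge 5$ and $3k+l\ge 2(m_0-h)+3$, which the paper asserts without detail.
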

\begin{proof}  Note that in case \eqref {eq:case1} one has $m_0-h\ge  3$  and in case \eqref {eq:case2} one has  $m_0-h\ge  2$. So,  
to make ${\rm ad}_h(C)$ empty we must have:\\
\begin{inparaenum}[$\bullet$]
\item  $k\ge 5$ in case  \eqref {eq:case1} and\\
\item  either $k\ge 3$, or $k=2$ and $l\ge 1$, in case \eqref  {eq:case2}.  
\end{inparaenum}

In case  \eqref {eq:case1}, the last item of Lemma   \ref {lem:comb1} yields $h\le 2$. If $h=1$, we are in case (a). If $h=2$, then $m_2=3$ implies $k\le 5$. Then emptiness of ${\rm ad}_h(C)=(d-6;d-7,3,1^{k-1})$  requires $d\le 10$, contrary to the assumption. 

In case \eqref {eq:case2},   Lemma   \ref {lem:comb1} yields again $h\le 2$. 
If $h=1$, then  $m_2=3$ implies $k\le 3$ and one has the two cases in (b). 
If $h=2$, then $k=2$ and $l\le 4$ and the emptiness of ${\rm ad}_h(C)=(d-6;d-7,2^2,1^l)$ implies $d\le 10$, contrary to the assumption. \end{proof}

\begin{prop}\label{prop:class2} If $C$ is either  as in (a) of Lemma \ref {lem:num2} and $d\ge 11$ or as in (b) and $d\ge 12$, then $\ad_{2,3}(C)\ne\emptyset$ hence $C$ is not Cr-contractible.
\end{prop}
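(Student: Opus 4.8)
The plan is to establish the single fact $\ad_{2,3}(C)\neq\emptyset$; the non--contractibility is then immediate, since a Cr--contractible curve has $\kod(S,\tilde C)=-\infty$, equivalently $\ad_{n,m}(C)=\emptyset$ for all $m\ge n\ge1$, so a non--vanishing $\ad_{2,3}$ already rules out condition $(b)$, and hence condition $(a)$, from the Introduction. In every case of Lemma \ref{lem:num2} one has $m_0=d-3$, so the formula $\ad_{n,m}(C)=(nd-3m;nm_0-m,\dots)$ gives
\[
\ad_{2,3}(C)=(2d-9;\,2d-9,\,2m_1-3,\,2m_2-3,\dots,1^{\#\text{nodes}}),
\]
a system of plane curves of degree $2d-9$ with a point of multiplicity $2d-9$ at $P_0$. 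A plane curve of degree $D$ with a point of multiplicity $D$ is a union of $D$ lines through that point, so \emph{every} member of $\ad_{2,3}(C)$ is a union of $2d-9$ lines through $P_0$.

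This rigidity is the crux of the argument. Such a union passes through a point $Q\neq P_0$ if and only if the line $\overline{P_0Q}$ is one of its components; hence all the nodes lying on a common line through $P_0$ impose only \emph{one} condition, while the prescribed multiplicity $2m_j-3$ at a point $P_j$ is obtained simply by taking $\overline{P_0P_j}$ with that multiplicity. My strategy is therefore to write down one explicit union of lines through $P_0$ of degree $D_{\mathrm{req}}\le 2d-9$ meeting all the assigned conditions, and then to pad it with $2d-9-D_{\mathrm{req}}$ further lines through $P_0$ up to degree $2d-9$; since adding components only raises the multiplicities at the assigned base points, the padded curve still lies in $\ad_{2,3}(C)$. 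Non--emptiness thus reduces to the single inequality $D_{\mathrm{req}}\le 2d-9$.

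To build the minimal member I first read the configuration off Lemma \ref{lem:comb1} (applicable as $m=d+3$): the $d$ lines pass through one of the non--collinear points $P_0,P_1,P_2$, whose pairwise joins are components. Write $\ell_1=\overline{P_0P_1}$, $\ell_2=\overline{P_0P_2}$ and $\ell_3,\dots,\ell_{d-3}$ for the lines through $P_0$. In case $(a)$ the three lines off $P_0$ all pass through $P_1$, so their mutual intersections fall at $P_1$ and every node lies on some $\ell_i$; the minimal member is $\ell_1^5\,\ell_2\cdots\ell_{d-3}$ (the multiplicity $5=2\cdot4-3$ required at $P_1$ sitting on $\ell_1$), of degree $d+1\le 2d-9$ for $d\ge11$. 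In case $(b)$ with two triple points, the two extra lines $R_1\ni P_1$, $R_2\ni P_2$ meet in a node $R_1\cap R_2$ lying on no $\ell_i$ (otherwise that point would have multiplicity $\ge3$, against the type), forcing one further line $\overline{P_0(R_1\cap R_2)}$ and giving $D_{\mathrm{req}}=d+2$. In case $(b)$ with three triple points $P_1,P_2,P_3$, the requirement that every component meet $P_0,P_1$ or $P_2$ forces $P_3$ off the triangle and forces $\overline{P_0P_3},\overline{P_1P_3},\overline{P_2P_3}$ to be the three lines through $P_3$; then every node again lies on some $\ell_i$ and the minimal member is $\ell_1^3\ell_2^3\ell_3^3\,\ell_4\cdots\ell_{d-3}$, of degree $d+3\le 2d-9$ for $d\ge12$.

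The hard part will be this configuration bookkeeping: deducing from Lemma \ref{lem:comb1} exactly how the lines off $P_0$ and the triple points are arranged, and in particular detecting the single stray node $R_1\cap R_2$ in the two--triple--point case. These arrangements are precisely what produce the required degrees $d+1$, $d+2$, $d+3$, and hence the thresholds $d\ge11$ in case $(a)$ and $d\ge12$ in case $(b)$ (the latter governed by the worse three--triple--point subcase, where $D_{\mathrm{req}}=d+3$ already exceeds $2d-9$ at $d=11$). Once the configuration is pinned down, everything else is the elementary degree count carried out above.
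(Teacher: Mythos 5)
Your proposal is correct and follows essentially the same route as the paper: in each case of Lemma \ref{lem:num2} the paper exhibits members of $\ad_{2,3}(C)=(2d-9;2d-9,\ldots)$ as unions of lines through $P_0$, namely the $d-3$ components of $C$ through $P_0$ with the joins $\overline{P_0P_i}$ taken with multiplicity $5$ (resp.\ $3$), the extra join to the stray node $R_1\cap R_2$ in the two-triple-point subcase, padded by general lines through $P_0$ --- exactly your construction, with the same degrees $d+1$, $d+2$, $d+3$ and the same thresholds on $d$. Your preliminary rigidity remark (any member must be a union of $2d-9$ lines through $P_0$) is just an explicit statement of what underlies the paper's fixed-part/movable-part description, so there is no substantive difference.
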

\begin{proof} In case (a) one has that the fixed part of
 ${\rm ad}_{2,3}(C)=(2d-9;2d-9, 5, 1^ {3(d-4)})$ consists of the $d-3$ components of $C$ through $P_0$, the one joining $P_0$ and $P_1$ with multiplicity 5, and the movable part by $d-10$ general lines through $P_0$.   
 
 In cases (b), one has ${\rm ad}_{2,3}(C)=(2d-9;2d-9, 3^2, 1^ {3(d-4)})$ and ${\rm ad}_{2,3}(C)=(2d-9;2d-9, 3^3, 1^ {3(d-5)})$ respectively. The latter is non--empty:  its fixed part consists of the $d-3$ components of $C$ through $P_0$, the ones joining $P_0$ with $P_1,P_2,P_3$ with multiplicity 3, and the movable part by $d-12$ general lines through $P_0$.   The former is also non--empty:  its fixed part consists of the $d-3$ components of $C$ through $P_0$, the ones joining $P_0$ with $P_1,P_2$  with multiplicity 3, plus the line joining  $P_0$ with the intersection of the two distinct lines in $C$ containing $P_1,P_2$, 
 and the movable part by $d-11$ general lines through $P_0$.    \end{proof}

\subsection{The case $m$ minimal}
Now we treat the different extremal case in which $m=d+2-\varepsilon$, hence $m_1=m_2=h+1$ (first line of \eqref {table}). 

\subsubsection{The subcase $\varepsilon=0$}  \label {sss: epsilon0} We are either in case (i) or (iii) and we have $m=d+2$ and 
\[{\rm ad}_h(C)=(m_0-h; m_0-h, 1^{k}).\]

\begin {lemma}\label {lem:num3} Assume \eqref {eq:ad_m} holds, $m=d+2$, $d\ge 12$, $h\ge 1$, $\varepsilon =0$, then $C\cong(d;d-2, 2^ {2d-3})$.
\end{lemma}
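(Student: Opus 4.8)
The plan is to prove that the hypotheses force $h=1$, from which the statement follows at once: then $m_0=d-2$ and $m_1=m_2=2$, so every singular point other than $P_0$ is a node, and counting pairs of lines gives $\binom d2-\binom{d-2}2=2d-3$ of them, i.e.\ $C\cong(d;d-2,2^{2d-3})$. The starting point is the required vanishing of ${\rm ad}_h(C)$. Since $\varepsilon=0$ and $m=d+2$ we have $m_0=d-2h$ and $m_1=m_2=h+1$, so, setting $n:=m_0-h=d-3h$, one gets ${\rm ad}_h(C)=(n;n,1^k)$, where $P_1,\dots,P_k$ are exactly the points of multiplicity $h+1$. A degree-$n$ curve with an $n$-fold point at $P_0$ is a union of $n$ lines through $P_0$, so $(n;n,1^k)$ is empty precisely when $P_1,\dots,P_k$ lie on more than $n$ distinct lines through $P_0$; denoting by $c$ the number of these lines, emptiness reads $c>n$, hence $k\ge c\ge n+1=d-3h+1$.

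Assume now $h\ge2$, aiming at a contradiction. If $h=2$, the $2h=4$ lines of $C$ not through $P_0$ carry all the triple points $P_i$: each $P_i$ lies on at least two of them, since at most one line through $P_i$ passes through $P_0$. As four lines meet in at most $\binom42=6$ points, this gives $k\le6$, against $k\ge d-5\ge7$. If $h\ge3$, Lemma \ref{lem:comb2} tells us that every singular point other than $P_0,P_1,P_2$ has multiplicity at most $4$. Since $P_3,\dots,P_k$ have multiplicity $h+1$, only two situations survive. Either $h\ge4$, forcing $k\le2$, hence $c\le2$ and $d-3h\le1$, which with $m_0\ge m_1$ yields $d=3h+1$ and $m_0=m_1=m_2=h+1$; this is case (I). Or $h=3$, where necessarily we are in case $(\beta)$ of Lemma \ref{lem:comb2} (cases $(\alpha),(\gamma)$ would bound the remaining multiplicities by $3$ and give $k\le2$, $d\le10$), so $k\le4$, which together with $k\ge d-8$ forces $d=12$, $m_0=6$, $m_1=\cdots=m_4=4$; this is case (II).

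To discard both cases I would exhibit a nonvanishing adjoint, contradicting \eqref{eq:ad_m}; in each case the natural candidate is ${\rm ad}_{h-1}(C)$, of degree $d-3(h-1)$. In case (I) this is $(4;2^3,1^{p})$ with $p\le2$: only multiplicity-$4$ points contribute, and Lemma \ref{lem:comb2} allows at most two of them. Since quartics form a $\PP^{14}$ and the base conditions number at most $3\cdot3+p\le11$, its dimension is at least $3$. In case (II) it is $(6;4,2^4,1^{s})$, where $s$ counts the stray triple points; each $P_i$ ($1\le i\le4$) lies on at least three of the six lines of $C$ missing $P_0$, so the $P_i$ consume at least $4\binom32=12$ of the $\binom62=15$ pairs of these lines, leaving at most $3$ pairs for the stray triple points, whence $s\le3$. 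As sextics form a $\PP^{27}$ and the conditions number at most $\binom52+4\binom32+s=22+s\le25$, the dimension is at least $2$. In both cases the adjoint is nonempty, the sought contradiction, so $h=1$ and $C\cong(d;d-2,2^{2d-3})$.

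The main obstacle I expect is exactly the pair of residual configurations (I) and (II) left over by the crude multiplicity bound of Lemma \ref{lem:comb2}. Ruling them out rests on the robust inequality that a point of multiplicity $\mu$ imposes at most $\binom{\mu+1}2$ conditions on curves of a fixed degree, which makes the adjoint nonemptiness insensitive to the precise position of the singular points, and, for case (II), on the combinatorial estimate $s\le3$ obtained by accounting for the pairs among the six lines of $C$ that avoid $P_0$.
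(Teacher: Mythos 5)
Your proof is correct, and although it draws on the same toolkit as the paper --- reading emptiness of ${\rm ad}_h(C)=(m_0-h;m_0-h,1^k)$ as the condition that $P_1,\ldots,P_k$ span more than $m_0-h$ distinct lines through $P_0$, the multiplicity bounds of Lemma \ref{lem:comb2}, and nonemptiness of a lower-index adjoint via a dimension count --- your case decomposition is genuinely different and, in one spot, more careful than the paper's. The paper first disposes of $m_0-h=1$ (essentially your case (I), killed there too by the quartic $(4;2^3,1^l)$) and then, for $m_0-h\ge2$, runs through cases ($\alpha$), ($\beta$), ($\gamma$) of Lemma \ref{lem:comb2} for each admissible $h$; you instead split according to the value of $h$. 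Your $h=2$ step is more elementary: the bound $k\le\binom{2h}{2}=6$ comes from pure incidence combinatorics of the four lines missing $P_0$, with no appeal to Lemma \ref{lem:comb2} at all. More importantly, your case (II) fills a genuine gap in the paper's argument: in case ($\beta$) with $h=3$ the paper asserts that $k\le4$ together with emptiness of ${\rm ad}_3(C)=(d-9;d-9,1^k)$ ``leads to $d\le11$'', but that emptiness only forces $k\ge d-8$, i.e.\ $d\le12$, and the boundary configuration $d=12$, $m_0=6$, $m_1=\cdots=m_4=4$ genuinely passes the ${\rm ad}_3$ test (there the four lines $P_0P_i$, $i=1,\ldots,4$, are automatically distinct, since a point of multiplicity $4$ lying on a line through two of $P_0,P_1,P_2$ could carry at most three components, so $c=4>3=d-9$). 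Eliminating it requires exactly the sextic argument you supply, ${\rm ad}_2(C)=(6;4,2^4,1^s)$ made nonempty by the count $22+s\le25<27$, where the bound $s\le3$ follows from your bookkeeping of the $\binom{6}{2}=15$ pairs of lines missing $P_0$. So your write-up not only proves the lemma but also repairs an off-by-one slip in the published proof.
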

\begin{proof}  
Assume first  $m_0-h=1$ hence $d=3h+1$. Recall that $k\ge2$.
If $k>2$, then  Lemma \ref {lem:comb2} implies  $h\le 3$, hence $d\le 10$, a contradiction.
If $k=2$, then ${\rm ad}_{h-1}(C)=(4; 2^3, 1^l)$ has to be empty, then we must have $l\ge 6$. Lemma \ref {lem:comb2} implies  $h\le 4$, which can happen only in case $(\beta)$ of Lemma \ref {lem:comb2}, in which case $l\le 2$, a contradiction. If $h\le 3$ then $d\le 10$ and we have a contradiction again. Hence $m_0-h\ge 2$ and ${\rm ad}_{h}(C)=\emptyset$ yields $k>m_0-h\ge 2$.  We discuss separately the various cases in Lemma \ref {lem:comb2}. \medskip

\noindent {\bf Case ($\alpha$).}  We have $h\le 2$. 
If $h=2$, then   $k\le 6$. Since   ${\rm ad}_{2}(C)=(d-6; d-6, 1^k)$
is empty, we have $6\ge k\ge d-5$, a contradiction. If $h=1$ we have the assertion.  \medskip

\noindent {\bf Case ($\beta$).}  We have $h\le 3$. If $h=3$, then ${\rm ad}_{3}(C)=(d-9; d-9, 1^k)$. On the other hand, one  has $k\le 4$, which
leads to $d\le 11$, a contradiction. If $h=2$ then $k\le 6$ and $d\le 11$. So we are left with the case $h=1$, leading to the assertion.\medskip

\noindent {\bf Case ($\gamma$).}  We have  $h\le 2$. 
If $h=2$ one  has $k\le 6$. Emptyness of  ${\rm ad}_{2}(C)$ implies $d\le 11$, a contradiction. 
 If $h=1$ we have the assertion.  \end{proof}

\begin{prop}\label{prop:class2bis} If $C$ is a union of lines and $C\cong(d;d-2, 2^ {2d-3})$,  then $C$ is Cr-contractible.
\end{prop}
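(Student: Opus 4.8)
The plan is to prove the statement by induction on $d$, reducing the given configuration to one of the same type but with $d$ decreased by $2$, in the same spirit as the proof of Proposition \ref{prop:class1}. First I would record the geometry. Since $P_0$ is the only singular point of multiplicity $\ge 3$ and every other singular point is a node, a curve $C\cong(d;d-2,2^{2d-3})$ consists of a pencil of $d-2$ lines through $P_0$ together with two further lines $M_1,M_2$ not passing through $P_0$ and meeting $C$ only in nodes; indeed, by Remark \ref{double} the number of nodes is $\binom{d}{2}-\binom{d-2}{2}=2d-3$, and one checks that this is the unique configuration of this type. The base cases $d\le 3$ are immediate, since at most three lines are always Cr-contractible (for instance a triangle is contracted by a single quadratic transformation based at its three vertices, as for the standard map $\sigma$).

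For the inductive step assume $d\ge 4$. I would choose two lines $L_1,L_2$ of the pencil through $P_0$ and set $P_1=M_1\cap L_1$ and $P_2=M_2\cap L_2$, which are nodes of $C$. The line $\ell=\overline{P_1P_2}$ is never a component of $C$: it cannot be a pencil line, since $P_1\notin L_j$ for $j\ne 1$, nor can it be $M_1$ or $M_2$, which would force a triple point at $P_1$ or $P_2$; in particular $P_0,P_1,P_2$ are not collinear. Now apply the quadratic transformation $\omega$ based at $P_0,P_1,P_2$. The three sides of the fundamental triangle are $L_1=\overline{P_0P_1}$, $L_2=\overline{P_0P_2}$ and $\ell=\overline{P_1P_2}$, so $\omega$ contracts the two components $L_1,L_2$ of $C$ together with $\ell$ (which is not a component), and opens up $P_0,P_1,P_2$ into three lines. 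Hence the transformed curve $C'=\omega_*(C)$ has degree $2d-\bigl((d-2)+2+2\bigr)=d-2$.

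It then remains to identify $C'$. Writing $P_0',P_1',P_2'$ for the images of the opposite sides $\ell,L_2,L_1$, one checks that the remaining $d-4$ pencil lines are sent to $d-4$ lines through $P_0'$ (each meets $\ell$ in a point distinct from $P_0,P_1,P_2$), whereas $M_1,M_2$ are sent to lines $M_1',M_2'$ through $P_1',P_2'$ but not through $P_0'$; moreover $P_1',P_2'$ are smooth points of $C'$, and away from the fundamental points $\omega$ is an isomorphism, so no singular point other than a node is created. Thus $C'\cong(d-2;d-4,2^{2(d-2)-3})$, a configuration of the same type with smaller degree, and by induction $C'$ is Cr-contractible; composing the contracting map of $C'$ with $\omega$ shows that $C$ is Cr-contractible. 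I expect the main obstacle to lie precisely in this last verification: checking that $C'$ acquires no unexpected point of multiplicity $\ge 3$ and that $P_0'$ has multiplicity exactly $d-4$. This is where the careful bookkeeping of which components of $C$ pass through which fundamental points of $\omega$—and the fact that $\ell$ is not a component—is essential.
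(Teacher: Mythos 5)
Your proof is correct and is essentially the paper's own argument: the paper also proceeds by induction on $d$, applying a quadratic transformation based at $P_0$ and two nodes, one on each of the two lines not through $P_0$ (in its notation $P_{1,3}=L_1\cap L_3$ and $P_{2,4}=L_2\cap L_4$, with $L_3,L_4$ in the pencil), which contracts the two pencil lines and yields $C'\cong(d-2;d-4,2^{2d-7})$. Your extra bookkeeping (non-collinearity of the base points, that $\ell$ is not a component, and that $C'$ has exactly the claimed type) just makes explicit the verifications the paper leaves to the reader.
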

\begin{proof}  If $d\le 3$,  the assertion  is trivial. We then argue  by induction on $d$.
Let $L_1,L_2$ be the two lines not passing through $P_0$, and let $L_3, L_4$ be two lines through $P_0$.
Consider the intersection points $P_{1,3}=L_1\cap L_3$, $P_{2,4}=L_2\cap L_4$. Make a quadratic transformation based at $P_0, P_{1,3},P_{2,4}$.  This maps $C$ to a union of lines $C'\cong(d-2;d-4, 2^{2d-7})$: the lines $L_3, L_4$ have been contracted to two points of the transforms of the lines $L_1,L_2$ which do not lie on any other component of $C'$. The assertion follows by induction.
\end{proof}

\subsubsection{The subcase $\varepsilon=1$}   We are either in case (i) or (ii), we have $m=d+1$ and 
\[{\rm ad}_h(C)=(m_0-h+1; m_0-h, 1^{k}).\]

\begin {lemma}\label {lem:num4} Assume \eqref {eq:ad_m} holds, $m=d+1$, $d\ge 12$, $h\ge 1$, $\varepsilon =1$, then $C\cong(d;d-3, 2^ {3(d-2)})$.
\end{lemma}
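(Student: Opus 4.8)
The plan is to reduce everything to proving that $h=1$. Recall that here $\varepsilon=1$ and $m=m_0+m_1+m_2=d+1$ with $m_1=m_2=h+1$, so that $m_0=d-2h-1$. Once we know $h=1$, then $m_1=m_2=2$, and since the multiplicities are ordered every singular point other than $P_0$ has multiplicity exactly $2$, i.e.\ is a node; moreover $m_0=d-3$. As $P_0$ is then the only point of multiplicity $\geq 3$, Remark \ref{double} gives that the number of nodes is $\binom{d}{2}-\binom{d-3}{2}=3(d-2)$, whence $C\cong(d;d-3,2^{3(d-2)})$, as wanted. Since we are assuming $h\geq 1$, it remains to rule out $h\geq 2$.

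The first ingredient is a lower bound on the number $k$ of points of multiplicity $h+1$ (these are exactly $P_1,\ldots,P_k$). The linear system of plane curves of degree $m_0-h+1$ having multiplicity $\geq m_0-h$ at $P_0$ has projective dimension $2(m_0-h)+2$. Since ${\rm ad}_h(C)=(m_0-h+1;m_0-h,1^k)$ is obtained from it by imposing the $k$ simple base points $P_1,\ldots,P_k$, its emptiness (which holds by \eqref{eq:ad_m}) forces $k\geq 2(m_0-h)+3=2d-6h+1$.

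Next I would bound $h$ from above using Lemma \ref{lem:comb3}. Since $m=d+1$, one of the cases $(\alpha')$--$(\delta')$ holds, and in all of them all but at most two components of $C$ pass through one of $P_0,P_1,P_2$. A point $P_i$ of multiplicity $h+1$ with $i\geq 3$ carries $h+1$ concurrent lines; at most two of them can avoid the triangle $P_0P_1P_2$, and at most three can meet it (one through each vertex), so $h+1\leq 5$. Refining per case — no exceptional line in $(\delta')$, one in $(\alpha')$ and $(\gamma')$, two in $(\beta')$ — gives $h\leq 2$ in $(\delta')$, $h\leq 3$ in $(\alpha')$ and $(\gamma')$, and $h\leq 4$ only in $(\beta')$. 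The case $h=4$ is then disposed of quickly: it can occur only in $(\beta')$, where a remaining point of multiplicity $5$ must carry both exceptional lines; as these two lines meet in at most one point there is at most one such point, so $k\leq 3$. But $m_0\geq m_1$ forces $d\geq 3h+2=14$, whence $k\geq 2d-23\geq 5$, a contradiction.

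The remaining and most delicate part is to exclude $h=2$ and $h=3$. The strategy is, as in the proof of Lemma \ref{lem:num3}, to produce an upper bound on $k$ that is \emph{independent of $d$} and contradicts $k\geq 2d-6h+1$ for $d\geq 12$. Such a bound comes from counting the remaining points of multiplicity $h+1$ as incidences of the at most $h+1$ lines through $P_1$ with those through $P_2$, together with the at most two exceptional lines. In the symmetric case $(\delta')$ (which occurs only for $h=2$) every such point joins all three vertices, hence lies on one of at most $(h+1)^2$ intersections, so $k$ is bounded by a small constant, directly contradicting $k\geq 2d-11$. The main obstacle lies in the asymmetric cases $(\alpha')$, $(\gamma')$, $(\beta')$ for the values of $d$ close to $12$: there a remaining point may join only two vertices (using an exceptional line), so the crude incidence count is too weak, and one must also invoke the emptiness of the finer adjoint ${\rm ad}_{h-1}(C)=(m_0-h+2;m_0-h+1,2^k,1^l)$, whose own dimension count constrains $3k+l$, and combine it with a precise incidence analysis of the few lines through $P_1$ and $P_2$, exactly in the spirit of the case-by-case discussion carried out in Lemma \ref{lem:num3}.
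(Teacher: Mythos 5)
Your framework coincides with the paper's own, and the parts you actually execute are correct: the lower bound $k\ge 2(m_0-h)+3=2d-6h+1$ from the emptiness of ${\rm ad}_h(C)=(m_0-h+1;m_0-h,1^{k})$, the per-case upper bounds on $h$ extracted from Lemma \ref{lem:comb3}, the elimination of $h=4$ inside case $(\beta')$, the incidence count that kills case $(\delta')$, and the final bookkeeping (via Remark \ref{double}) that turns $h=1$ into the type $(d;d-3,2^{3(d-2)})$. The problem is what you leave out: the cases $(\alpha')$, $(\beta')$, $(\gamma')$ with $h\in\{2,3\}$ are the bulk of the lemma, and for them you produce no bound at all, only the statement that one ``must invoke'' ${\rm ad}_{h-1}(C)$ and carry out ``a precise incidence analysis \dots in the spirit of Lemma \ref{lem:num3}''. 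A proof whose hardest cases end in an unexecuted program has a genuine gap, and this is where the paper does its real work.

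Moreover, your diagnosis of why those cases are hard is mistaken, and it would send the completion in an unnecessary direction. You claim that the presence of exceptional lines makes ``the crude incidence count too weak''. It does not: there are at most two lines of $C$ avoiding $\{P_0,P_1,P_2\}$, points lying on the sides of the triangle (or on the common line) automatically have small multiplicity, and any remaining point of multiplicity $h+1\ge 3$ must lie on at least two lines chosen among the few lines through $P_1$, the few through $P_2$ and the at most two exceptional ones; distinct such points use disjoint pairs of those lines. This yields, in every one of $(\alpha')$, $(\beta')$, $(\gamma')$, an upper bound on $k$ that is independent of $d$ and small --- the paper gets $k\le 8$ and $k\le 5$ in $(\alpha')$ (for $h=2,3$), $k\le 10$ and $k\le 6$ in $(\beta')$, $k\le 10$ and $k\le 4$ in $(\gamma')$ --- and each of these contradicts your own lower bound $k\ge 2d-6h+1$ (which is $\ge 13$ for $h=2$ and $\ge 7$ for $h=3$ once $d\ge 12$), or even the weak bound $k\ge 5$ in case $(\gamma')$, $h=3$. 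In other words, the paper closes every case with exactly the two ingredients you already have in hand; no adjoint beyond ${\rm ad}_h(C)$ is ever needed in this lemma (${\rm ad}_{h-1}$ appears only in Lemma \ref{lem:num3}, and the explicit reducible conic of the paper's case $(\alpha')$, $h=3$, is only required when the degree drops to $11$). What is missing from your argument is precisely the execution of these counts, and they are not all routine: for instance, in $(\beta')$ with $h=3$ the naive count gives only $k\le 7$, which does not contradict $k\ge 7$ at $d=12$; sharpening it to $k\le 6$ uses that the diagonals of the complete quadrilateral formed by the non-side lines through $P_1$ and $P_2$ are not concurrent. So these cases cannot be waved off, nor outsourced to adjoints that the argument never actually needs.
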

\begin{proof} 
Since $m_0-h+1\ge 2$ and ${\rm ad}_{h}(C)$ is empty, one has $k\ge 5$. Lemma \ref{lem:comb3} applies.

\medskip
\noindent {\bf Case ($\alpha'$).}  One has $h\le 3$. If $h=3$, then $k\le 5$.
Hence $k=5$ and $\ad_2(C)=(2;1^6)$, but the 6 points do lie on a (reducible) conic, a contradiction.
If $h=2$, then ${\rm ad}_h(C)=(d-6; d-7, 1^{k})$ and to make ${\rm ad}_{h}(C)$ empty we need $k>2(d-6)$. On the other hand one sees that
$k\le 8$ hence $d\le 9$, a contradiction. In case $h=1$ we find the assertion.

\medskip
\noindent {\bf Case ($\beta'$).}  One has $h\le 4$. If $h=4$, then $k\le 3$, a contradiction. If $h=3$, then $k\le 6$, hence $k\in\{5,6\}$. Since $\ad_3(C)=(d-9;d-10,1^k)$, we have $k>2(d-9)$, thus $d\le 11$, a contradiction. If $h=2$ then $k\le 10 $, hence $d\le 10$, a contradiction again. Therefore $h=1$ and we find the assertion.

\medskip
\noindent {\bf Case ($\gamma'$).}  One has  $h\le 3$. If $h=3$, then $k\le 4$, a contradiction. If $h=2$, then $k\le 10$, which forces $d\le 10$, a contradiction. hence $h=1$ and we find the assertion.

\medskip
\noindent {\bf Case ($\delta'$).}  One has  $h\le 2$. If $h=2$, then $k\le 6$, which leads to a contradiction as above. Hence $h=1$ and we find the assertion. \end{proof}

\begin{prop}\label{prop:class3} If $C$ is a union of lines and $C\cong(d;d-3, 2^{3(d-2)})$ with $d\ge 9$,  then $\ad_{2,3}(C)\neq \emptyset$, hence $C$ is not Cr-contractible.
\end{prop}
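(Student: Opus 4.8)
The plan is to compute $\ad_{2,3}(C)$ explicitly, exhibit a nonempty member, and then deduce non-contractibility from the implication $(a)\Rightarrow(b)$ together with the fact that $(b)$ is equivalent to the vanishing of all $(n,m)$--adjoint systems. First I would apply the formula ${\rm ad}_{n,m}(C)=(nd-3m;nm_0-m,\ldots,nm_q-m)$ with $n=2$, $m=3$. Since $m_0=d-3$ and the remaining $3(d-2)$ singular points are nodes (for which $2\cdot2-3=1$), this gives
\[
\ad_{2,3}(C)=(2d-9;\,2d-9,\,1^{3(d-2)}).
\]
Because the assigned multiplicity $2d-9$ at $P_0$ equals the degree $2d-9$, every member of this system is a cone with vertex $P_0$, i.e.\ a union of $2d-9$ lines through $P_0$ which in addition passes through all $3(d-2)$ nodes.

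Next I would read off the incidences forced by the type $(d;d-3,2^{3(d-2)})$. Exactly $d-3$ lines $R_1,\ldots,R_{d-3}$ pass through $P_0$, while the remaining three lines $E_1,E_2,E_3$ do not; since $P_0$ is the only singular point of multiplicity $\ge3$, no three of the $d$ lines are concurrent away from $P_0$. Hence each $R_i$ meets $E_1,E_2,E_3$ in three distinct nodes, and $E_1,E_2,E_3$ form a triangle whose three vertices $V_1,V_2,V_3$ are nodes lying on no $R_i$. I would then identify the forced components of any member: to contain a node lying on $R_i$, a line through $P_0$ must equal $R_i$, so all $d-3$ lines $R_i$ are forced; to contain a vertex $V_j$, a line through $P_0$ must be the join $S_j=\overline{P_0V_j}$, and these three joins are pairwise distinct and distinct from every $R_i$, so they are forced as well.

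This yields $d$ distinct forced lines through $P_0$, and together they already cover all $3(d-3)+3=3(d-2)$ nodes. Since $d\ge9$ gives $2d-9\ge d$, I can complete these $d$ lines to a cone of $2d-9$ lines through $P_0$ by adjoining $d-9$ further arbitrary lines through $P_0$, which impose no new conditions; any such cone is a member of $\ad_{2,3}(C)$, so the system is nonempty (indeed of dimension $d-9$). Finally, since $(a)\Rightarrow(b)$ and $(b)$ is equivalent to $\ad_{n,m}(C)=\emptyset$ for all $m\ge n\ge1$, the nonvanishing of $\ad_{2,3}(C)$ shows that $C$ is not Cr--contractible.

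The adjoint computation and the final counting are routine; the only point that needs care — the main obstacle — is justifying that the type $(d;d-3,2^{3(d-2)})$ rigidifies the incidences enough to guarantee that each $R_i$ carries three nodes and that the triangle vertices avoid all the $R_i$, so that exactly $d$ components are forced and no node is left uncovered. This is precisely what makes the threshold $d\ge9$ sharp for nonemptiness of $\ad_{2,3}(C)$.
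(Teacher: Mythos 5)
Your proposal is correct and follows essentially the same route as the paper: the same computation $\ad_{2,3}(C)=(2d-9;\,2d-9,\,1^{3(d-2)})$, the same identification of the forced part (the $d-3$ components of $C$ through $P_0$ plus the three joins of $P_0$ to the vertices of the triangle formed by the other three lines), and the same completion by $d-9$ general lines through $P_0$, with non-contractibility then following from the Cremona invariance of the $(n,m)$--adjoints. The only difference is that you spell out the incidence verifications (no triple concurrences away from $P_0$, distinctness of the joins) that the paper leaves implicit.
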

\begin{proof}
Let $P_1,P_2,P_3$ be the vertices of the triangle formed by the three lines of $C$ not passing through $P_0$.
One has
 ${\rm ad}_{2,3}(C)=(2d-9;2d-9, 1^ {3(d-2)})\ne \emptyset$: its fixed part consists of the $d-3$ components of $C$ through $P_0$ plus the three lines joining $P_0$ with $P_i$, $i=1,2,3$, and the movable part by $d-9$ general lines through $P_0$.  \end{proof}

\subsection{The intermediate case for $m$}
Now we treat the intermediate case in which the length of the interval $[d+2-\varepsilon, d+3]$
 in which $m$ lies is $2$, which forces  
$\varepsilon=1$,  and $m=d+2$ is the intermediate value. Hence we are in the case described in Lemma \ref  {lem:comb2}.
The values of $m_1,m_2$ are given by the second row of table \eqref {table}.  The relevant adjoint is 
\[{\rm ad}_h(C)=(m_0-h+1; m_0-h, 2, 1^{k-1}).\]

\begin {lemma}\label {lem:num5} Assume \eqref {eq:ad_m} holds, $m=d+2$, $d\ge 11$, $h\ge 1$, $\varepsilon =1$,  then $C\cong(d;d-3, 3, 2^ {3(d-3)})$.
\end{lemma}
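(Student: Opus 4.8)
The plan is to deduce from the two standing hypotheses---that $\ad_h(C)$ is empty and that the configuration is one of the three types $(\alpha),(\beta),(\gamma)$ of Lemma~\ref{lem:comb2}---that necessarily $h=1$; the stated type $(d;d-3,3,2^{3(d-3)})$ then follows by a node count. I will work throughout with the adjoint
\[
\ad_h(C)=(m_0-h+1;\,m_0-h,\,2,\,1^{k-1})
\]
recalled above, which must be empty by \eqref{eq:ad_m}.

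I would first extract the numerical consequences of emptiness. Since $m_1=h+2$ and $m_0\ge m_1$, one has $m_0-h\ge2$, so that the degree $m_0-h+1$ of $\ad_h(C)$ is at least $3$ and there is no small exceptional case to isolate (in contrast with Lemma~\ref{lem:num3}). The virtual dimension of $\ad_h(C)$ is $2(m_0-h)-k$, and since the actual dimension is never below the virtual one, emptiness forces $2(m_0-h)-k\le-1$, that is
\[
k\ge 2(m_0-h)+1\ge 5.
\]

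Next I would pin down $h$. As $k\ge5$, at least three points of multiplicity $m_2=h+1$, say $P_3,P_4,P_5$, lie among the singular points other than $P_0,P_1,P_2$. Lemma~\ref{lem:comb2} bounds the multiplicity of such ``remaining'' points by $3$ in cases $(\alpha)$ and $(\gamma)$, and by $4$ in case $(\beta)$, where moreover at most two of them reach multiplicity $4$. If $h\ge3$ then $m_2=h+1\ge4$, so $P_3,P_4,P_5$ would be three remaining points of multiplicity at least $4$: impossible in every case. Hence $h\le2$, and everything reduces to excluding $h=2$.

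The exclusion of $h=2$ is the step I expect to be the main obstacle, and it is where the line structure must be used. Here $m_0=d-5$, $m_1=4$, $m_2=3$, so \emph{exactly five} lines of $C$ avoid $P_0$. Every point $P_i$ with $i\ge2$ has multiplicity $3$ and lies on at most one line through $P_0$, hence on at least two of these five lines; as two lines meet in at most one point, distinct such points consume distinct pairs among the five. The heart of the matter is a short case analysis: grouping the five lines according to which of $P_1$, $P_2$ (and, in case $(\beta)$, the unique component off the vertices) they pass through, one checks that the pairs lying in a single pencil through $P_1$ or $P_2$ meet only at $P_1$ or $P_2$ and create no further point, which in all of $(\alpha),(\beta),(\gamma)$ caps the number of triple points off $P_0,P_1,P_2$ by $6$. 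On the other hand \eqref{eq:ad_m} gives $k\ge 2(m_0-2)+1=2d-13$, hence at least $k-2\ge 2d-15\ge7$ triple points off $P_0,P_1,P_2$ once $d\ge11$---a contradiction. Therefore $h=1$, so $m_0=d-3$, $m_1=3$, and all remaining singular points are nodes; since their number is $\binom{d}{2}-\binom{d-3}{2}-\binom{3}{2}=3(d-3)$, we obtain $C\cong(d;d-3,3,2^{3(d-3)})$.
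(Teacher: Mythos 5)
Your proof is correct and follows essentially the same route as the paper's: the same virtual--dimension count gives $k\ge 5$, the multiplicity caps of Lemma~\ref{lem:comb2} give $h\le 2$, and for $h=2$ the emptiness of $\ad_2(C)=(d-6;d-7,2,1^{k-1})$ forces $k\ge 2d-13$, contradicting the combinatorial bound on triple points off $P_0,P_1,P_2$ when $d\ge 11$. The only difference is organizational: you run the argument uniformly via the five lines missing $P_0$, whereas the paper repeats it separately in each of the cases ($\alpha$), ($\beta$), ($\gamma$).
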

\begin{proof}  As in the proof of Lemma \ref  {lem:num4}, we have $k\ge 5$. 
Again we make a case by case discussion according to the possibilities listed in Lemma \ref {lem:comb2}.

\medskip
\noindent {\bf Case ($\alpha$).}  One has $h\le 2$. If $h=2$, then $k\le 7$. Since $\ad_2(C)=(d-6;d-7,2,1^{k-1})=\emptyset$, it follows that $d\le10$, a contradiction. If $h=1$ we have the assertion. 

\medskip
\noindent {\bf Case ($\beta$).}  One has $h\le 3$. If $h=3$, then $k\le 4$, a contradiction. If $h=2$, then $k\le 7$, which forces $d\le10$ as above, a contradiction. Hence $h=1$ and we find the assertion.

\medskip
\noindent {\bf Case ($\gamma$).}  One has  $h\le 2$. If $h=2$, then $k\le 7$, which forces $d\le 10$, a contradiction. Hence $h=1$ and we find the assertion \end{proof}

\begin{prop}\label{prop:class4} If $C$ is a union of lines and $C\cong(d;d-3, 3, 2^ {3(d-3)})$ with $d\ge 10$,   then $\ad_{2,3}(C)\neq\emptyset$, hence $C$ is not  Cr-contractible.
\end{prop}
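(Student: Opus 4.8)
The plan is to prove that $\ad_{2,3}(C)\ne\emptyset$: since, as recalled in \S\ref{S:notation}, Cr-contractibility of $C$ (condition $(a)$) forces $\kod(S,\tilde C)=-\infty$ (condition $(b)$), which is equivalent to the vanishing of $\ad_{n,m}(C)$ for all $m\ge n\ge1$, the non-vanishing of $\ad_{2,3}(C)$ will immediately give that $C$ is not Cr-contractible, exactly as in Propositions \ref{prop:class2} and \ref{prop:class3}. Using the description $\ad_{n,m}(C)=(nd-3m;nm_0-m,\ldots,nm_q-m)$ recalled in \S\ref{S:notation}, with $(n,m)=(2,3)$, $m_0=d-3$, $m_1=3$ and $3(d-3)$ nodes, I would first record that
\[
\ad_{2,3}(C)=(2d-9;\,2d-9,\,3,\,1^{3(d-3)}).
\]
Since the assigned multiplicity at $P_0$ equals the degree $2d-9$, every curve in this system is a union of $2d-9$ lines through $P_0$, so it is enough to exhibit one such union satisfying the remaining conditions.

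Next I would fix the configuration. We are in the case $m=d+2$ of Lemma \ref{lem:comb2} with $h=1$, so exactly $d-3$ of the lines pass through $P_0$ (call them $M_1,\dots,M_{d-3}$) and the remaining three, $N_1,N_2,N_3$, do not. Since at most one line through $P_0$ can meet $P_1$, one of two situations occurs: in the first, $\overline{P_0P_1}$ is one of the $M_i$ and two of the $N_j$ meet at $P_1$; in the second, $P_1=N_1\cap N_2\cap N_3$ and no $M_i$ passes through $P_1$. In either situation every node of $C$ lies on one of the lines $M_i$, or on the three lines $N_j$; and of the latter kind there are at most two (the vertices of the triangle formed by $N_1,N_2,N_3$ distinct from $P_1$, and none when the $N_j$ are concurrent).

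I would then build a member of $\ad_{2,3}(C)$ as a fixed part plus general lines through $P_0$, as in Proposition \ref{prop:class3}. In the first configuration the fixed part is taken to be the $d-3$ components $M_1,\dots,M_{d-3}$ (which already pass through every node on a line through $P_0$), two further copies of $\overline{P_0P_1}$ (so that this line has total multiplicity $3$, realizing the multiplicity $3$ at $P_1$, since $\overline{P_0P_1}$ is the only line through $P_0$ meeting $P_1$), and the two lines joining $P_0$ to the two nodes on the $N_j$ off $P_1$; its degree is $(d-3)+2+2=d+1$. In the second configuration one takes the $d-3$ components together with three copies of $\overline{P_0P_1}$, of degree $d$, there being no nodes off the lines through $P_0$. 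In both cases the fixed part has degree at most $d+1$, so the movable part can be filled with at least $(2d-9)-(d+1)=d-10$ general lines through $P_0$; each such line avoids $P_1$ and all the assigned nodes and hence imposes no further condition. Since $d\ge10$, we have $d-10\ge0$, the total degree is $2d-9$, every multiplicity and passage condition is met, and $\ad_{2,3}(C)\ne\emptyset$.

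The main point to watch is the bookkeeping of the nodes: those on the lines $M_i$ through $P_0$ are absorbed at no cost by the components of $C$, whereas the at most two nodes lying on the lines $N_j$ but off $P_0$ must be covered by separate lines through $P_0$. It is precisely these two lines, present only in the first configuration, that push the fixed part up to degree $d+1$ and yield the sharp threshold $d\ge10$; the concurrent configuration, with fixed part of degree only $d$, is a fortiori covered.
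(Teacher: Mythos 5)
Your proof is correct and takes essentially the same route as the paper: the same splitting into two configurations according to whether the line $\overline{P_0P_1}$ is a component of $C$, the same fixed parts (of degree $d+1$ in the common-line case and $d$ in the concurrent case, with $\overline{P_0P_1}$ taken with multiplicity $3$ and the two extra nodes covered by lines through $P_0$), completed by general lines through $P_0$, and the same deduction of non-contractibility from $\ad_{2,3}(C)\neq\emptyset$. Your node bookkeeping and degree counts match the paper's construction exactly.
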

\begin{proof} 
There are two configurations of $C$.
Either $C$ contains the line passing through $P_0$ and $P_1$ or it does not.
In the former case, let $P_2, P_3$ the intersection points of the line not passing through $P_0$ and $P_1$ with the two lines through $P_1$ not passing through $P_0$.
In both cases, one has ${\rm ad}_{2,3}(C)=(2d-9;2d-9, 3, 1^ {3(d-3)})\ne \emptyset$.
Indeed, In the former case, its fixed part consists of the $d-3$ components of $C$ through $P_0$, the one joining $P_0$ with $P_1$ with multiplicity 3, plus the two lines joining $P_0$ with $P_2$ and $P_3$, and the movable part by $d-10$ general lines through $P_0$. 
In the latter case, the fixed part consists of the $d-3$ components of $C$ through $P_0$ plus the line joining $P_0$ with $P_1$ with multiplicity 3, and the movable part by $d-9$ general lines through $P_0$.
\end{proof}

We collect the previous results in the following:

\begin{theorem}\label{thm:d>=12}
Let $C$ be a reduced union of $d\ge12$ lines.
If condition \eqref{eq:ad_m} holds, then $C$ has one of the following types
\begin{align}
\label{d;d-2}
&  (d;d),\ 
&& (d;d-1,2^{d-1}),\ 
&& (d;d-2, 3, 2^{2(d-3)}),\ 
&& (d;d-2, 2^{2d-3}),\ 
\\
\label{d-3a}
&  (d;d-3, 4, 2^{3(d-4)}),\ 
&& (d;d-3, 3^3, 2^{3(d-5)}),\ 
&& (d;d-3, 3^ 2, 2^{3(d-4)}),\ 
&& (d;d-3, 3, 2^ {3(d-3)}),\ 
\\
\label{d-3b}
& (d;d-3, 2^{3(d-2)}). \
\end{align}

The types in \eqref{d;d-2} are Cr-contractible, 
while the types in \eqref{d-3a} and \eqref{d-3b} are not Cr-contractible.
If $S\to\PP^2$ is a birational morphism which resolves the singularities of $C$ and denoting by $\tilde C$ the strict transform of $C$ on $S$,
for the types in \eqref{d;d-2} one has $\kod(S,\tilde C)=-\infty$,
while for the types in \eqref{d-3a} and \eqref{d-3b} one has $P_3(S,\tilde C)>0$, thus $\kod(S,\tilde C)\geqslant0$.

In particular, $C$ is Cr-contractible if and only if $\kod(S,\tilde C)=-\infty$.
\end{theorem}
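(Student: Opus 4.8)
The plan is to assemble the statement from the lemmas and propositions proved above, the only genuinely substantive inference being the passage from a nonvanishing $(2,3)$--adjoint to a positive third plurigenus. I would begin with \emph{exhaustiveness} of the type list. If $d-m_0=0$ or $d-m_0=1$, then Lemmas \ref{lem:1} and \ref{lem:2} place $C$ in type $(d;d)$ or $(d;d-1,2^{d-1})$, respectively. Otherwise $h\ge1$, and the discussion in \S\ref{ssec:gen} confines $m=m_0+m_1+m_2$ to the interval $[d+1,d+3]$. A case analysis on the value of $m$ and on the parity $\varepsilon$, feeding the combinatorial Lemmas \ref{lem:comb1}, \ref{lem:comb2} and \ref{lem:comb3} into the numerical Lemmas \ref{lem:num1}, \ref{lem:num2}, \ref{lem:num3}, \ref{lem:num4} and \ref{lem:num5}, produces precisely the types in \eqref{d;d-2}, \eqref{d-3a} and \eqref{d-3b}. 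Since $d\ge12$, all the hypotheses on $d$ in those lemmas (which require only $d\ge9,10,11$ or $12$) are a fortiori met.

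Next I would dispatch the contractibility assertions. The types in \eqref{d;d-2} are Cr--contractible: the first two by Lemmas \ref{lem:1} and \ref{lem:2}, and the remaining two by Propositions \ref{prop:class1} and \ref{prop:class2bis}. The types in \eqref{d-3a} and \eqref{d-3b} are \emph{not} Cr--contractible by Propositions \ref{prop:class2}, \ref{prop:class4} and \ref{prop:class3}, each of which exhibits a nonvanishing $(2,3)$--adjoint. For the Kodaira dimension of the contractible group, the implication $(a)\Rightarrow(b)$ recorded in the Introduction gives $\kod(S,\tilde C)=-\infty$ directly.

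For the non--contractible group I would convert the nonvanishing of $\ad_{2,3}(C)=f_*(|2\tilde C+3K_S|)$ into $P_3(S,\tilde C)>0$. Nonvanishing of the pushforward forces $|2\tilde C+3K_S|\ne\emptyset$ on $S$; choosing an effective $E\in|2\tilde C+3K_S|$, the divisor $E+\tilde C$ is effective and lies in $|3(\tilde C+K_S)|$ because $\tilde C\ge0$. Hence $P_3(S,\tilde C)=h^0(S,\cO_S(3(\tilde C+K_S)))>0$, and therefore $\kod(S,\tilde C)\ge0$ by the definition of log Kodaira dimension. This short effectivity argument is the one step that is more than bookkeeping, and I expect it to be the only real point requiring care.

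Finally, the biconditional follows by comparing the two partitions of the type list: Cr--contractibility and the condition $\kod(S,\tilde C)=-\infty$ both single out exactly the types in \eqref{d;d-2}. It remains to observe that the equivalence persists even when \eqref{eq:ad_m} fails, where the classification does not apply: in that case the contrapositives of $(a)\Rightarrow(c)$ and $(b)\Rightarrow(c)$ show that $C$ is neither Cr--contractible nor of Kodaira dimension $-\infty$, so both sides of the equivalence are simultaneously false. Thus, for any reduced union of $d\ge12$ distinct lines, $C$ is Cr--contractible if and only if $\kod(S,\tilde C)=-\infty$.
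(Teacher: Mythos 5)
Your proposal is correct and takes essentially the same route as the paper: the theorem is assembled from the classification Lemmas \ref{lem:num1}--\ref{lem:num5} (together with Lemmas \ref{lem:1} and \ref{lem:2} for $d-m_0\le1$) and the contractibility statements in Propositions \ref{prop:class1}, \ref{prop:class2bis}, \ref{prop:class2}, \ref{prop:class3} and \ref{prop:class4}, exactly as in the text. The one step the paper states without detail---that $\ad_{2,3}(C)\neq\emptyset$ yields $P_3(S,\tilde C)>0$, by adding the effective divisor $\tilde C$ to a member of $|2\tilde C+3K_S|$ to get a member of $|3(\tilde C+K_S)|$---is precisely the argument you spell out, so your write-up is a faithful (slightly more explicit) version of the paper's proof.
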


\begin{proof}
Types $(d;d)$ and $(d;d-1,2^{d-1})$ are Cr-contractible by Lemmas \ref{lem:1} and \ref{lem:2}.
Types $(d;d-2,3,2^{2(d-3)})$ and $(d;d-2,2^{2d-3})$ are Cr-contractible by Propositions \ref{prop:class1} and \ref{prop:class2bis}.

The fact that $P_3(S,\tilde C)>0$ for the types in \eqref{d-3a}, \eqref{d-3b} 
follows since $\ad_{2,3}(C)\neq \emptyset$ for them by Propositions \ref{prop:class2}, \ref{prop:class3} and \ref{prop:class4}.
\end{proof}

\begin{remark}\label{deg9}  It is easy to check that each of the types in \eqref{d;d-2}, \eqref{d-3a}, \eqref{d-3b}
but $(d;d-3,3,2^{3(d-3)})$ has exactly one configuration, whereas the latter has exactly two  configurations, namely 
\[(d;\{4,5,\ldots,d\},\{1,2,3\})\qquad
\text{ and }
\qquad
 (d;\{4,5,\ldots,d\},\{2,3,4\})
 \]
 (see the proof of Propositon \ref {prop:class4}). By Proposition \ref{prop:class4}, both are not Cr--contractible if $d\ge 10$. 

It is interesting to notice that instead, for $d=9$, the two configurations above
behave quite differently with respect to Cr-contractibility. Indeed,  the latter one is not Cr-contractible, whereas we will see in a moment that the former is instead Cr-contractible.
Both configurations have vanishing adjoint linear systems, but the former one has also $\ad_{n,m}(C)=\emptyset$ for every $m\ge n\ge 1$, whereas the latter one has, as we saw, $\ad_{2,3}(C)\ne\emptyset$.
\end{remark}

\begin{lemma}\label{conic}
Let $C$ be a reduced plane curve. Suppose that there is a Cremona transformation $\gamma$ such that $\gamma(C)=B\cup Z$, where $B$ is either a line or a conic and $Z$ is either $\emptyset$ or a union of points.
Then $C$ is Cr-contractible.
\end{lemma}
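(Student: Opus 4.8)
The plan is to reduce everything to two facts recalled in the Introduction: that a line or a conic can be contracted by a Cremona transformation, and that any finite set of points of $\PP^2$ can be sent to a single point by a Cremona transformation. Thus it suffices to produce a Cremona transformation contracting all of $C$ to a \emph{finite set} of points. The only genuine issue is to guarantee that, while we contract $B$, the points of $Z$ (onto which $\gamma$ has already collapsed some components of $C$) are not blown up again into curves; this is precisely the source of circularity warned about in \S\ref{S:history}.

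First I would dispose of $B$. If $B$ is a line, I take the quadratic transformation $\sigma$ based at two points of $B$ and a third point off $B$, all chosen general, so that none of them lies in the finite set $Z$ (and they are automatically non-collinear, so the net of conics is irreducible). Since $B$ passes through the two base points lying on it, $\sigma$ sends $B$ to a curve of degree $2\cdot 1-1-1=0$, i.e.\ it contracts $B$ to a point; and since the fundamental points of $\sigma$ are off $Z$, the map $\sigma$ is a local isomorphism at each point of $Z$ and sends $Z$ to a finite set of points. If $B$ is a smooth conic, I first apply a quadratic transformation based at three points of $B$, again chosen off $Z$: this sends $B$ to a curve of degree $4-3=1$, a line, and $Z$ to a finite set, thereby reducing to the previous case. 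A reduced singular conic is a pair of distinct lines $\ell_1\cup\ell_2$ meeting at a point $p$; here I contract $\ell_1$ by a quadratic transformation based at two general points of $\ell_1\setminus\{p\}$ and one general point of $\ell_2\setminus\{p\}$, all off $Z$, which keeps $\ell_2$ a line while contracting $\ell_1$, and then I invoke the line case once more.

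Composing $\gamma$ with the transformation $\gamma'$ just constructed, I obtain a Cremona transformation $\gamma'\circ\gamma$ mapping $C$ to a finite set of points: the components of $C$ already contracted by $\gamma$ land on the finite image $\gamma'(Z)$, while the components that $\gamma$ sends onto $B$ are now contracted by $\gamma'$. Finally, since any finite set of points in $\PP^2$ can be mapped to a single point by a Cremona transformation, one further composition contracts $C$ to a single point, so $C$ is Cr-contractible.

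The crux, and the only point demanding care, is the genericity of the base points of $\gamma'$ relative to $Z$. Keeping them off the finite set $Z$ makes $\gamma'$ biregular in a neighbourhood of $Z$, which forbids the already-contracted components of $C$ from being resurrected as curves, exactly the phenomenon that makes naive inductions on the number of components fail in \S\ref{S:history}.
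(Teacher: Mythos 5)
Your proof is correct and is essentially the paper's own argument: compose $\gamma$ with further Cremona maps whose fundamental points are chosen on $B$ and otherwise general (hence off $Z$), so that $B$ is contracted while no point of $Z$ is blown up back into a curve. The only cosmetic difference is in the conic case, where the paper uses a single quartic map with homaloidal net $|4L-2Q_1-2Q_2-2Q_3-Q_4-Q_5-Q_6|$ while you compose two quadratic maps; composing your two quadratics yields precisely a quartic net of that form (double points at three points of $B$, simple points at two further points of $B$ and one general point), so the two arguments coincide.
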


\begin{proof}
Suppose that $B$ is a line. Choose two general points $Q_1,Q_2\in B$ and a general point $Q_3\in\PP^2$. Let $\omega$ be the Cremona quadratic transformation centered at $Q_1,Q_2,Q_3$. Then, $\omega\circ\gamma$ contract $C$ to points.

Suppose that $B$ is a conic. If $B$ is irreducible, choose five general points $Q_1,Q_2,\ldots,Q_5\in B$ and a general point $Q_6\in\PP^2$.
If $B$ is a reducible conic, union of the two lines $R_1$ and $R_2$, choose two general points $Q_1,Q_2\in R_1$, three general points $Q_3,Q_4,Q_5\in R_2$, and a general point $Q_6\in\PP^2$.
In both cases, the Cremona map $\omega$ defined by the homaloidal net $|4L-2Q_1-2Q_2-2Q_3-Q_4-Q_5-Q_6|$ is such that $\omega\circ\gamma$ contracts $C$ to points.
\end{proof}

\begin{prop}
Let $C\cong (9;6,3,2^{18})$ be the configuration
\[
(9;\{1,2,3,4,5,6\},\{1,7,8\}). 
\]
Then $C$ is Cr-contractible.
\end{prop}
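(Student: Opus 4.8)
The plan is to reduce $C$, by a single quadratic transformation, to a union of lines of a type already shown to be Cr-contractible, namely $(d';d'-2,2^{2d'-3})$ as in Proposition \ref{prop:class2bis}. I label the lines so that $P_0=L_1\cap\cdots\cap L_6$ is the sextuple point and $P_1=L_1\cap L_7\cap L_8$ the triple point, with $L_9$ the remaining line. The decisive feature of this configuration is that the join $\overline{P_0P_1}=L_1$ is itself a component of $C$; since for $d\ge 10$ the type $(d;d-3,3,2^{3(d-3)})$ is never Cr-contractible by Proposition \ref{prop:class4}, the low degree $d=9$ is essential here.

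First I would set $D:=L_7\cap L_9$ and check that $P_0,P_1,D$ are distinct and non-collinear: indeed $D\notin L_1=\overline{P_0P_1}$, because $L_7$ avoids $P_0$ and $L_9$ avoids $P_1$. I then apply the quadratic transformation $\sigma$ based at $P_0,P_1,D$. Two of the three fundamental lines, $\overline{P_0P_1}=L_1$ and $\overline{P_1D}=L_7$, are components of $C$ and so are contracted by $\sigma$ to points, while each of the remaining seven lines passes through exactly one base point and is carried to a line. Thus $\sigma(C)$ is again a union of lines, now of degree $7$.

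The core of the argument is to identify the type of $\sigma(C)$. The five lines $L_2,\dots,L_6$ all pass through $P_0$ and cross the contracted line $L_7$ at ordinary points, so their images pass through the single point $O:=\sigma(L_7)$, whereas $\sigma(L_8)$ and $\sigma(L_9)$ avoid $O$. Since the prescribed configuration has no triple point besides $P_0$ and $P_1$, no three image lines become concurrent, so every intersection of $\sigma(C)$ other than $O$ is a node and $\sigma(C)\cong(7;5,2^{11})=(7;7-2,2^{2\cdot 7-3})$. By Proposition \ref{prop:class2bis} this curve is Cr-contractible, and as $\sigma$ maps $C$ to $\sigma(C)$ in the sense of diagram \eqref{eq:diag}, $C$ is Cr-contractible too.

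The step I expect to be most delicate is the multiplicity count at $O$. Because both $L_1$ and $L_7$ are genuine components lying along the fundamental triangle, the naive transformation formula $\deg C-m_{P_1}(C)-m_D(C)=9-3-2=4$ undercounts; the correct value is $5$, obtained by first deleting the two contracted components (equivalently, by observing that precisely the five lines $L_2,\dots,L_6$ meet $L_7$ away from the base points). Pinning down this count correctly---together with the check that no accidental triple point is created---is what guarantees that $\sigma(C)$ lands exactly in the family handled by Proposition \ref{prop:class2bis}, and not in a neighbouring, possibly non-contractible, type.
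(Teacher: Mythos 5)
Your reduction step is correct and nicely computed: the quadratic map $\sigma$ based at $P_0$, $P_1$, $D=L_7\cap L_9$ does contract $L_1$ and $L_7$, and it does transform $C$ into a union of seven lines of type $(7;5,2^{11})$, with the multiplicity count at $O=\sigma(L_7)$ handled properly. The gap is in your last sentence: from ``$\sigma$ maps $C$ to $\sigma(C)$'' and ``$\sigma(C)$ is Cr-contractible'' you conclude that $C$ is Cr-contractible. This implication is not a lemma of the paper and is false as a naive composition argument; the only statement of this kind in \S\ref{S:notation} (the lemma on birationally equivalent pairs) explicitly requires that no component of the curve is contracted, which is exactly what fails here. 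Concretely, the contraction of $(7;5,2^{11})$ furnished by Proposition \ref{prop:class2bis} is a composition of quadratic maps each based at the quintuple point, i.e.\ at $O=\sigma(L_7)$. Composing with $\sigma$, the component $L_7$ is blown back up: on a common resolution its strict transform dominates the exceptional curve of the blow-up of $O$, and the first quadratic map of Proposition \ref{prop:class2bis} sends that exceptional curve to a line (the line joining the two points to which the two contracted lines of $\sigma(C)$ go). So the composite maps $C$ to a curve still containing a line; it does not contract $C$. This is precisely the circularity trap described in \S\ref{S:history}: if one applies a Cremona transformation based at a point where a component was contracted, that component reappears.

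The defect cannot be repaired by simply choosing another contraction of $\sigma(C)$. Any Cremona map $\gamma$ of degree $n$ contracting $\sigma(C)$ must have $O$ among its fundamental points: the five lines through $O$ meet pairwise only at $O$, so if $O$ were not fundamental, each of these lines would have to carry its own disjoint group of fundamental points of total multiplicity $n$, giving $5n\le \sum_i m_i = 3n-3$, absurd. To keep $L_7$ contracted one would then also need $\gamma$ to contract the first exceptional curve over $O$, which forces fundamental points of $\gamma$ infinitely near and proximate to $O$ with multiplicities summing to that of $O$ (proximity equality); the maps in Proposition \ref{prop:class2bis}, based at distinct proper points, never satisfy this. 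Note also that the general principle you invoke --- if some Cremona map sends $C$ onto a Cr-contractible curve then $C$ is Cr-contractible --- would, in view of Kantor's Lemma \ref{lem:Kantor}, settle instances of the open Problem stated in the Introduction, so it certainly cannot be taken for granted. This is why the paper's proof is organized as it is: the de Jonqui\`eres map $\gamma_1$ contracts $L_1,L_4,L_5,L_6$ while keeping $L_7$ and $L_8$ as curves, each subsequent $\gamma_i$ is based at points distinct from the images of previously contracted components, and the residual conic is disposed of by Lemma \ref{conic}, whose proof uses \emph{general} base points exactly to avoid resurrecting contracted components.
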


\begin{proof}
Let $P_0$ be the point of multiplicity 6 and $P_1$ be the triple point.
Denote, as usual, $P_{i,j}=L_i\cap L_j$, for $i\ne j$.

Take the de Jonqui\`eres map $\gamma_1$ defined by the homaloidal net
$|4L-3P_0-P_1-P_{4,7}-P_{5,8}-P_{6,9}-P_{7,9}-P_{8,9}|$.
Note that $\gamma_1$ contracts $L_1,L_4,L_5,L_6$ to points and maps the other 5 lines to a pentagon. Setting $\bar L_i=\gamma_1(L_i)$, $i=2,3,7,8,9$, and $\bar P_{i,j}=\bar L_i\cap \bar L_j$, for $i\ne j$, one sees that $\gamma_1(L_4)=\bar P_{8,9}$, $\gamma_1(L_5)=\bar P_{7,9}$, $\gamma_1(L_6)=\bar P_{7,8}$ and that $\gamma_1(L_1)$ is a point lying on $\bar L_9$, different from the vertices of the pentagon.

Now the quadratic map $\gamma_2$ centered at $\bar P_{2,8}$, $\bar P_{3,7}$, $\bar P_{3,9}$ contracts $\bar L_3$ to a point and maps the other 4 lines to a quadrilateral.
Setting $\tilde L_i=\gamma_2(\bar L_i)$, for $i=2,7,8,9$, and $\tilde P_{i,j}=\tilde L_i\cap\tilde L_j$, $i\ne j$, one sees that $\gamma_2(\bar L_3)=\tilde P_{2,8}$.

Take then the quadratic map $\gamma_3$ centered at $\tilde P_{2,7}$, $\tilde P_{2,9}$ and a general point $Q_8\in \tilde L_8$.
One sees that $\gamma_3$ contracts $\tilde L_2$ and maps the other 3 lines to a triangle.
Setting $\hat L_i=\gamma_3(\tilde L_i)$, $i=7,8,9$, and $\hat P_{i,j}=\hat L_i\cap\hat L_j$, for $i\ne j$, one sees that $\gamma_3(\tilde L_2)$ is a point lying on $\hat L_8$.

Finally, consider the quadratic map $\gamma_4$ centered at $\hat P_{7,8}$, at the infinitely near point to $\hat P_{7,8}$ in the direction of the line $\hat L_7$ and at a general point $Q_9\in\hat L_9$. One sees that $\gamma_4$ contracts the line $\hat L_7$ to a point and maps the other two lines to a reducible conic.
The choice of the fundamental points of the Cremona maps $\gamma_i$, $i=1,2,3,4$, imply that $\gamma_4\circ\gamma_3\circ\gamma_2\circ\gamma_1$ maps $C$ to a conic, hence $C$ is Cr-contractible by Lemma \ref{conic}.
\end{proof}


\begin{thebibliography}{}

\bibitem {Alberich}
\textsc{Alberich-Carrami\~{n}ana, Maria}:
\textit{Geometry of the plane Cremona maps},
Lecture Notes in Mathematics \textbf{1769},
Springer-Verlag, Heidelberg, 2002.

\bibitem {BN} 
\textsc{Brill, Alexander von, and Noether, Max}: 
\textit{Ueber einen Satz aus der Theorie der algebraischen Functionen}, 
Math. Ann. \textbf{6}  (1873), 351--360. 

\bibitem {Cal1} 
\textsc{Calabri, Alberto}:
\textit{On rational and ruled double planes},
Annali di Matematica Pura ed Applicata \textbf{181} (2002), 365--387.

\bibitem {Cal} 
\textsc{Calabri, Alberto}:
\textit{Rivestimenti del piano, Sulla razionalit\`a dei piani doppi e tripli ciclici},
Centro studi Enriques, Ed. Plus, Pisa, 2006. 

\bibitem {CC} 
\textsc{Calabri, Alberto, and Ciliberto, Ciro}:
\textit{Birational classification of curves on rational surfaces},
Nagoya Math. J. \textbf{199} (2010), 43--93.

\bibitem {CC2} 
\textsc{Calabri, Alberto, and Ciliberto, Ciro}:
\textit{On Cremona contractibility}, to appear on Rend. Sem. Mat. Univ. Pol. Torino 71 (2013), 11 pages.

\bibitem {Castelnuovo} 
\textsc{Castelnuovo, Guido}:
\textit{Le trasformazioni generatrici del gruppo cremoniano nel piano},
Atti della Reale Accademia delle Scienze di Torino \textbf{36} (1901), 861--874.
%also in \textit{Memorie scelte}, Zanichelli, Bologna, 1937, pp.~459--471.

\bibitem {Cas2} \textsc{Castelnuovo, Guido}:
 \textit{Ricerche generali sopra i sistemi lineari di curve piane}, Memorie della R. Acc. delle Sc. di Torino (2)  \textbf{42} (1891), 3--43. 

\bibitem {CE} 
\textsc{Castelnuovo, Guido, and Enriques, Federigo}:
\textit{Sulle condizioni di razionalit\`a dei piani doppi},
Rend. Circ. Mat. Palermo \textbf{14} (1900), 290--302.

\bibitem {Cl} \textsc{Clifford, William}: \textit{Analysis of Cremona's transformations} in Math. Papers, Macmillan, London, 1882, 538--542. 

\bibitem{Conforto}
\textsc{Conforto, Fabio}:
\textit{Sui piani doppi razionali},
Rend. Sem. Mat. Univ. Roma (4) \textbf{2} (1938), 156--172.

\bibitem{Coolidge}
\textsc{Coolidge, Julian L.}:
\textit{A treatise on algebraic plane curves},
Dover Publ., New York, 1959.

%\bibitem{Dicks}
%\textsc{Dicks, Duncan}:
%\textit{Birational pairs according to S.\ Iitaka},
%Math. Proc. Cambridge Philos. Soc. \textbf{102} (1987), no. 1, 59--69. 

\bibitem{EC}
\textsc{Enriques, Federigo, and Chisini, Oscar}:
\textit{Lezioni sulla teoria geometrica delle equazioni e delle funzioni
algebriche}, 4 vols., Zanichelli, Bologna, 1915--34.

\bibitem{EConforto}
\textsc{(Enriques, Federigo, and) Conforto, Fabio}:
\textit{Le superficie razionali}, 
Zanichelli, Bologna, 1939--45.

\bibitem{Ferretti} 
\textsc{Ferretti, Giovanni}:
\textit{Sulla riduzione all'ordine minimo dei sistemi lineari di curve piane irriducibili di genere $p$; in particolare per i valori 0, 1, 2 del genere},
Rend. Circ. Mat. Palermo \textbf{16} (1902), 236--279.

%\bibitem{Fu}
%\textsc{Fujita, Takao}:
%\textit{On Zariski problem},
% Proc. Japan Acad. Ser. A Math. Sci. \textbf{55} (1979), 106--110.

\bibitem{Hu}
\textsc{Hudson, Hilda P.}:
\textit{Cremona Transformations in Plane and Space},
Cambridge University Press, 1927.

\bibitem {Ka} \textsc{Kantor, Sigman}:
\textit{Premiers fondements pour une th\'eorie des transformations p\'eriodiques univoques. M\'emoire couronn\'e par l'Acad\`emie des sciences physiqes et math\'ematiques de Naples dans le concours pour 1883}, Acc. Sci. Fis. Mat. e Nat. di Napoli, 1891.  



\bibitem{KT}
\textsc{Kojima, Hideo, and Takahashi, Takeshi}:
\textit{Reducible curves on rational surfaces},
Tokyo J. Math. \textbf{29} (2006), no. 2, 301--317. 
%MR2284974 (2008a:14053)


\bibitem{KumarMurthy}
\textsc{Kumar, N. Mohan, and Murthy, M. Pavaman}:
\textit{Curves with negative self-intersection on rational surfaces},
J. Math. Kyoto Univ., \textbf{22} (1982/83), no.\ 4, 767--777.


\bibitem {Iit} \textsc{Iitaka, Shigeru}: \textit{Characterization of two lines on a projective plane}, in Algebraic Geometry, Springer Berlin Heidelberg, (1983), 432--448. 


\bibitem {Iitaka}
\textsc{Iitaka, Shigeru}:
\textit{Classification of reducible plane curves},
Tokyo J. Math. \textbf{11} (1988), no.\ 2, 363--379. 

%\bibitem {Jung1}
%\textsc{Jung, Giuseppe}:
%\textit{Ricerche sui sistemi lineari di curve algebriche di genere qualunque},
%Annali di Mat. (2), \textbf{15} (1888), 277--312;

\bibitem{Jung2}
\textsc{Jung, Giuseppe}:
\textit{Ricerche sui sistemi lineari di genere qualunque
e sulla loro riduzione all'ordine minimo},
Annali di Mat. (2), \textbf{16} (1889), 291--327.

\bibitem{Marletta}
\textsc{Marletta, Giuseppe}:
\textit{Sulla identit\`a cremoniana di due curve piane},
Rend. Circolo Mat. Palermo, \textbf{24} (1907), no.\ 1, 223--242. 

\bibitem{Marletta2}
\textsc{Marletta, Giuseppe}:
\textit{Sui sistemi aggiunti dei varii indici alle curve piane},
Rend. del R. Ist. Lombardo di Scienze e Lettere, (2) \textbf{43} (1910), 781--804. 

\bibitem {Noe}
\textsc{Noether, Max}:
\textit{ \"Uber die auf Ebenen eideutig abbildbaren algebraischen Fl\"achen}, G\"ottingen Nach., (1870), 1--6. 

\bibitem {Noe2}
\textsc{Noether, Max}:
\textit{Zur Theorie der eindeutigen Ebenentransformationen}, Math. Annalen,  \textbf{5} (1872), 635--639. 

\bibitem{Pompilj}
\textsc{Pompilj, Giuseppe}:
\textit{Sulle trasformazioni cremoniane del piano che posseggono una curva di punti uniti},
Rend. del Sem. Mat. dell'Univ. di Roma (4) \textbf{2} (1937), 3--43.

\bibitem {Ros}
\textsc{Rosanes, Jakob}:
\textit{Ueber rationale Substitutionen, welche eine rationale Umkehrung zulassen}, Crelle J., \textbf{73} (1870), 97--110.

\bibitem {Seg}
\textsc{Segre, Corrado}:
\textit{Un'osservazione relativa alla riducibilit\`a delle trasformazioni Cremoniane e dei sistemi lineari di curve piane per mezzo di trasformazioni quadratiche}, Atti Accad. Sci. Torino, \textbf{36} (1901), 645--651.




\end{thebibliography}
\end{document}